\theoremstyle{plain}
\newtheorem*{theorem*}{Theorem}
\newtheorem{thm}{Theorem}[section]
\newtheorem{pr}[thm]{Proposition}
\newtheorem{cor}[thm]{Corollary}
\newtheorem{lem}[thm]{Lemma}
\newtheorem{theorem}{Theorem}
\theoremstyle{definition}
\def\cleardoublepage{\clearpage\if@twoside \ifodd\c@page\else
	\hbox{}
	\thispagestyle{empty}
	\newpage
	\if@twocolumn\hbox{}\newpage\fi\fi\fi}
\DeclareMathOperator{\Aut}{Aut}
\DeclareMathOperator{\St}{St}
\DeclareMathOperator{\Rist}{Rist}
\DeclareMathOperator{\h}{hdim}
\def\ZZ{\mathbb{Z}}
\def\N{\mathbb{N}}
\numberwithin{equation}{section}
\keywords{Groups acting on rooted trees, weakly branch groups, congruence subgroup properties, Hausdorff dimension, maximal subgroups}
\subjclass[2010]{Primary 20E08; Secondary 20E18, 28A78; 20E28}
\begin{document}
	
	\title[$p$-Basilica groups]{$p$-Basilica groups}

\author[E. Di Domenico]{Elena Di Domenico}
\address{Elena Di Domenico: Dipartimento di Matematica, Universit\`a degli Studi di
Trento, 38123 Trento  Italy; Department of Mathematics, University of the Basque Country UPV/EHU, 48080 Bilbao, Spain}
 \email{elena.didomenico@unitn.it}
 
\author[G.\,A. Fern\'{a}ndez-Alcober]{Gustavo\,A. Fern\'{a}ndez-Alcober}
 
 \address{Gustavo\,A. Fern\'{a}ndez-Alcober: Department of Mathematics, University of the Basque Country UPV/EHU, 48080 Bilbao, Spain}
 \email{gustavo.fernandez@ehu.eus}

\author[M. Noce]{Marialaura Noce}
\address{Marialaura Noce: Mathematisches Institut, Georg-August Universit\"{a}t zu G\"{o}ttingen, Bunsenstr. 3-5, 37073 G\"{o}ttingen, Germany}
\email{mnoce@unisa.it}

 \author[A. Thillaisundaram]{Anitha Thillaisundaram}
 
 \address{Anitha Thillaisundaram: School of Mathematics and Physics, University of Lincoln,
 	Brayford Pool, Lincoln LN6 7TS, United Kingdom}
 \email{anitha.t@cantab.net}

\thanks{The first three authors are supported by the Spanish Government grant MTM2017-86802-P, partly with FEDER funds, and by the Basque Government grant IT974-16.
The first and third authors are also partially supported by the National Group for Algebraic and Geometric Structures, and their Applications (GNSAGA -- INdAM). The first author  acknowledges support from the Department of Mathematics of the University of Trento. The third author acknowledges financial support from a London Mathematical Society Joint Research Groups in the UK (Scheme 3) grant. The fourth author acknowledges support from EPSRC, grant EP/T005068/1.}

\begin{abstract}
We consider a generalisation of the Basilica group to all odd primes: the $p$-Basilica groups acting on the $p$-adic tree. We show that the $p$-Basilica groups have the $p$-congruence subgroup property but not the congruence subgroup property nor the weak congruence subgroup property. This provides the first examples of weakly branch groups with such properties.
In addition, the $p$-Basilica groups give the first examples of weakly branch, but not branch, groups which are super strongly fractal. 
We compute the orders of the congruence quotients of these groups, which enable us to determine the Hausdorff dimensions of the $p$-Basilica groups.
Lastly, we show that the $p$-Basilica groups do not possess maximal subgroups of infinite
index and that they have infinitely many non-normal maximal subgroups.
\end{abstract}

\maketitle

\section{Introduction}

Let $p$ be a prime and let $T$ be the $p$-adic tree. Groups acting on $p$-adic trees have been well studied over the past decades, owing to their nice structure, their importance in the theory of just infinite groups, and the fact that many such groups have exotic properties; see~\cite{Handbook} for a good introduction. Lots of the interesting examples of such groups share one common property: that of being \emph{branch} or \emph{weakly branch}, where branchness is a measure of how close the structure of the group resembles the structure of the full automorphism group of the tree~$T$; see Section~2 for precise definitions.

An example of such a group is the Basilica group. This group acts on the binary tree, is weakly branch but not branch, is torsion-free, is of exponential growth, and is not subexponentially amenable~\cite{Zuk}. The Basilica group is generated by two elements, $a$ and $b$, which are recursively defined as follows:
\[
a=(1,b) \quad\text{and}\quad b=(1,a)\sigma
\]
where $\sigma$ is the cyclic permutation $(1\,2)$, which swaps the two maximal subtrees, and the notation $(x,y)$ indicates the independent actions on the respective maximal subtrees, for $x$ and $y$ automorphisms of the binary tree.

In this paper, we are interested in a natural generalisation of the Basilica group, which we call the \emph{$p$-Basilica group}, that acts on the $p$-adic tree, for $p$ any prime. Such a group  $G$ is generated by the following $2$ elements:
\[
a=(1,\overset{p-1}\ldots,1,b) \quad\text{and}\quad b=(1,\overset{p-1}\ldots,1,a)\sigma
\]
where $\sigma$ is the cyclic permutation $(1\,2\,\cdots \, p)$. Clearly the 2-Basilica group coincides with the Basilica group. This generalisation of the Basilica group mirrors Sidki and Silva's generalisation of the Brunner-Sidki-Vieira group; see~\cite{SidkiSilva} and~\cite{BSV}. A different generalisation of the Basilica group to the $p$-adic tree, with $p$ generators, was first investigated by Sasse in her Master thesis~\cite{Sasse}, and Sasse's work has been recently developed further by Petschick and Rajeev~\cite{PR}. As seen below, our $2$-generator $p$-Basilica groups,  also known in~\cite{PR} as Basilica groups of level~$2$, are more similar to the Basilica group. The generalisations of the Basilica groups considered by Sasse, Petschick and Rajeev include the Basilica groups of levels strictly greater than 2, and they differ more significantly from the Basilica group.

We prove the following in Sections~\ref{sec:first-properties} and~\ref{sec:commutator}; see Theorem~\ref{thm:G/G'}, Theorem~\ref{thm:weakly-branch} and Lemma~\ref{thm:key}.

\begin{theorem}\label{thm:structural-results}
Let $G$ be a $p$-Basilica group, for $p$ a prime. Then $G$ is not branch, but it is weakly regular branch over~$G'$.
Furthermore:
\begin{enumerate}
    \item $G/G'\cong \mathbb{Z} \times \mathbb{Z}$.
    \item $G'/\gamma_3(G)\cong \mathbb{Z}$.
    \item  $G'/G''\cong \mathbb{Z}^{2p-1}$.
    \item  $\gamma_3(G)/G''\cong \mathbb{Z}^{2p-2}$.
\end{enumerate}
\end{theorem}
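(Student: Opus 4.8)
The plan is to extract everything from the first-level decomposition $\psi\colon\St_G(1)\to G\times\cdots\times G$ (self-similarity holds, since the sections of $a,b$ lie in $\{1,a,b\}$). As $a\in\St_G(1)$ while $b$ induces $\sigma$ at the root, $[G:\St_G(1)]=p$ with transversal $\{1,b,\dots,b^{p-1}\}$, and a Reidemeister--Schreier computation gives $\St_G(1)=\langle a,\ b^p,\ b^iab^{-i}\ (1\le i\le p-1)\rangle$. One computes directly
\[
\psi(a)=(1,\dots,1,b),\quad \psi(b^p)=(a,\dots,a),\quad \psi(b^iab^{-i})=(1,\dots,1,\underset{i}{aba^{-1}},1,\dots,1),
\]
and, writing $c=[a,b]$, $\psi(c)=(1,\dots,1,a^{-1}ba,b^{-1})$ with the two nontrivial entries in positions $p-1,p$. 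These formulas yield the conservation identities $\sum_j\epsilon_a(g_j)=\epsilon_b(g)$ and $\sum_j\epsilon_b(g_j)=\epsilon_a(g)$ for $g\in\St_G(1)$, where $\epsilon_a,\epsilon_b$ denote exponent sums. For (1), $G$ is $2$-generated, so $G/G'$ is a quotient of $\mathbb{Z}^2$; I would produce the reverse surjection $G\to\mathbb{Z}^2$, $g\mapsto(\epsilon_a(g),\epsilon_b(g))$. Its well-definedness (equivalently $\ker(F_2\twoheadrightarrow G)\subseteq F_2'$) is the only real content here, and I expect to obtain it from the explicit computation of the congruence quotients $G/\St_G(n)G'$ carried out for the Hausdorff-dimension results, which simultaneously rules out torsion and forces $G/G'\cong\mathbb{Z}^2$.

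Next I would establish that $G$ is weakly regular branch over $G'$, from which weak branchness and the key containments follow. Using the images above, the coordinatewise commutator $[\,bab^{-1},\,b^p\,]$ lies in $G'\cap\St_G(1)$ and satisfies $\psi\big([bab^{-1},b^p]\big)=\big([aba^{-1},a],1,\dots,1\big)$, whose single nontrivial entry $[aba^{-1},a]$ is congruent to $c^{-1}$ modulo $\gamma_3(G)$ and hence normally generates $G'$ modulo $G''$. Conjugating by powers of $b$ spreads this entry over all coordinates, giving $G'\times\cdots\times G'\le\psi(G')$ (after checking one loses nothing modulo $G''$), i.e.\ weak regular branchness over $G'$. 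That $G$ is not branch I would deduce by showing the rigid stabiliser $\Rist_G(1)$ has infinite index: its coordinate subgroups $R_i=\{h:(1,\dots,\underset{i}{h},\dots,1)\in G\}$ satisfy $R_i=\langle G',b\rangle$, so the diagonal element $\psi(b^p)=(a,\dots,a)$ has infinite order modulo $\psi(\Rist_G(1))=R_1\times\cdots\times R_p$ because $\bar a$ has infinite order in $G/\langle G',b\rangle\cong\mathbb{Z}$; weak branchness ($\Rist_G(v)\neq 1$ for all $v$) is immediate from weak regular branchness over $G'\neq1$.

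The heart of the theorem is (3), from which (4) and (2) follow. I would regard $M:=G'/G''=(G')^{\mathrm{ab}}$ as a module over $\Lambda=\mathbb{Z}[G/G']=\mathbb{Z}[s^{\pm1},t^{\pm1}]$ ($s,t$ acting as conjugation by $a,b$), cyclically generated by $\overline{c}$; thus $M\cong\Lambda/I$ and the task is to identify $I$. The abelianised section map $\overline{\psi}\colon M\to(G/G')^p$ already supplies relations: since both entries $a^{-1}ba\equiv b$ and $b^{-1}$ of $\psi(c)$ are powers of $b$ modulo $G'$, the images $\overline{\psi}(c^{b^k})$ are the differences $e_{\sigma^k(p-1)}-e_{\sigma^k(p)}$ inside the $t$-part $\mathbb{Z}^p$, spanning its rank-$(p-1)$ augmentation sublattice, while $\overline{\psi}(c^{a})=\overline{\psi}(c)$; iterating $\psi$ together with the branch containment $G'\times\cdots\times G'\le\psi(G')$ turns this into a self-referential description of $M$ that I expect to collapse $\Lambda/I$ to a free abelian group of rank exactly $2p-1$. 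Granting (3), the submodule $\gamma_3(G)/G''=(s-1,t-1)M$ is the augmentation part and has rank $2p-2$, giving (4); and $G'/\gamma_3(G)=M/(s-1,t-1)M$ is cyclic, generated by $\overline c$, of rank $2p-1-(2p-2)=1$, hence $\cong\mathbb{Z}$, giving (2).

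The main obstacle is the lower bound in (3): proving that the relations coming from the first level are \emph{all} the relations, i.e.\ that the $2p-1$ candidate generators are genuinely $\mathbb{Z}$-independent in $G'/G''$ and that no further collapse is hidden inside $G''$. Since the level-one map $\overline{\psi}$ detects only rank $p-1$, this independence must be certified by the self-similar structure: probing the sections of elements of $\ker\overline{\psi}$ at deeper levels and using the injectivity of $\psi$ together with weak regular branchness over $G'$ to account for the remaining rank $p$. Controlling $G''$ from below in this way — equivalently, pinning down the annihilator ideal $I$ precisely — is where the key lemma does its work and is the step I would expect to require the most care.
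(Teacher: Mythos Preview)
Your plan for (1), the branch structure, and the ``not branch'' argument are essentially correct and close to the paper's (the paper uses $[b^p,a]$ rather than $[bab^{-1},b^p]$, and proves $G/G'\cong\mathbb{Z}^2$ by a short minimal-counterexample argument independent of the congruence quotients --- beware that the congruence-quotient computation in the paper already \emph{uses} $G/G'\cong\mathbb{Z}^2$, so your proposed route for (1) risks circularity).

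The genuine gap is in your strategy for (3), and it stems from attacking the items in the wrong order. You propose to compute $G'/G''$ first as a $\mathbb{Z}[G/G']$-module and then read off (2) and (4), but you have no concrete mechanism for the lower bound on the rank --- ``iterating $\psi$ \dots\ I expect to collapse $\Lambda/I$'' is a hope, not an argument, and the abelianised map $\overline{\psi}\colon G'/G''\to (G/G')^p$ you describe can only ever detect rank $p-1$. The paper's approach is the reverse: it first pins down the images of the relevant subgroups under $\psi$ \emph{exactly}, obtaining not only $\psi(G')=(G'\times\cdots\times G')\rtimes C$ (with $C$ free abelian of rank $p-1$) but crucially
\[
\psi(G'')=\gamma_3(G)\times\cdots\times\gamma_3(G),
\]
a fact you never isolate. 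This identity reduces (3) to
\[
G'/G''\;\cong\;\psi(G')/\psi(G'')\;\cong\;\bigl(G'/\gamma_3(G)\bigr)^p\times C,
\]
so (3) follows immediately from (2), not the other way round. The paper then proves (2) directly: $G'/\gamma_3(G)$ is cyclic on $[a,b]$, and an induction on the level $n$ (using the description of $\psi(\gamma_3(G))$ and of $\psi(\St_{G'}(n))$) shows that the order of $[a,b]$ modulo $\gamma_3(G)\St_{G'}(n)$ is at least $p^{\lceil (n-1)/2\rceil}$, hence infinite. This level-by-level estimate is exactly the ``key lemma'' you sense is needed; your module-theoretic framework would have to reproduce its content, and as written it does not.
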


Additionally in Sections~\ref{sec:first-properties} and~\ref{sec:commutator} we establish other basic properties of the $p$-Basilica groups~$G$, such as being  torsion-free (Theorem~\ref{thm:torsion-free}), contracting (Theorem~\ref{thm: contracting}), just non-solvable (Corollary~\ref{cor: Just non solvable}), and having its automorphism group $\Aut(G)$ equal the normaliser of~$G$ in $\Aut(T)$ (Corollary~\ref{cor:Aut(G)}).
We also show that the groups are super strongly fractal (Theorem~\ref{thm:super-strongly-fractal}), which means for any $n\in\mathbb{N}$, the projection of the $n$th level stabiliser $\St_G(n)$ at any $n$th level vertex is the whole of~$G$; see Section~2 for the precise definition. This yields the first examples of finitely generated weakly branch, but not branch, groups that are super strongly fractal.

Now, one of the main properties concerning the $p$-Basilica groups that we investigate is the congruence  subgroup property, where we say that $\mathcal{G}\le \Aut(T)$ has the \emph{congruence subgroup property} if every finite-index subgroup of~$\mathcal{G}$ contains a level stabiliser
$\St_{\mathcal{G}}(n)$ for some $n\in\mathbb{N}$.
Equivalently, the group $\mathcal{G}$ has the congruence subgroup property if the profinite completion of~$\mathcal{G}$ equals its closure in $\text{Aut}(T)$.

Garrido and Uria-Albizuri~\cite{pcongruence} introduced a weaker version of the congruence subgroup property: a group $\mathcal{G}\le \Aut(T)$ is said to have the \emph{$p$-congruence subgroup property} if every normal subgroup of $p$-power index contains some level stabiliser.
In~\cite{pcongruence}, examples of weakly branch, but not branch, groups with the $p$-congruence subgroup property and not the congruence subgroup property were provided.
For $p$ odd, their examples were the Grigorchuk-Gupta-Sidki (GGS-)groups defined by the constant vector, and for $p=2$, their example was the Basilica group.
In Section~\ref{sec:CSP}, we extend this result to $p$-Basilica groups, for all odd primes $p$:

\begin{theorem}
\label{thm:CSP}
Let $G$ be a $p$-Basilica group, for $p$ a prime. Then $G$ has the $p$-congruence subgroup property but not the congruence subgroup property nor the weak congruence subgroup property.
\end{theorem}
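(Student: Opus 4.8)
The plan is to treat the three assertions through a single structural observation: since $G\le\Aut(T)$ is generated by automorphisms whose only rooted permutation is the $p$-cycle $\sigma$, the image of $G$ in $\Aut(T)/\St(n)$ lies in the iterated wreath product of $\langle\sigma\rangle\cong C_p$, so each congruence quotient $G/\St_G(n)$ is a finite $p$-group. Consequently the congruence completion $\overline{G}=\varprojlim_n G/\St_G(n)$ is a pro-$p$ group. This reframes everything: the $p$-congruence subgroup property becomes the statement that $\overline{G}$ coincides with the pro-$p$ completion of $G$, equivalently that every normal subgroup $N\trianglelefteq G$ of $p$-power index contains some $\St_G(n)$; whereas the failure of the (weak) congruence subgroup property will follow from exhibiting finite quotients of $G$ that are not $p$-groups.

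For the $p$-congruence subgroup property I would argue by induction on the $p$-power $[G:N]$. Replacing $N$ by $N\cap\St_G(1)$ (still normal of $p$-power index, and it suffices to treat this case) I may assume $N\le\St_G(1)$ and pass through $\psi\colon\St_G(1)\hookrightarrow G\times\cdots\times G$ ($p$ copies). Since $G$ is super strongly fractal, each coordinate projection $N_i:=\pi_i\psi(N)$ is a normal subgroup of $G$, and $[G:N_i]$ divides $[\St_G(1):N]$ and is therefore a strictly smaller $p$-power; the inductive hypothesis then yields $\St_G(m)\le N_i$ for a common $m$. The remaining, genuinely hard, step is to promote this subdirect information to a full level stabiliser inside $N$. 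Here I would use that $G$ is weakly regular branch over $G'$, so $\psi(G'\cap\St_G(1))\supseteq G'\times\cdots\times G'$: commuting elements of $N$ with the preimages $\psi^{-1}(1,\dots,x,\dots,1)$, $x\in G'$, which lie in $G$, and invoking $N\trianglelefteq G$, forces
\[
\psi(N)\supseteq [N_1,G']\times\cdots\times[N_p,G'].
\]
The main obstacle is precisely that $[N_i,G']\le\gamma_3(G)$ is of infinite index and hence far from a level stabiliser, so the abstract branch property alone cannot close the induction. One must feed in the explicit recursion for $a$ and $b$ together with the description of $G'$ and its lower central quotients from Theorem~\ref{thm:structural-results}, and I expect this is where the exact computation of the orders of the congruence quotients enters: matching these against the orders in the pro-$p$ completion certifies that no proper $p$-power-index subgroup escapes the congruence topology.

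The two negative statements then fall out cleanly from the pro-$p$ nature of $\overline{G}$. Because $G/G'\cong\mathbb{Z}\times\mathbb{Z}$, the group $G$ has finite quotients of order prime to $p$, for instance via $G\twoheadrightarrow\mathbb{Z}^2\twoheadrightarrow C_q$ with $q\neq p$; writing $\rho$ for such a map, the index $[C_q:\rho(\St_G(n))]$ is simultaneously a power of $p$ (being a quotient of $[G:\St_G(n)]$) and a divisor of $q$, hence $\rho(\St_G(n))=C_q$ and $\St_G(n)\not\le\ker\rho$ for every $n$. Thus the finite-index subgroup $\ker\rho$ contains no level stabiliser, so $G$ fails the congruence subgroup property.

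For the failure of the weak congruence subgroup property, recall that this property asks every finite-index subgroup to contain $\St_G(n)'$ for some $n$; I would run the previous argument one derived level deeper. Since $\gamma_3(G)/G''\cong\mathbb{Z}^{2p-2}\neq 0$, the conjugation action of $G$ on $G'/G''\cong\mathbb{Z}^{2p-1}$ is nontrivial, so for a suitable prime $q\neq p$ it remains nontrivial modulo $q$ and produces a surjection $\pi\colon G\twoheadrightarrow Q$ onto a finite \emph{non-abelian} $q$-group. Exactly as before, $[Q:\pi(\St_G(n))]$ is a power of $p$ dividing the $q$-power $|Q|$, so $\pi(\St_G(n))=Q$ and therefore $\pi(\St_G(n)')=Q'\neq 1$ for every $n$. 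Hence the finite-index subgroup $\ker\pi$ contains no $\St_G(n)'$, and $G$ fails the weak congruence subgroup property; equivalently, the congruence kernel $\ker(\widehat{G}\to\overline{G})$ surjects onto the non-abelian pro-$q$ completion $G_{\widehat q}$ and is thus non-abelian.
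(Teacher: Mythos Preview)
Your argument for the failure of the congruence subgroup property is correct and matches the paper's one-line remark.

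For the $p$-CSP, your sketch has a genuine gap, which you yourself flag: after reducing to $N\le\St_G(1)$ and obtaining $\St_G(m)\le N_i$ for the coordinate projections $N_i$, you note that commutators only force $[N_1,G']\times\cdots\times[N_p,G']\subseteq\psi(N)$ with $[N_i,G']\le\gamma_3(G)$ of infinite index, and then defer to unspecified order computations. This step does not close by order-counting alone, and the paper does not attempt any such global induction on $[G:N]$. Instead it proves a layered criterion (Lemma~\ref{lem:p-CSP weakly}, a mild extension of a result of Garrido and Uria-Albizuri): for $G$ weakly regular branch over $K$ and $K'\le N\le K$ with $L=\psi^{-1}(N\times\cdots\times N)$, if each of $G/N$, $N/L$ and $N/K'$ has the $p$-CSP then so does $G$. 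Taking $K=N=G'$, the three quotients $G/G'$, $G'/L$, $L/G''$ are free abelian of ranks $2$, $p-1$, $p$ respectively, and a separate lemma (Lemma~\ref{lem:p-CSP A/B}) shows that a free abelian quotient $\mathcal G/N$ has the $p$-CSP as soon as the cyclic invariants of $\mathcal G/N\St_{\mathcal G}(n)$ tend to infinity with $n$. These invariants are supplied by Theorem~\ref{thm:n-order}, Lemma~\ref{lem:psi stG'} and Lemma~\ref{lem:order comm}; the structural input enters through these abelian layers rather than through a direct descent on $N$.

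For the weak CSP, your strategy---produce a surjection $\pi\colon G\to Q$ onto a finite non-abelian $q$-group with $q\ne p$, then use $\pi(\St_G(n))=Q$ to deduce $\pi(\St_G(n)')=Q'\ne 1$---is correct and in fact cleaner than the paper's explicit element chase. Your construction of $Q$ needs a small fix, though: a nontrivial action of $G$ on $(G'/G'')\otimes\mathbb{Z}/q\mathbb{Z}$ does not by itself yield a $q$-group quotient, since any resulting extension still surjects onto the infinite group $G/G'$. The correct source is $G/\gamma_3(G)\cong H_3(\mathbb{Z})$ (Corollary~\ref{cor:Heisenberg}, available from Theorem~\ref{thm:structural-results}): reduction modulo $q$ gives $\pi\colon G\twoheadrightarrow H_3(\mathbb{Z}/q\mathbb{Z})$, a non-abelian group of order $q^3$, and your argument then goes through verbatim. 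The paper uses this same quotient but instead exhibits the specific commutator $[a^{p^{(n-1)/2}},b^{p^{(n+1)/2}}]\in\St_G(n)'$ and checks by hand, via the Heisenberg relations, that it lies outside $\ker\pi$; your observation that $\pi(\St_G(n)')=Q'$ makes that computation unnecessary.
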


\noindent
We recall that a group $\mathcal{G}\le \Aut(T)$ has the \emph{weak congruence subgroup property} if every finite-index subgroup contains the derived subgroup of some level stabiliser; cf.~\cite{Segal}.
The $p$-Basilica groups are the first examples of  
weakly regular branch groups with the $p$-congruence subgroup property but not the weak congruence subgroup property.

In Subsection~\ref{sec:Hausdorff}, we  compute the orders of the congruence quotients $G/\St_G(n)$ for all $n\in\mathbb{N}$, for a $p$-Basilica group~$G$. This enables us to compute the Hausdorff dimension of the closure of the $p$-Basilica group~$G$  in the group~$\Gamma$ of $p$-adic automorphisms of~$T$. We recall that 
\[
\Gamma \cong \varprojlim_{n\in\mathbb{N}} C_{p} \wr \overset{n}\cdots \wr C_{p}
\]
is a Sylow pro-$p$ subgroup of $\Aut(T)$ corresponding to the $p$-cycle $(1\,2\, \cdots \,p)$.
For a subgroup~$\mathcal{G}$ of~$\Gamma$, the Hausdorff dimension of the closure of~$\mathcal{G}$ in~$\Gamma$ is given by
\begin{equation}
\label{eqn:hausdorff dim}
\h_\Gamma (\overline{\mathcal{G}})
=
\varliminf_{n\to \infty}
\frac{\log|\mathcal{G}:\St_{\mathcal{G}}(n) |}{\log| \Gamma :\St_\Gamma(n)| } \in [0,1], 
\end{equation}
where $\varliminf$ represents the lower limit.
The Hausdorff dimension of $\overline{\mathcal{G}}$ is a measure of how dense
$\overline{\mathcal{G}}$ is in $\Gamma$.
This concept was first applied by Abercrombie~\cite{Abercrombie} and by Barnea and Shalev~\cite{BaSh97} in the more general setting of profinite groups. We note that the Hausdorff dimension of the closures of several prominent weakly branch groups, such as the first \cite{NewHorizons} and second \cite{MarAn} Grigorchuk groups, the siblings of the first Grigorchuk group \cite{Sunic}, the GGS-groups \cite{FAZR}, the branch path groups \cite{FAGT}, and generalisations of the Hanoi tower groups \cite{HanSki}, have been computed.

\begin{theorem}\label{thm:hdim}
Let $G$ be a $p$-Basilica group, for $p$ a prime.
Then:
\begin{enumerate}
    \item The orders of the congruence quotients of $G$ are given by
    \[
    \log_p|G:\St_{G}(n)| =
    \begin{cases}
    p^{n-1}+p^{n-3}+\cdots+ p^3+p+\frac{n}{2} & \text{for }n \text{ even},\\
p^{n-1}+ p^{n-3}+\cdots+ p^4+p^2+\frac{n+1}{2} & \text{for }n \text{ odd}.
    \end{cases}
    \]
    \item  The Hausdorff dimension of the closure of $G$ in $\Gamma$ is
    \[
    \h_\Gamma(\overline{G})= \dfrac{p}{p+1}.
    \]
\end{enumerate}
\end{theorem}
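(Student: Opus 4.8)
The plan is to prove part (i) first, by induction on $n$, exploiting the super strongly fractal property and the weakly regular branch structure over $G'$, and then to derive part (ii) by a direct limit computation using formula (1.1). The central quantity to control is the index $|G:\St_G(n)|$, equivalently the order of the congruence quotient $G/\St_G(n)$. My strategy for this is to relate $\St_G(n)$ to $\St_G(n-1)$ through the decomposition of $G$ acting on the first-level subtrees, so that I can set up a recursion for $\log_p|G:\St_G(n)|$.

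Let me sketch the inductive engine. Since $G$ acts on the $p$-adic tree with first-level permutation group generated by the $p$-cycle $\sigma$, I have a short exact sequence relating $\St_G(1)$ to $G/\St_G(1)\cong C_p$, so $|G:\St_G(1)|=p$. For the recursion I would use the embedding $\psi\colon\St_G(1)\hookrightarrow G\times\overset{p}\cdots\times G$ given by the section maps into the $p$ first-level subtrees; super strong fractality (Theorem~\ref{thm:super-strongly-fractal}) guarantees each coordinate projection is onto $G$. The key point is to compute the \emph{image} of $\psi(\St_G(n))$ inside $(G/\St_G(n-1))^p$, since then
\[
|G:\St_G(n)| = |G:\St_G(1)|\cdot|\St_G(1):\St_G(n)| = p\cdot|\psi(\St_G(1)):\psi(\St_G(n))|,
\]
and the right-hand factor is the index of $\psi(\St_G(n))$ in $\psi(\St_G(1))$. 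Because $G$ is weakly regular branch over $G'$, the subgroup $G'\times\overset{p}\cdots\times G'$ is contained in $\psi(\St_G(1))$, so modulo $\St_G(n-1)$ in each factor the image is pinned down once I know how $\psi(\St_G(1))$ sits relative to $(G/\St_G(n-1))^p$. Concretely I expect a recursion of the shape
\[
\log_p|G:\St_G(n)| = p\cdot\log_p|G:\St_G(n-1)| - (\text{correction}),
\]
where the correction term accounts for the relations among the $p$ coordinates coming from the abelianisation $G/G'\cong\mathbb{Z}\times\mathbb{Z}$ and the structure of $G'/G''$ recorded in Theorem~\ref{thm:structural-results}. Solving this recursion and checking it against the two-case closed form (even versus odd $n$) gives part (i); the parity split should emerge naturally because $a$ and $b$ act at alternating levels (one is trivial on the top level, the other carries the $p$-cycle), so the contribution of the generators to successive quotients alternates.

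For part (ii), I would feed the formula from (i) into (1.1). The denominator is explicit: since $\Gamma$ is the inverse limit of iterated wreath products of $C_p$, one has $\log_p|\Gamma:\St_\Gamma(n)| = 1+p+p^2+\cdots+p^{n-1} = \frac{p^n-1}{p-1}$. Taking the dominant term of the numerator from (i), namely $p^{n-1}+p^{n-3}+\cdots$, which is asymptotic to $\frac{p^n}{p^2-1}$ up to the lower-order linear term $\tfrac{n}{2}$ that vanishes in the ratio, I compute
\[
\h_\Gamma(\overline{G}) = \varliminf_{n\to\infty}\frac{p^n/(p^2-1)}{(p^n-1)/(p-1)} = \frac{1/(p^2-1)}{1/(p-1)} = \frac{p-1}{p^2-1} = \frac{1}{p+1}
\]
 — so I must be careful with the exact leading coefficient, since the target is $\frac{p}{p+1}$, meaning the numerator's growth rate is in fact $\frac{p}{p^2-1}\,p^n$ and I will recompute the geometric sum $p^{n-1}+p^{n-3}+\cdots$ accordingly; the lower limit and the vanishing of the $n/2$ term make both parities give the same value, so the $\varliminf$ is an honest limit.

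\textbf{Main obstacle.} The hard part will be establishing the recursion in part (i) precisely, in particular computing the index of the image $\psi(\St_G(n))$ inside $\psi(\St_G(1))$ without over- or under-counting the relations among the $p$ first-level sections. Unlike branch groups where $G'\times\cdots\times G'$ has finite index in $\St_G(1)$ with easily tracked cokernel, here $G$ is only \emph{weakly} branch, so I must pin down exactly how much larger $\psi(\St_G(1))$ is than $(G')^p$ and how the generators $a,b$ distribute across the coordinates at each level. Keeping track of this — together with the alternating role of $a$ and $b$ that produces the even/odd dichotomy — is where the genuine computation and the risk of error lie; the Hausdorff computation in part (ii) is then routine once the closed form is correct.
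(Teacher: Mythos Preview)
Your recursive strategy via the first-level embedding $\psi$ is exactly the paper's approach, and the shape of recursion you anticipate, $\log_p|G:\St_G(n)| = p\log_p|G:\St_G(n-1)| - (\text{correction})$, is precisely what emerges. The difference is in how the correction is isolated. You propose to work directly with $\psi(\St_G(1))$ inside $G^p$, which concretely means $(B\times\cdots\times B)\langle(a,\ldots,a)\rangle$; tracking both the diagonal $a$-part and the $B$-factors through the recursion is doable but messy. The paper instead first splits off the abelian layer,
\[
|G:\St_G(n)| \;=\; |G:G'\St_G(n)|\cdot|G':\St_{G'}(n)| \;=\; p^n\cdot|G':\St_{G'}(n)|,
\]
using Theorem~\ref{thm:n-order}(ii) for the factor $p^n$, and then runs the recursion on $|G':\St_{G'}(n)|$. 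This is where the weakly-branch structure pays off cleanly: $\psi(G') = (G'\times\cdots\times G')\rtimes C$ with $C\cong\mathbb{Z}^{p-1}$ (Theorem~\ref{thm:semidirect}(i)), and Lemma~\ref{lem:psi stG'} gives the exact description
\[
\psi(\St_{G'}(n)) \;=\; \big(\St_{G'}(n-1)\times\cdots\times\St_{G'}(n-1)\big)\,C^{\,p^{\beta(n-1)}}.
\]
The correction term is then just $|C:C^{p^{\beta(n-1)}}| = p^{(p-1)\beta(n-1)}$, yielding $\log_p|G:\St_G(n)| = p\log_p|G:\St_G(n-1)| - (p-1)(m+e-1) + 1$ for $n=2m+e$. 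So the ``main obstacle'' you flag is resolved not by a harder computation inside $\psi(\St_G(1))$ but by passing to $G'$ first; this is the move your sketch is missing.

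For part (ii), your first pass dropped a factor of $p$: the geometric sum $p^{n-1}+p^{n-3}+\cdots+p$ equals $\dfrac{p^{n+1}-p}{p^2-1}\sim\dfrac{p}{p^2-1}\,p^n$, not $\dfrac{p^n}{p^2-1}$. With that fixed, $\dfrac{p/(p^2-1)}{1/(p-1)}=\dfrac{p}{p+1}$ as required, and since both parities give the same leading term the $\varliminf$ is indeed an ordinary limit.
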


\medskip

In Section~\ref{sec:growth}, we give a recursive presentation, a so-called $L$-presentation, for the $p$-Basilica groups (Proposition~\ref{pr:11}), plus we  show that the $p$-Basilica groups are amenable but not elementary subexponentially amenable (Lemma~\ref{lem:amenable}), and have exponential growth (Theorem~\ref{thm:exponentialgrowth}); we refer to Section~\ref{sec:growth} for the definitions. To the best of our knowledge, the only other infinite family of weakly branch groups that are amenable but not elementary subexponentially amenable is the family of $p$-generator Basilica groups acting on the $p$-adic tree; see~\cite{Sasse}. 

Francoeur~\cite[Thm.~4.28]{Francoeur-paper} proved that the  Basilica group does not possess maximal subgroups of infinite index, thus providing the first example of a weakly branch but not branch group without maximal subgroups of infinite index. Also, the Basilica group has non-normal maximal subgroups \cite[Cor. 8.3.2]{Francoeur}. In Subsection~\ref{sec:nilpotent+maximal}, we extend these results to $p$-Basilica groups for all primes $p$, likewise giving another infinite family of  weakly branch groups with such properties.
Note that the first  infinite family of weakly branch, but not branch, groups without maximal subgroups of infinite index was given by Francoeur and Thillaisundaram in~\cite{FT}, namely the GGS-groups defined by the constant vector. 

\begin{theorem}\label{thm:maximal}
Let $G$ be a $p$-Basilica group, for $p$ a prime. Then all maximal subgroups of $G$ have finite index, and $G$ has infinitely many non-normal maximal subgroups.
\end{theorem}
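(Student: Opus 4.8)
The plan is to handle the two assertions in turn, in each case reducing to the behaviour of subgroups relative to $G'$ and then invoking the weakly regular branch structure over $G'$ together with the super strong fractality established in \Cref{thm:structural-results,thm:super-strongly-fractal}. For the first assertion, suppose towards a contradiction that $M$ is a maximal subgroup of infinite index. Since $G/G'\cong\mathbb{Z}\times\mathbb{Z}$ by \Cref{thm:structural-results}(i) and every maximal subgroup of a finitely generated abelian group has finite (indeed prime) index, any maximal subgroup containing $G'$ has finite index; hence $G'\not\le M$. As $G'\trianglelefteq G$ and $M$ is maximal, this forces $MG'=G$, so that $M$ surjects onto $G^{\mathrm{ab}}$.

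The core of the argument is then a descent along the tree in the spirit of Francoeur's treatment of the Basilica group \cite{Francoeur-paper}. Since $G/\St_G(1)\cong C_p$ is abelian we have $G'\le\St_G(1)$, and the modular law gives $\St_G(1)=(M\cap\St_G(1))\,G'$. Passing to sections and using that $G$ is super strongly fractal, so that each coordinate section $\varphi_i$ maps $\St_G(1)$ onto $G$, one shows that every projection $\varphi_i(M\cap\St_G(1))$ surjects onto $G^{\mathrm{ab}}$. I would then try to upgrade this to the statement that $M$ absorbs the rigid copies $G'\times\cdots\times G'\le\psi(G')$ supplied by weak regular branchness, which would give $G'\le M$ and the desired contradiction. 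The main obstacle is exactly the ``diagonal'' scenario: a priori $M\cap\St_G(1)$ could project onto $G$ in each coordinate while remaining of infinite index in $\St_G(1)$, so that no rigid copy of $G'$ is visibly contained in $M$. Ruling this out is where the fine structure of $G'$ recorded in \Cref{thm:structural-results}(ii)--(iv), namely the descriptions of $G'/\gamma_3(G)$, $G'/G''$ and $\gamma_3(G)/G''$, must be used to pin down how the rigid copies generate $G'$ and to show that the correlated behaviour cannot persist through all levels; this is the step I expect to be the most delicate.

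For the second assertion, the key reduction is that a finite nilpotent group has all of its maximal subgroups normal, so a non-normal maximal subgroup of $G$ must arise from a non-nilpotent finite quotient. Note that the congruence quotients $G/\St_G(n)$ are finite $p$-groups, since $G\le\Gamma$ and $\Gamma$ is pro-$p$; hence a non-normal maximal subgroup cannot contain any level stabiliser, in accordance with the failure of the congruence subgroup property in \Cref{thm:CSP}. The plan is therefore to exhibit, for each prime $q\neq p$, a surjection of $G$ onto the non-nilpotent group $C_q\wr C_p$. Choosing a surjection $\mu\colon G\to C_q$ through $G^{\mathrm{ab}}\cong\mathbb{Z}^2$ with $\mu(b)\neq 0$, and composing the self-similar embedding $G\hookrightarrow G\wr C_p$, $g\mapsto (\varphi_1(g),\dots,\varphi_p(g))\,\tau_g$, with $\mu$ applied to each section, yields a homomorphism $\Phi\colon G\to C_q\wr C_p$. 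The root action of $b$ gives the top $C_p$, while $\Phi(a)=(0,\dots,0,\mu(b))$ lies in the base; conjugating $\Phi(a)$ by $\Phi(b)$ cyclically permutes its support, so $\Phi(G)$ contains every cyclic shift of $(0,\dots,0,\mu(b))$ and hence the whole base $C_q^{\,p}$, whence $\Phi$ is onto. Pulling back a non-normal maximal subgroup of $C_q\wr C_p$ then produces a non-normal maximal subgroup of $G$, and letting $q$ range over the infinitely many primes distinct from $p$ gives quotients with pairwise distinct kernels, hence infinitely many such subgroups. The only points requiring care here are the verification that $\Phi$ is well defined and surjective and that $C_q\wr C_p$ is non-nilpotent, the latter holding because its Sylow $p$-subgroup is the non-normal complement $C_p$.
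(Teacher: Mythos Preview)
For the second assertion your approach is correct and close to the paper's. Both construct a surjection $G\twoheadrightarrow C_q\wr C_p$; the paper does so via an intermediate quotient $G\twoheadrightarrow W_p(\mathbb{Z})$ (Lemma~\ref{lem:wreath quotient}) followed by reduction modulo $q$, while you compose the wreath embedding $\psi\colon G\hookrightarrow G\wr C_p$ with a character $\mu\colon G\to C_q$. Your surjectivity check is fine. The paper is more explicit about the non-normal maximal subgroup: restricting to primes $q\equiv 1\pmod p$, the $\sigma$-action on $(\mathbb{Z}/q\mathbb{Z})^p$ diagonalises, and a complement to a nontrivial eigenspace together with $\langle\sigma\rangle$ gives a non-normal subgroup of index exactly $q$; Dirichlet then furnishes infinitely many such $q$, and the distinct prime indices immediately distinguish the resulting maximal subgroups of $G$. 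Your route via ``non-nilpotent $\Rightarrow$ some non-normal maximal exists'' is valid, but the final clause ``pairwise distinct kernels, hence infinitely many such subgroups'' is not yet a proof: distinct kernels of the $\Phi_q$ do not by themselves force the pulled-back maximals to differ. The fix is easy---observe that the only index-$p$ subgroup of $C_q\wr C_p$ is the normal base, so any non-normal maximal subgroup there has $q$-power index, and for distinct primes $q$ these indices in $G$ are coprime.

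For the first assertion there is a genuine gap. You correctly isolate the obstacle---the ``diagonal'' possibility that $M\cap\St_G(1)$ surjects in every coordinate without containing a rigid factor---but your proposed resolution (force $G'\le M$ by absorbing rigid copies, appealing to the structure of $G'/\gamma_3(G)$, $G'/G''$, $\gamma_3(G)/G''$) is speculative and is not how the argument runs. Although you invoke Francoeur, you then depart from his framework. The paper does not attempt to prove $G'\le M$. It uses the \emph{prodense} machinery: a maximal subgroup of infinite index is proper prodense, and by \cite[Lem.~3.1, Thm.~3.2]{Francoeur-paper} every section $\psi_u(\St_H(u))$ of a proper prodense $H\le G$ is again proper prodense in $G$. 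The contradiction is obtained by exhibiting a vertex $u$ with $\psi_u(\St_H(u))=G$. Concretely, \cite[Prop.~4.27]{Francoeur-paper} produces a vertex $v$ with $\{ab,b^{-1}a\}$ or $\{ba,b^{-1}a\}$ inside $\psi_v(\St_H(v))$; from there one computes, for example, $\psi\big((b^{-1}a^p)^p\big)=(a^{-1}b^p,\dots,a^{-1}b^p)$ and $\psi(a^{-1}b^p)=(a,\dots,a,b^{-1}a)$, descending two levels to find both $a$ and $b$ in a deeper projection. The commutator quotients you cite play no role; what is needed is the prodense framework plus these explicit section calculations adapted to odd $p$.
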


\smallskip

\noindent
\textit{Notation.}
Throughout, we  use left-normed commutators, for example, $[x,y,z] = [[x,y],z]$.
For a group~$\mathcal{G}$, a subgroup~$H\le \mathcal{G}$ and  $g\in \mathcal{G}$, we write
$[H,g]=\langle [h,g]\mid h\in H\rangle$.
Also if $N\trianglelefteq \mathcal{G}$ then we write $g\equiv_N h$ to mean that the images of $g$ and $h$ in $\mathcal{G}/N$ coincide.
For $\mathcal{G}$ a group and $p$ a prime, we write $W_p(\mathcal{G})$ for the wreath product
of~$\mathcal{G}$ with a cyclic group of order $p$.

\smallskip

\textbf{Acknowledgements.}
We thank D.~Francoeur,  M.~Petschick and K.~Rajeev for helpful discussions.
Furthermore we are grateful to B. Klopsch for his useful comments and for pointing out Sasse's work.


\section{Preliminaries}

\subsection{The group $\Aut(T)$}

Let $p$ be a prime and let $T$ be the \textit{$p$-adic tree}, i.e.\ the rooted tree having $p$ descendants at every vertex.
If we choose an alphabet $X$ with $p$ letters, $T$ can be represented as the graph whose vertices are the elements of the free monoid $X^*$, the root is the empty word $\varnothing$, and $w$ is a descendant of $u$ provided that $w=ux$ with $x\in X$.

For a given vertex $u$, the set of vertices $uv$ with $v\in X^*$ are said to succeed $u$.
They form a tree $T_u$ rooted at $u$, which is isomorphic to $T$.
We denote by $|u|$ the length of $u$ as a word.
For every $n\in\N\cup\{0\}$, the set $L_n$ of all words of length $n$ is called the
\textit{$n$th layer} of the tree.

Automorphisms of $T$ as a graph form a group $\Aut(T)$ under composition.
Let $u$ be a vertex of $T$ and let $f\in\Aut(T)$.
We use exponential notation for images of automorphisms and, more generally, permutations.
Thus the image of $u$ under $f\in\Aut(T)$ is $u^f$.
The \textit{label} of $f$ at $u$ is the permutation $f(u)$ of the alphabet $X$ defined by the rule
\[
(ux)^f = u^f x^{f(u)},
\qquad
\text{for every $x\in X$.}
\]
The \textit{portrait} of $f$ is the set of all labels of $f$, and there is a one-to-one correspondence between automorphisms of~$T$ and portraits.
The \textit{support} of $f$ is the set of vertices with non-trivial label.
We say that $f$ is \textit{rooted} if the support is contained in the root, and $f$ is \textit{directed} if the support is infinite and consists only of descendants of a given infinite path starting at the root.

In a similar way, the section $f_u$ of $f$ at $u$ is the automorphism of $T$ defined by
\[
(uv)^f = u^f v^{f_u},
\qquad
\text{for every $v\in X^*$.}
\]
For all $f,g\in \Aut(T)$ and $u,v\in X^*$, we have $(f_u)_v=f_{uv}$, $(fg)_u=f_u g_{u^f}$,
\begin{equation}
\label{eqn:section conjugate}
(f^g)_{u^g} = (g_u)^{-1} \, f_ u \, g_{u^f}.
\end{equation}

\subsection{Subgroups of $\Aut(T)$}

For a vertex $u$ of $T$, the \textit{vertex stabiliser} $\St(u)$ is the subgroup consisting of all automorphisms of $T$ fixing $u$.
The map $\psi_u:f\mapsto f_u$ is a homomorphism from $\St(u)$ onto $\Aut(T)$.
For every $n\in\N$, the \textit{$n$th level stabiliser} is
\[
\St(n) = \bigcap_{u\in L_n} \, \St(u).
\]
Then $\St(n)$ is a normal subgroup of $\Aut(T)$ and $\Aut(T)$ is isomorphic to the inverse limit of the finite groups $\Aut(T)/\St(n)$.
Hence $\Aut(T)$ is a profinite group with $\{\St(n)\}_{n\in\N}$ as a basis of neighbourhoods of the identity.
For every $n\in\N$, we have an isomorphism
\[
\begin{matrix}
\psi_n & \colon & \St(n) & \longrightarrow & \Aut(T) \times \overset{p^n}{\cdots} \times \Aut(T)
\\[3pt]
& & f & \longmapsto & (f_u)_{u\in L_n}.
\end{matrix}
\]
The map $\psi_1$ can be extended to an isomorphism
\[
\begin{matrix}
\psi & \colon & \Aut(T) & \longrightarrow & \Aut(T) \wr S_X
\\[3pt]
& & f & \longmapsto & \psi_1(f) \, \tau,
\end{matrix}
\]
where $\tau$ is the label of $f$ at the root.
Thus we can define automorphisms of $\Aut(T)$ by giving their image under $\psi$.
If $\sigma\in S_X$ is a fixed $p$-cycle, we can obtain a Sylow pro-$p$ subgroup $\Gamma(\sigma)$ of $\Aut(T)$ by considering all automorphisms whose portrait only contains labels from
$\langle \sigma \rangle$.

Now let $G$ be a subgroup of $\Aut(T)$.
We write $\St_G(u)=\St(u)\cap G$ and $\St_G(n)=\St(n)\cap G$.
The latter is a normal subgroup of $G$ and we set $G_n=G/\St_G(n)$, which is called the
\textit{$n$th congruence quotient} of $G$.
We say that $G$ is:
\begin{itemize}
\item
\textit{Level-transitive} if it acts transitively on $L_n$ for every $n\in\N$.
\item
\textit{Self-similar} if all sections of elements of $G$ at all vertices belong to $G$.
\item
\textit{Fractal} if it is level-transitive and $\psi_u(\St_G(u))=G$ for every vertex $u$ of the tree.
\item
\textit{Super strongly fractal} if it is level-transitive and $\psi_u(\St_G(n))=G$ for every $u\in L_n$ and every $n\in\N$.
\end{itemize}
Note that if $G$ is self-similar then $\psi_n(\St_G(n))\subseteq G\times \overset{p^n}{\cdots} \times G$ for all $n$, and if furthermore $G$ is contained in a Sylow pro-$p$ subgroup $\Gamma(\sigma)$ then
$\psi(G)\subseteq G\wr \langle \sigma \rangle=W_p(G)$.
The $n$th congruence quotient of $\Gamma(\sigma)$ is isomorphic to the iterated wreath product of $n$ copies of $C_p$ and $\Gamma(\sigma)$ is the inverse limit of these finite $p$-groups.

The \textit{rigid vertex stabiliser} $\Rist_G(u)$ of a vertex $u$ in $G$ is the subgroup consisting of all automorphisms in $G$ that fix all vertices outside $T_u$.
Then for every $n\in\N$ we define the \textit{rigid $n$th level stabiliser} as:
\[
\Rist_G(n) = \langle \Rist_G(u) \mid u\in L_n \rangle =  \prod\nolimits_{u\in L_n} \, \Rist_G(u) \trianglelefteq G.
\]
If $G$ is level-transitive we say that $G$ is a \textit{branch group} if $|G:\Rist_G(n)|<\infty$ for all $n\in\N$, and that it is \textit{weakly branch} if $\Rist_G(n)\ne 1$ for all $n$.
If $G$ is level-transitive and self-similar, and $K\times \cdots \times K \subseteq \psi(K)$ for some
$K\le G$, we say that $G$ is \emph{regular branch over $K$} if $|G:K|<\infty$ and that $G$ is \textit{weakly regular branch over $K$} if $K\ne 1$.
Observe that being (weakly) regular branch implies being (weakly) branch.

\subsection{$p$-Basilica groups}

Let $X=\{x_1,\ldots,x_p\}$ and let $\Gamma$ be the Sylow pro-$p$ subgroup of $\Aut(T)$ corresponding to the $p$-cycle $\sigma=(x_1\,x_2\,\cdots\,x_p)$.
The \emph{$p$-Basilica group} is the subgroup~$G$ of~$\Gamma$ generated by the automorphisms $a$ and $b$ given by
\[
\psi(a) = (1, \dots,1, b)
\quad
\text{and}
\quad
\psi(b) = (1, \dots, 1, a) \, \sigma.
\]
Note that the $2$-Basilica group is the well-known Basilica group mentioned in the introduction.
The portraits of $a$ and $b$ are as follows.
\begin{figure}[h!]
    \centering
    \includegraphics[scale=0.8]{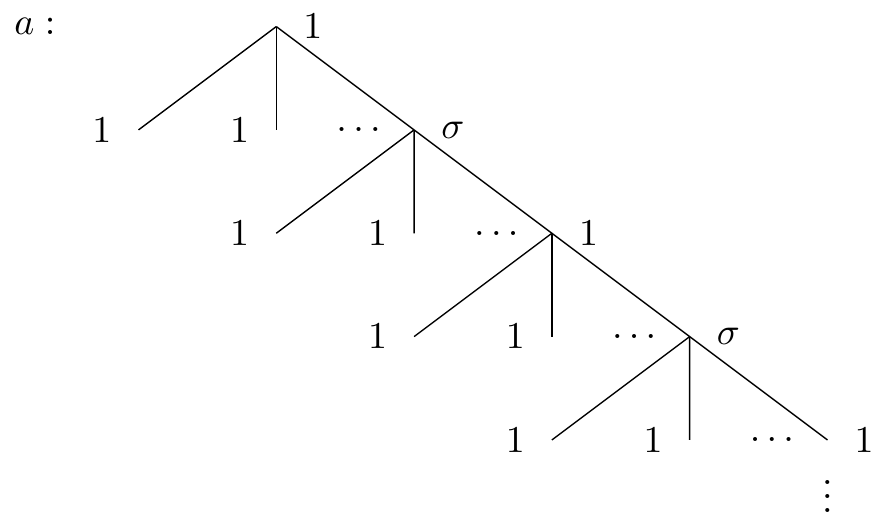}
     \includegraphics[scale=0.8]{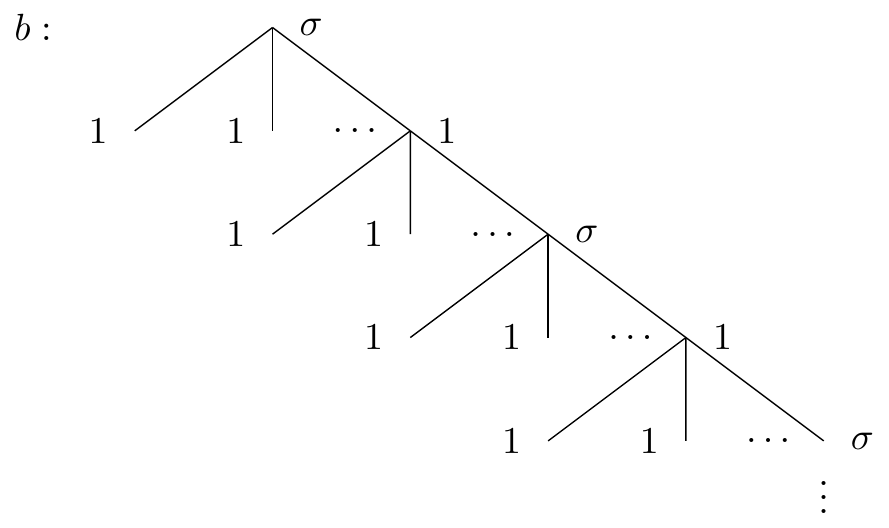}
    \caption{The portrait of the generators of a $p$-Basilica group.}
    \label{fig:my_label}
\end{figure}

\section{First properties}
\label{sec:first-properties}

In this section we prove some basic properties of the $p$-Basilica groups.
We start with the following elementary but essential result.

\begin{lem}
\label{lem:basilicaleveltransitive}
Let $G$ be a $p$-Basilica group, for a prime $p$.
Then $G$ is fractal and level-transitive. 
\end{lem}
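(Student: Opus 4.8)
The plan is to reduce both claims to a single \emph{level-one recurrence} property, namely that the first-level section map carries the stabiliser of each first-level vertex onto the whole group, and then to bootstrap from there. First I would record that $G$ is self-similar: the first-level sections of the generators lie in $\{1,a,b\}$ by definition, so every section of every element of $G$ again lies in $G$. In particular $\psi_{x_i}(\St_G(x_i))\subseteq G$ for each $x_i\in L_1$, and it remains to prove the reverse inclusion.

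The heart of the argument is this reverse inclusion, and it is where I expect the only real work to lie. Since $a\in\St_G(1)$ with $\psi_1(a)=(1,\dots,1,b)$, and since $\psi_1(b^p)=(a,\dots,a)$ (a short computation using that $b$ acts as the $p$-cycle $\sigma$ at the root and has a single nontrivial first-level section $b_{x_p}=a$), the projection at the last coordinate already satisfies $\psi_{x_p}(\St_G(1))\supseteq\langle a,b\rangle=G$. The difficulty is the coordinates $x_i$ with $i<p$, where $a$ contributes only the trivial section. To reach them I would transport the nontrivial section of $a$ around the first level by conjugating with powers of $b$: using $(a^{b})_{u^{b}}=(b_u)^{-1}a_u b_u$ (valid because $a$ fixes $L_1$), one checks that $\psi_1(a^{b^{k}})$ has a single nontrivial entry, equal to $b^{a}$, sitting in coordinate $x_{k}$ for $1\le k\le p-1$, while $a=a^{b^{0}}$ covers coordinate $x_p$. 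Each such conjugate lies in $\St_G(1)$ because $\St_G(1)\trianglelefteq G$. Consequently, for every $x_i$ the projection $\psi_{x_i}(\St_G(1))$ contains $a$ (coming from $b^p$) together with $b$ or its conjugate $b^{a}$; since $a$ itself is present, $\langle a,b^{a}\rangle=\langle a,b\rangle=G$, and the recurrence $\psi_{x_i}(\St_G(x_i))=G$ follows for all $i$. The only subtlety to watch is that conjugation produces $b^{a}$ rather than $b$, but this is harmless precisely because $a$ is available in the same projection.

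With the level-one recurrence in hand, both conclusions follow by routine inductions on the level. For level-transitivity I would argue inductively: $\langle b\rangle$ already acts as $\langle\sigma\rangle$ transitively on $L_1$; assuming transitivity on $L_n$, any two vertices of $L_{n+1}$ can first be moved into a common first-level subtree using transitivity on $L_1$, and then matched inside that subtree by lifting, through $\psi_{x_i}(\St_G(x_i))=G$, an element of $G$ that realises the required permutation of $L_n$ (which exists by the inductive hypothesis). For fractality at an arbitrary vertex $u=x_i u'$ I would show $\psi_{x_i}(\St_G(u))=\St_G(u')$: one inclusion is immediate, and the other uses the level-one recurrence to lift any $g\in\St_G(u')$ to an element of $\St_G(x_i)$ with section $g$ at $x_i$, which then fixes $u$. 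Composing the section maps via $(f_{x_i})_{u'}=f_{u}$ and invoking the inductive hypothesis $\psi_{u'}(\St_G(u'))=G$ yields $\psi_u(\St_G(u))=G$, completing the induction. The genuinely nontrivial input is the level-one computation above; everything after it is a formal consequence of self-similarity together with transitivity and recurrence at the first level.
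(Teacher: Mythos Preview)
Your proof is correct and follows the same underlying idea as the paper: verify a level-one recurrence and deduce fractality and level-transitivity from it. The difference is one of economy. The paper invokes \cite[Lem.~2.7]{Jone}, which says that for a self-similar group it suffices that the action on $L_1$ be transitive and that $\psi_x(\St_G(x))=G$ for a \emph{single} vertex $x\in L_1$; it then checks this only at $x=x_p$ via $\psi(a)=(1,\dots,1,b)$ and $\psi(b^p)=(a,\dots,a)$, and stops. You instead establish the recurrence at \emph{every} first-level vertex by transporting the nontrivial section of $a$ around with the conjugates $a^{b^k}$ (your observation that the resulting section is $b^a$ rather than $b$, and that this is harmless because $a$ is already present, is exactly right), and then carry out the inductions for level-transitivity and fractality by hand. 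Your route is self-contained and avoids the external citation; the paper's route is shorter because the conjugation step and the two inductions are precisely what the cited lemma packages. Both reach the same destination with the same essential computation at $x_p$.
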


\begin{proof}
By \cite[Lem.~2.7]{Jone}, it suffices to show that $G$ acts transitively on the first layer and that
$\psi_x(\St_G(x))=G$ for some $x\in X$ (see also \cite[Sec.~3]{Gri11}).
This is straightforward since $b$ acts transitively on the first
layer and since $\psi(a)=(1,\dots, 1, b)$ and $\psi(b^p)=(a,\dots,a)$.
\end{proof}

Next we consider the stabilisers in~$G$ of the first two levels.
Recall that $G_n=G/\St_G(n)$, and for convenience, we set $A=\langle a \rangle^G$ and
$B=\langle b \rangle^G$.

\begin{lem}
\label{lem: stabiliser}
Let $G$ be a $p$-Basilica group, for a prime $p$.
Then:
\begin{enumerate}
\item
$\St_G(1)=A\langle b^{p}\rangle=\langle  a, a^b, \ldots, a^{b^{p-1}}, b^p\rangle$
and $G_1=\langle b\St_G(1)\rangle\cong C_p$.
\item
$G_2\cong C_p\wr C_p$ is a $p$-group of maximal class of order $p^{p+1}$.
\end{enumerate}
\end{lem}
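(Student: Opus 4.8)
The plan is to handle the two parts in turn, in each case first pinning down the action on the relevant level and then identifying the resulting congruence quotient.

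For (1), I would begin by reading the root labels off $\psi(a)=(1,\dots,1,b)$ and $\psi(b)=(1,\dots,1,a)\sigma$: the label of $a$ at the root is trivial, while that of $b$ is the $p$-cycle $\sigma$. Hence $a\in\St_G(1)$, and the image of $b$ generates $G_1=G/\St_G(1)$, which is cyclic of order exactly $p$ because $\sigma$ has order $p$; this gives $G_1=\langle b\St_G(1)\rangle\cong C_p$ and shows that $\St_G(1)$ has index $p$. Since $\St_G(1)\trianglelefteq G$ contains $a$, it contains $A=\langle a\rangle^G$, and it also contains $b^p$, so $A\langle b^p\rangle\le\St_G(1)$. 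For the reverse inclusion I would use that $A$ is normal with $G/A=\langle bA\rangle$ cyclic, whence $G=A\langle b\rangle$ and every $g\in G$ can be written $g=ub^k$ with $u\in A$; reducing modulo $\St_G(1)$ forces $p\mid k$, so $g\in A\langle b^p\rangle$. Finally, $A=\langle a^{b^k}\mid k\in\mathbb{Z}\rangle$, and since both $a$ and $b^p$ lie in $H_0:=\langle a,a^b,\dots,a^{b^{p-1}},b^p\rangle$ one recovers every $a^{b^k}$ from $a^{b^r}$ ($0\le r\le p-1$) and the power $b^{\pm p}$, giving $A\le H_0$ and hence the stated generating set $\St_G(1)=A\langle b^p\rangle=H_0$.

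For (2), I would pass to the quotient $\Gamma/\St_\Gamma(2)\cong C_p\wr C_p$, which is the second congruence quotient of the Sylow pro-$p$ subgroup recalled in the preliminaries, and compute the images of $a$ and $b$ there. Writing an element of $C_p\wr C_p$ as $(c_1,\dots,c_p;d)$ with base labels $c_i$ at the first-level vertices $x_i$ and root label $d$, the formula $\psi(a)=(1,\dots,1,b)$ shows that the only nontrivial first-level label of $a$ is the root label $\sigma$ of its section $b$ at $x_p$, so $\bar a=(1,\dots,1,\sigma;1)$; similarly the sections $1,\dots,1,a$ of $b$ all have trivial root label, so $\bar b=(1,\dots,1;\sigma)$ is just the top generator. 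Since $\bar b$ cyclically permutes the base coordinates, the conjugates $\bar b^{-j}\bar a\,\bar b^{\,j}$ place $\sigma$ in every coordinate as $j$ ranges over $0,\dots,p-1$, so $\langle\bar a,\bar b\rangle$ is all of $C_p\wr C_p$. As $G\le\Gamma$ yields an embedding $G_2\hookrightarrow\Gamma/\St_\Gamma(2)$, this forces $G_2\cong C_p\wr C_p$, a group of order $p^p\cdot p=p^{p+1}$; its maximal class follows from the classical fact that the iterated wreath product $C_p\wr C_p$, i.e.\ the Sylow $p$-subgroup of $S_{p^2}$, has nilpotency class $p$.

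I expect the genuinely delicate points to be bookkeeping rather than conceptual. In (1) the step needing care is the passage from $G=A\langle b\rangle$ to the normal form $g=ub^k$ and the reduction of the infinitely many conjugates $a^{b^k}$ to the finite set $a,a^b,\dots,a^{b^{p-1}}$ using that $b^p$ lies in the subgroup; in (2) the main thing to get right is the explicit identification of $\bar a$ and $\bar b$ inside $C_p\wr C_p$, after which surjectivity onto the wreath product is immediate and the maximal-class assertion is a citation.
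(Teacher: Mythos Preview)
Your proposal is correct and follows essentially the same approach as the paper. The only minor difference is in (ii): the paper first shows $\St_{G_2}(1)$ is elementary abelian of order $p^p$ (so $|G_2|=p^{p+1}$) and then invokes the embedding $G_2\hookrightarrow \Gamma_2\cong C_p\wr C_p$, whereas you compute the images $\bar a,\bar b$ in $\Gamma_2$ explicitly and show they generate the whole wreath product; both arguments pivot on the same embedding and are equivalent in substance.
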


\begin{proof}
(i)
Observe that $a\in\St_G(1)$, and that $b^n\in\St_G(1)$ if and only if $p\mid n$. 
Since $\St_G(1)\trianglelefteq G$ we get $A\langle b^p\rangle\le \St_G(1)$.
This inclusion is an equality, since $G/A\langle b^p\rangle$ has order $p$ and $\St_G(1)$ is a proper subgroup of $G$.
Now the result follows by observing that
$\langle  a, a^b, \ldots, a^{b^{p-1}}, b^p\rangle \trianglelefteq G$.

(ii)
Since $\psi(b^p)=(a,\ldots,a)$ and $a\in\St_G(1)$, we have $b^p\in\St_G(2)$.
By (i), if we use the bar notation in $G_2$, we have
\[
\St_{G_2}(1)=\overline{\langle a, a^b, \ldots, a^{b^{p-1}} \rangle}.
\]
Since $\overline a$ has order $p$ in $G_2$ and the tuples
\[
\psi(a)=(1,\ldots,1,b)
\quad
\text{and}
\quad
\psi(a^{b^i})=(1,\overset{i-1}\ldots,1,a^{-1}ba,1,\ldots,1), \ \text{for } 1\le i\le p-1,
\]
commute with each other, $\St_{G_2}(1)$ is elementary abelian of order $p^p$.
Thus $|G_2|=p^{p+1}$.
To complete the proof, observe that since $G\le \Gamma$, the quotient $G_2=G/\St_G(2)$ embeds in
$\Gamma_2=\Gamma/\St_{\Gamma}(2)\cong C_p \wr C_p$, and that the latter is a $p$-group of maximal class of order $p^{p+1}$.
\end{proof}

Our next goal is to study the abelianisation of $G$.
In the remainder, let $A=\langle a \rangle^G$ and $B=\langle b \rangle^G$ as before.
Since $G=\langle a,b \rangle$, we have $A=\langle a \rangle G'$, $B=\langle b \rangle G'$, and
$G=AB$.
Observe that $\psi(a)=(1,\ldots,1,b)$ and $G$ being self-similar imply 
\begin{equation}
\label{psiA}
\psi(A) \subseteq B\times \cdots \times B.
\end{equation}
On the other hand, the map $\pi:W_p(G) \rightarrow G/G'$ sending
$(g_1,\ldots,g_p)\sigma^i$ to $g_1\cdots g_p G'$ is clearly a group homomorphism.
Since $\psi(b)=(1,\ldots,1,a)\sigma$, it follows that
\begin{equation}
\label{psiB}
(\pi\circ\psi)(B)\subseteq A/G'.
\end{equation}

\begin{thm}
\label{thm:G/G'}
Let $G$ be a $p$-Basilica group, for a prime $p$.
Then:
\begin{enumerate}
\item
$G/A=\langle bA \rangle$ and $G/B=\langle aB \rangle$ are infinite cyclic.
In particular, the elements $a$ and $b$ have infinite order in $G$.
\item
$G/G' = \langle aG'\rangle \times \langle bG'\rangle \cong \mathbb{Z}\times \mathbb{Z}$.
\item
$A\cap B=G'$.
\end{enumerate}
\end{thm}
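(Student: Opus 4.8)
The plan is to prove (i) first, since (ii) and (iii) then follow almost formally. Throughout write $\overline{g}$ for the image of $g$ in $G/G'$, and set $\theta=\pi\circ\psi\colon G\to G/G'$, which is a homomorphism because $\psi\colon G\to W_p(G)$ and $\pi$ are. From $\psi(a)=(1,\dots,1,b)$ and $\psi(b)=(1,\dots,1,a)\sigma$ one computes directly that $\theta(a)=\overline{b}$ and $\theta(b)=\overline{a}$, so $\theta$ realises the symmetry $a\leftrightarrow b$ modulo $G'$; in particular $\theta(B)=\langle\overline{a}\rangle=A/G'$, which is exactly \eqref{psiB}. Since $G=\langle a,b\rangle$, the quotients $G/A=\langle bA\rangle$ and $G/B=\langle aB\rangle$ are cyclic, so the whole content of (i) is that they are \emph{infinite}.

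The heart of the matter is a descent on the exponent of $b$ modulo $A$. I would isolate two implications. First, from $\psi(b^{pk})=(a^k,\dots,a^k)$ together with \eqref{psiA} one reads off that $b^{pk}\in A$ forces $a^k\in B$. Second, applying $\theta$ and \eqref{psiB} gives that $a^k\in B$ forces $\overline{b}^{\,k}=\theta(a^k)\in\theta(B)=A/G'$, i.e.\ $b^k\in A$. Composing yields the key step $b^{pk}\in A\Rightarrow b^k\in A$. Now suppose $G/A$ were finite, so that $\{k\in\mathbb{Z}:b^k\in A\}=d\mathbb{Z}$ with $d>0$. Since $A\subseteq\St_G(1)$ (by Lemma~\ref{lem: stabiliser}) and $b$ has root label $\sigma$, the membership $b^d\in A$ forces $\sigma^d=1$, hence $p\mid d$; writing $d=pk$, the implication above gives $b^k\in A$ with $0<k<d$, contradicting the minimality of $d$. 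Therefore $G/A\cong\mathbb{Z}$. The second implication then gives $a^k\in B\Rightarrow b^k\in A\Rightarrow k=0$, so $G/B\cong\mathbb{Z}$ as well, and in particular $a$ and $b$ have infinite order in $G$.

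Granting (i), part (iii) is immediate. Both $A=\langle a\rangle G'$ and $B=\langle b\rangle G'$ contain $G'$, so $(A\cap B)/G'=\langle\overline{a}\rangle\cap\langle\overline{b}\rangle$; if $\overline{a}^{\,m}=\overline{b}^{\,n}$, then $a^m\in B$, whence $m=0$ by the infinitude of $G/B$, and then $b^n\in G'\subseteq A$ forces $n=0$ by that of $G/A$. Thus $\langle\overline{a}\rangle\cap\langle\overline{b}\rangle=1$ and $A\cap B=G'$. Finally, part (ii) assembles these facts: $\overline{a}$ and $\overline{b}$ each have infinite order (e.g.\ $\overline{a}^{\,m}=1$ gives $a^m\in G'\subseteq B$, so $m=0$), they generate the abelian group $G/G'$, and their cyclic subgroups intersect trivially by (iii); hence $G/G'=\langle\overline{a}\rangle\times\langle\overline{b}\rangle\cong\mathbb{Z}\times\mathbb{Z}$.

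The one genuinely non-formal point is the descent in (i): everything hinges on interlocking the self-similar constraint \eqref{psiA}, which divides the exponent by $p$, with the $a\leftrightarrow b$ symmetry coming from $\theta$. I expect the care to lie in checking that $\theta$ is well defined with $\theta(a)=\overline{b}$ and $\theta(b)=\overline{a}$, and in verifying that the two implications combine with the divisibility $p\mid d$ to force $d=0$; once $G/A\cong\mathbb{Z}$ is in hand, the remaining deductions are routine manipulations inside $G/G'$.
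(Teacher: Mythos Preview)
Your proof is correct and follows essentially the same approach as the paper: both arguments hinge on the two implications $b^{pk}\in A\Rightarrow a^k\in B$ (from \eqref{psiA}) and $a^k\in B\Rightarrow b^k\in A$ (from \eqref{psiB} via $\pi\circ\psi$), and both derive a contradiction by descent on the exponent. The only cosmetic difference is that the paper packages the descent as a single minimal-$n$ argument treating both cases $a^n\in B$ and $b^n\in A$ at once, whereas you isolate the two implications and dispose of $G/A$ first before $G/B$.
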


\begin{proof}
Since $G/G'=A/G'\cdot B/G'$, with $A/G'=\langle aG' \rangle$ and $B/G'=\langle bG' \rangle$, both (ii) and (iii) follow immediately from (i).

We prove that $G/A$ and $G/B$ are infinite simultaneously.
Assume for a contradiction that, for some $n\in\N$, we have either $a^n\in B$ or $b^n\in A$, and let us
choose $n$ as small as possible.
If $b^n\in A\subseteq \St_G(1)$ then $n=pm$ for some $m$, and consequently
\[
\psi(b^n)=(a^m,\ldots,a^m)\in \psi(A) \subseteq B\times \cdots \times B,
\]
by (\ref{psiA}).
Hence $a^m\in B$, which is impossible since $m<n$.
On the other hand, if $a^n\in B$ then
\[
b^nG' = (\pi\circ\psi)(a^n) \in (\pi\circ\psi)(B) \subseteq A/G'
\]
by (\ref{psiB}).
Thus $b^n\in A$, which we just proved is not the case.
This completes the proof.
\end{proof}

Next we study rigid stabilisers and the branch structure of $G$.
To this purpose, the following result is very useful.
It is given in \cite[Prop.\ 2.18]{FAZR} for GGS-groups, but the same proof works more generally for level-transitive fractal groups.
We state this general version here for the convenience of the reader.

\begin{lem}
\label{lem:getting-direct-product}
Let $\mathcal{G}$ be a level-transitive fractal subgroup of $\Aut(T)$, and let $L$ and $N$ be two normal subgroups of $\mathcal{G}$.
Suppose that $L=\langle S \rangle^{\mathcal{G}}$ and that $(1,\ldots,1,s,1,\ldots,1)\in\psi(N)$ for every
$s\in S$, where $s$ appears always at the same position in the tuple.
Then $L\times \cdots \times L \subseteq \psi(N)$.
\end{lem}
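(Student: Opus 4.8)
The plan is to show that $\psi(N)$ contains the copy of $L$ supported at each single coordinate, and then assemble these into the full direct product. Write $\ell^{(j)}$ for the tuple $(1,\ldots,1,\ell,1,\ldots,1)$ having $\ell$ in the $j$th coordinate and $1$ elsewhere, and set $L^{(j)}=\{\ell^{(j)}\mid \ell\in L\}$. Since tuples supported at distinct coordinates commute and $\psi(N)$ is a subgroup of $\Aut(T)\wr S_X$, once we know $L^{(j)}\subseteq\psi(N)$ for every $j\in\{1,\ldots,p\}$ we get $L\times\cdots\times L=L^{(1)}\cdots L^{(p)}\subseteq\psi(N)$. Throughout I use that $N\trianglelefteq\mathcal{G}$, so that conjugating an element of $N$ by an element of $\mathcal{G}$ stays in $N$; concretely, for $n\in\St_{\mathcal{G}}(1)$ and $g\in\mathcal{G}$ the identity~\eqref{eqn:section conjugate} gives $\psi(n^g)=\psi(n)^{\psi(g)}$, the wreath-product conjugate of the tuple $\psi(n)$ by $\psi(g)$.

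First I would produce $L$ at the distinguished coordinate $k$ (the position occupied by each $s$ in the hypothesis). Fix $s\in S$ and let $n_s\in N$ satisfy $\psi(n_s)=s^{(k)}$; note $n_s\in\St_{\mathcal{G}}(1)$ as this tuple has trivial root label. Given an arbitrary $g\in\mathcal{G}$, fractality supplies $h\in\St_{\mathcal{G}}(x_k)$ with $\psi_{x_k}(h)=h_{x_k}=g$. Since $h$ fixes $x_k$, conjugation moves nothing off coordinate $k$ and only conjugates the entry there by $h_{x_k}$, so $\psi(n_s^{\,h})=(s^g)^{(k)}\in\psi(N)$. Letting $s$ range over $S$ and $g$ over $\mathcal{G}$, the elements $(s^g)^{(k)}$ generate exactly $(\langle S\rangle^{\mathcal{G}})^{(k)}=L^{(k)}$ inside the subgroup $\psi(N)$; hence $L^{(k)}\subseteq\psi(N)$.

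Next I would spread $L$ to the remaining coordinates using level-transitivity. For each $j$ choose $t\in\mathcal{G}$ with $x_k^{\,t}=x_j$; conjugating $\ell^{(k)}$ by $\psi(t)$ shifts its support from coordinate $k$ to coordinate $j$ and conjugates the entry by the section $t_{x_k}$, so that $(\ell^{\,t_{x_k}})^{(j)}\in\psi(N)$ for all $\ell\in L$. The key point is that $t_{x_k}$ normalises $L$: since $\mathcal{G}$ is self-similar, $t_{x_k}\in\mathcal{G}$, and $L\trianglelefteq\mathcal{G}$. Therefore $\{\ell^{\,t_{x_k}}\mid\ell\in L\}=L$, whence $L^{(j)}\subseteq\psi(N)$ for every $j$, and the reduction of the first paragraph completes the proof.

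I expect the main obstacle to be precisely the section bookkeeping in this last step: conjugating by a level-transitive element $t$ does not merely translate the support but simultaneously twists the entry by $t_{x_k}$, which a priori need not preserve $L$. This is why it is essential that such sections lie in $\mathcal{G}$, so that, $L$ being normal, the twist is absorbed; by contrast, Step~1 is clean because there the conjugating elements are taken inside $\St_{\mathcal{G}}(x_k)$, whose sections at $x_k$ realise all of $\mathcal{G}$ by fractality. The two wreath-product computations $\psi(n_s^{\,h})=(s^g)^{(k)}$ and $\psi(m^t)=(\ell^{\,t_{x_k}})^{(j)}$ should be verified directly from~\eqref{eqn:section conjugate}, but these are routine.
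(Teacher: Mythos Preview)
The paper does not actually prove this lemma; it refers to \cite[Prop.~2.18]{FAZR} and asserts that the same argument works for level-transitive fractal groups. Your two-step approach---use fractality at the distinguished coordinate~$k$ to produce all of $L^{(k)}$ inside $\psi(N)$, then conjugate by elements permuting the first level to transport $L$ to every other coordinate---is exactly that standard argument, and Step~1 is carried out correctly.

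There is, however, one genuine gap in Step~2. You write ``since $\mathcal{G}$ is self-similar, $t_{x_k}\in\mathcal{G}$'' in order to conclude that the twist $\ell\mapsto\ell^{\,t_{x_k}}$ sends $L$ onto itself. But the lemma, as stated in the paper, assumes only that $\mathcal{G}$ is fractal, and in this paper \emph{self-similar} and \emph{fractal} are introduced as separate notions: fractality controls the sections of \emph{vertex stabilisers} at the vertex they fix, not the sections of an arbitrary $t\in\mathcal{G}$ at a vertex it moves. Without self-similarity the section $t_{x_k}$ need not lie in $\mathcal{G}$, hence need not normalise $L$, and your conclusion $L^{t_{x_k}}=L$ is unjustified; varying $t$ over all elements with $x_k^{\,t}=x_j$ does not help, since one checks that $\{r_{x_k}: x_k^{\,r}=x_j\}$ is the single right coset $\mathcal{G}\,t_{x_k}$, so the image is always $(L^{t_{x_k}})^{(j)}$. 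In practice this is harmless---every application in the paper is to the self-similar $p$-Basilica group, and the cited source \cite{FAZR} treats self-similar GGS-groups---so the correct fix is simply to add self-similarity to the hypotheses, or to flag explicitly that you are using it at this point.
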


\begin{thm}
\label{thm:weakly-branch}
Let $G$ be a $p$-Basilica group, for a prime $p$.
Then:
\begin{enumerate}
\item
$\Rist_G(1)=A$ with $\psi(A)=B\times \cdots \times B$.
In particular, the group~$G$ is not branch.
\item
$\psi(\St_G(1)')=G'\times \cdots \times G'$.
As a consequence, the group~$G$ is weakly regular branch over~$G'$. 
\end{enumerate}
\end{thm}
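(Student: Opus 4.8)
I would treat the two parts in turn, in each case first determining the image under $\psi$ of the relevant subgroup. For part (i), the inclusion $\psi(A)\subseteq B\times\cdots\times B$ of~(\ref{psiA}) should be upgraded to an equality: since $A=\langle a\rangle^G$ and $\psi(a)=(1,\dots,1,b)$, the slice $(1,\dots,1,b)$ lies in $\psi(A)$, so Lemma~\ref{lem:getting-direct-product} applied with $\mathcal{G}=G$, $L=B=\langle b\rangle^G$ and $N=A$ gives $B\times\cdots\times B\subseteq\psi(A)$, whence $\psi(A)=B\times\cdots\times B$. The inclusion $A\subseteq\Rist_G(1)$ is then immediate: as $\psi$ is injective on $\St_G(1)$, for each $i$ there is a unique element of $A$ mapping to the slice $(1,\dots,b_i,\dots,1)$, and it lies in $\Rist_G(x_i)$; hence every element of $A$ factors as a product over $\prod_i\Rist_G(x_i)=\Rist_G(1)$.

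The reverse inclusion $\Rist_G(1)\subseteq A$ is where Theorem~\ref{thm:G/G'} is used. Any $g\in\Rist_G(1)$ lies in $\St_G(1)=A\langle b^p\rangle$, so $g=a'(b^p)^k$ with $a'\in A$; using $\psi(A)=B\times\cdots\times B$ and $\psi(b^p)=(a,\dots,a)$ we get $\psi(g)=(b_1a^k,\dots,b_pa^k)$ with each $b_i\in B$. If $g\in\Rist_G(x_i)$, then $b_ja^k=1$ for every $j\neq i$, so $a^{-k}=b_j\in A\cap B=G'$; since $A/G'=\langle aG'\rangle$ is infinite cyclic, this forces $k=0$ and $g=a'\in A$. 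Therefore $\Rist_G(x_i)\subseteq A$ for all $i$, giving $\Rist_G(1)=A$. As $|G:\Rist_G(1)|=|G:A|=\infty$ by Theorem~\ref{thm:G/G'}(i), the group $G$ is not branch.

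For part (ii), I would first record that $\psi(\St_G(1))=\langle B\times\cdots\times B,\ (a,\dots,a)\rangle$, which follows from Lemma~\ref{lem: stabiliser}(i) together with $\psi(A)=B\times\cdots\times B$ and $\psi(b^p)=(a,\dots,a)$. Projecting coordinatewise onto $(G/G')^p$, the image of $\psi(\St_G(1))$ is generated by the images of $B\times\cdots\times B$ and of $(a,\dots,a)$ and so is abelian; hence the image of its derived subgroup is trivial, giving $\psi(\St_G(1)')\subseteq G'\times\cdots\times G'$. For the reverse inclusion I would exhibit a single-coordinate generator of $G'$: choosing $h\in A$ with $\psi(h)=(1,\dots,b,\dots,1)$, with $b$ in position $j$, the element $[h,b^p]\in\St_G(1)'$ satisfies $\psi([h,b^p])=[(1,\dots,b,\dots,1),(a,\dots,a)]=(1,\dots,[b,a],\dots,1)$. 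Since $G'=\langle[a,b]\rangle^G$ and $[b,a]=[a,b]^{-1}$, Lemma~\ref{lem:getting-direct-product} with $L=G'$ and $N=\St_G(1)'$ yields $G'\times\cdots\times G'\subseteq\psi(\St_G(1)')$, completing the equality $\psi(\St_G(1)')=G'\times\cdots\times G'$.

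The weakly regular branch conclusion then follows at once: since $\St_G(1)'\le G'$ we have $G'\times\cdots\times G'=\psi(\St_G(1)')\subseteq\psi(G')$, and $G'\neq 1$ as $G$ is nonabelian, so $G$ is weakly regular branch over $G'$. I expect the crux to be the forward inclusion in part~(ii): everything rests on producing a genuine single-coordinate element of $\St_G(1)'$ that projects to a generator of $G'$, and the commutator $[h,b^p]$ with $h\in A$ supported in one coordinate is the device that makes this work. By comparison, the coordinatewise-abelianisation argument for the reverse inclusion, and the $A\cap B=G'$ cancellation in part~(i), are routine once the structure supplied by Theorem~\ref{thm:G/G'} and Lemma~\ref{lem: stabiliser} is available.
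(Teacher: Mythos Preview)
Your proof is correct and follows essentially the same strategy as the paper. The only organisational differences are minor: for $\Rist_G(1)\subseteq A$ you argue coordinatewise via $A\cap B=G'$ and the infinite order of $aG'$, whereas the paper uses that $\St_G(1)/A$ is infinite cyclic (so $\Rist_G(1)=A\langle b^{pn}\rangle$ for some $n$) together with the infinite order of $a$ modulo $B$; and for the inclusion $\psi(\St_G(1)')\subseteq G'\times\cdots\times G'$ you give an abelianisation argument, while the paper simply notes that self-similarity gives $\psi(\St_G(1))\subseteq G\times\cdots\times G$, so its derived subgroup lands in $G'\times\cdots\times G'$.
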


\begin{proof}
(i)
We already know from (\ref{psiA}) that $\psi(A)\subseteq B\times \cdots \times B$, and the reverse inclusion follows from Lemma \ref{lem:getting-direct-product}, since $\psi(a)=(1,\ldots,1,b)$.
Hence $A\le \Rist_G(1)\le \St_G(1)=A\langle b^p \rangle$ and so
$\Rist_G(1)=A\langle b^{pn} \rangle$ for some $n$.
Since 
\[
\psi(A\langle b^{pn} \rangle)=(B\times \cdots \times B)\langle (a^n,\ldots,a^n) \rangle
\]
and $a$ has infinite order modulo $B$ by Theorem~\ref{thm:G/G'}(i), it follows from the definition of rigid stabiliser that $n=0$.
Hence $\Rist_G(1)=A$, which has infinite index in $A$, so $G$ is not branch.

(ii)
The inclusion $\subseteq$ is clear.
For the reverse inclusion, observe that $\psi([b^{p},a])=(1,\ldots,1,[a,b])$.
Since $G'=\langle [a,b]\rangle^{G}$, the result follows from Lemma \ref{lem:getting-direct-product}.
\end{proof}

\begin{thm}
\label{thm:torsion-free}
Let $\mathcal{G}$ be a self-similar subgroup of $\Aut(T)$ and suppose that there exists a torsion-free quotient $\mathcal{G}/N$ with $N\le \St_{\mathcal{G}}(1)$.
Then $\mathcal{G}$ is torsion-free.
In particular, the $p$-Basilica groups are torsion-free.
\end{thm}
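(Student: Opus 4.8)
The plan is to prove the general statement and then apply it to the $p$-Basilica groups, since the ``In particular'' clause follows immediately from the general result combined with Theorem~\ref{thm:G/G'}(i): that theorem shows $a$ and $b$ have infinite order, so $G/G'$ is torsion-free, and we have $G'\le \St_G(1)$ because commutators of elements of $\St_G(1)$ fix the first level, and more to the point $G'\le\St_G(1)$ since $G/\St_G(1)\cong C_p$ is abelian. Thus taking $N=G'$ gives a torsion-free quotient $G/N$ with $N\le\St_{\mathcal G}(1)$, as required.

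For the general statement, suppose toward a contradiction that $\mathcal G$ has a nontrivial torsion element, and choose $g\in\mathcal G$ of prime order $q$ (by passing to a suitable power we may assume the order is prime). The key idea is a descent argument on the level at which $g$ first acts nontrivially, using self-similarity to replace $g$ by a section of the same order. First I would observe that since $\mathcal G/N$ is torsion-free and $g^q=1$, the image of $g$ in $\mathcal G/N$ is trivial, so $g\in N\le\St_{\mathcal G}(1)$. Hence $g$ fixes the first level, and $\psi_1(g)=(g_{x_1},\dots,g_{x_p})$ is a tuple of sections $g_{x_i}\in\mathcal G$ (self-similarity). Since $g$ stabilises the first level, $g^q=1$ forces each $g_{x_i}^q=1$, so every section is again a torsion element of $\mathcal G$.

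The heart of the argument is to show that passing to sections strictly decreases some complexity measure unless $g$ is already trivial. I would argue as follows: let $n\ge 1$ be minimal such that $g$ acts nontrivially at some vertex of level $n$, i.e.\ $g\in\St_{\mathcal G}(n-1)\setminus\St_{\mathcal G}(n)$ (if no such $n$ exists then $g=1$). Because $g\in N\le\St_{\mathcal G}(1)$ we have $n\ge 2$, and $g$ acts trivially on level $n-1\ge 1$. Consider a section $h=g_u$ at a vertex $u$ of level~$n-1$ for which $h\notin\St_{\mathcal G}(1)$; such a $u$ exists by the choice of $n$, and $h$ is a torsion element of $\mathcal G$ by the remark above. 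But then the image of $h$ in $\mathcal G/N$ is nontrivial, because $h\notin\St_{\mathcal G}(1)\supseteq N$, contradicting that $\mathcal G/N$ is torsion-free. This contradiction shows no nontrivial torsion element exists.

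The main obstacle is making the descent rigorous: one must ensure that the section one extracts genuinely lies outside $\St_{\mathcal G}(1)$ (equivalently, outside $N$) so that it maps to a nontrivial torsion element of the torsion-free quotient $\mathcal G/N$. The clean way to do this is to track the first level at which $g$ acts: if $g$ first acts nontrivially at level $n$, then some section at level $n-1$ acts nontrivially at level~$1$, hence does not lie in $\St_{\mathcal G}(1)$, and this section still has finite order by self-similarity. Care is needed to confirm that sections of a first-level stabiliser at level-one vertices preserve the finite order (which follows since $\psi_1$ is an injective homomorphism on $\St_{\mathcal G}(1)$), and that self-similarity indeed places these sections back inside $\mathcal G$. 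Once these points are secured, the contradiction with torsion-freeness of $\mathcal G/N$ closes the argument.
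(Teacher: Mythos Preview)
Your proposal is correct and follows essentially the same approach as the paper: both arguments locate the first level at which a putative torsion element acts nontrivially and pass to a section to produce a torsion element outside $\St_{\mathcal G}(1)\supseteq N$, contradicting torsion-freeness of $\mathcal G/N$. The only cosmetic difference is that the paper minimises $n$ over \emph{all} torsion elements and descends one level at a time via $\psi_1$, whereas you fix $g$ and jump directly to a section at level $n-1$; just be sure to justify that step by noting $\psi_{n-1}$ is a homomorphism on $\St_{\mathcal G}(n-1)$ (or iterate your level-$1$ remark), since your ``remark above'' as written only covers first-level sections.
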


\begin{proof}
For every $n\in\N\cup\{0\}$, let $S_n$ stand for the set of torsion elements in
$\St_{\mathcal{G}}(n)\backslash \St_{\mathcal{G}}(n+1)$.
Then our goal is to prove that these sets are all empty.
By way of contradiction, suppose that $S_n\ne\varnothing$ for some $n$, which we choose as small as possible.

Since $\mathcal{G}/N$ is torsion-free and $N\le \St_{\mathcal{G}}(1)$, it is clear that $n\ge 1$.
Let $g\in S_n$ be a torsion element.
If $\psi(g)=(g_1,\ldots,g_p)$ then some $g_i$ belongs to
$\St_{\mathcal{G}}(n-1)\backslash \St_{\mathcal{G}}(n)$ and, of course, the element~$g_i$ is of finite order.
So $g_i\in S_{n-1}$, which is a contradiction.

The case of the $p$-Basilica groups follows from Theorem \ref{thm:G/G'}, since $G'\le \St_G(1)$.
\end{proof}
 
Next we prove that the $p$-Basilica groups are contracting.
Let us briefly recall this concept.
If~$\mathcal{G}$ is an arbitrary group generated by a symmetric subset~$S$
(i.e.\ a set for which $S = S^{-1}$), then for every $g \in \mathcal{G}$, 
\[
|g| = \min\{n \geq 0 \mid g = s_{1} \cdots s_{n}, \text{ for }s_{1}, \dots, s_{n} \in S\}
\]
is called the \textit{length} of~$g$ with respect to $S$.
Now assume that $\mathcal{G}$ is a self-similar subgroup of $\Aut(T)$.
Then for every $g\in \mathcal{G}$ and every $n\in\N\cup\{0\}$ we define
\[
\ell_n(g) = \max \{ |g_u| \mid u\in L_n \}.
\]
From the rule $(gh)_u=g_u h_{u^g}$ we get that the function $\ell_n$ is subadditive, i.e.\ that
\begin{equation}
\label{eqn:subadditive}
\ell_n(gh)\le \ell_n(g)+\ell_n(h)
\quad
\text{for every $g,h\in\mathcal{G}$,}
\end{equation}
and from $(g^{-1})_u=(g_{u^{g^{-1}}})^{-1}$, that
\[
\ell_n(g^{-1}) = \ell_n(g)
\quad
\text{for every $g\in\mathcal{G}$.}
\]
If there exist $\lambda<1$ and $C,L\in\mathbb{N}$ such that
\[
\ell_n(g) \le \lambda |g|+C,
\quad
\text{for every $n>L$ and every $g\in \mathcal{G}$,}
\]
then we say that the group $\mathcal{G}$ is \emph{contracting} with respect to $S$.

The following lemma is straightforward, but very useful in proving that a subgroup of $\Aut(T)$ is
contracting.

\begin{lem}
\label{lem:reduction contracting}
Let $\mathcal{G}=\langle S \rangle$ be a self-similar subgroup of $\Aut(T)$, where $S$ is symmetric
and suppose that $\ell_1(s)\le 1$ for all $s\in S$.
Then $\ell_n(g) \le \ell_{n-1}(g)$ for every $n\in\N$ and $g\in\mathcal{G}$.
\end{lem}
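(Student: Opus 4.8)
The plan is to prove Lemma~\ref{lem:reduction contracting} by a direct computation using the self-similarity of $\mathcal{G}$ together with the cocycle rule for sections. The key observation is that when we pass from level $n$ to level $n-1$, each section at level $n$ is itself a first-level section of a section at level $n-1$, via the identity $g_{uv}=(g_u)_v$. So the whole argument reduces to controlling how $\ell_1$ behaves under multiplication, and the hypothesis $\ell_1(s)\le 1$ for every generator $s\in S$ is exactly what makes this work.

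First I would fix $g\in\mathcal{G}$ and write $g=s_1\cdots s_m$ as a shortest word in the symmetric generating set $S$, so that $m=|g|$. The subadditivity of $\ell_1$ from~\eqref{eqn:subadditive} together with the hypothesis gives $\ell_1(g)\le\sum_{i=1}^m \ell_1(s_i)\le m=|g|$. More importantly, I would argue that for \emph{any} $h\in\mathcal{G}$ and any first-level vertex $x$, the section $h_x$ satisfies $|h_x|\le |h|$: writing $h=t_1\cdots t_k$ as a shortest word (with $k=|h|$) and applying the section-of-a-product rule $(t_1\cdots t_k)_x=(t_1)_x (t_2)_{x^{t_1}}\cdots(t_k)_{x^{t_1\cdots t_{k-1}}}$, each factor is a first-level section of a generator, hence by hypothesis has length at most $1$, so $|h_x|\le k=|h|$. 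This is the crux: first-level sections do not increase word length, precisely because $\ell_1(s)\le 1$ on generators.

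Next I would leverage this to compare $\ell_n$ with $\ell_{n-1}$. Take any vertex $u\in L_n$ and write $u=vx$ with $v\in L_{n-1}$ and $x\in X$. Then $g_u=g_{vx}=(g_v)_x$, and applying the bound from the previous paragraph to $h=g_v$ gives $|g_u|=|(g_v)_x|\le|g_v|\le\ell_{n-1}(g)$. Taking the maximum over all $u\in L_n$ yields $\ell_n(g)\le\ell_{n-1}(g)$, which is exactly the claim. The case $n=1$ is covered by the same computation with $v=\varnothing$, since $g_\varnothing=g$ and $\ell_0(g)=|g|\ge\ell_1(g)$.

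The only real obstacle is getting the bookkeeping right for the section of a product, in particular that each factor $(t_i)_{x^{\,\cdot}}$ is a section of a single generator at a single first-level vertex and therefore has length at most $1$ by hypothesis; once that is pinned down the inequalities chain together immediately. I expect no serious difficulty here, as the statement is genuinely elementary, but I would be careful to state the section-of-a-product formula explicitly and to note that it is the symmetry of $S$ that allows us to use the same length bound for $g$ and $g^{-1}$, keeping everything consistent with the definitions of $|\cdot|$ and $\ell_n$ given above.
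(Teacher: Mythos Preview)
Your proposal is correct and follows essentially the same approach as the paper: both first deduce from subadditivity and the hypothesis that first-level sections do not increase word length (equivalently $\ell_1(h)\le|h|$ for all $h$), and then chain this through the factorisation $g_u=(g_v)_x$ with $u=vx$ to obtain $|g_u|\le|g_v|\le\ell_{n-1}(g)$. Your write-up is a bit more explicit about the section-of-a-product formula, but the argument is identical in substance.
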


\begin{proof}
Observe that the condition $\ell_1(s)\le 1$ for all $s\in S$, together with
(\ref{eqn:subadditive}), imply that $\ell_1(g)\le |g|$ for every $g\in\mathcal{G}$.
Now let $u\in L_n$ and write $u=vx$ with $v\in L_{n-1}$ and $x\in X$.
Then for every $g\in\mathcal{G}$ we have
\[
|g_u| = |(g_v)_x| \le \ell_1(g_v) \le |g_v| \le \ell_{n-1}(g),
\]
and the result follows.
\end{proof}

Observe that the condition $\ell_1(s)\le 1$ for all $s\in S$ is satisfied by the set of generators
$S=\{a,b,a^{-1},b^{-1}\}$ in a $p$-Basilica group.

\begin{thm}
\label{thm: contracting}
For $p$ a prime, the $p$-Basilica group $G$ is contracting with respect to the set of generators
$S=\{a,b,a^{-1},b^{-1}\}$, with $\lambda=\frac{2}{3}$ and $C=L=1$.
\end{thm}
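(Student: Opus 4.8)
As observed just before the statement, the generating set $S=\{a,b,a^{-1},b^{-1}\}$ satisfies $\ell_1(s)\le 1$ for every $s\in S$, so Lemma~\ref{lem:reduction contracting} applies and gives $\ell_n(g)\le\ell_{n-1}(g)$ for every $n\in\N$ and every $g\in G$. Hence the sequence $(\ell_n(g))_{n}$ is non-increasing in~$n$, and for every $n>L=1$, that is, for every $n\ge 2$, we have $\ell_n(g)\le\ell_2(g)$. It therefore suffices to establish the single inequality
\[
\ell_2(g)\le\tfrac{2}{3}\,|g|+1
\qquad\text{for all }g\in G,
\]
which immediately yields that $G$ is contracting with $\lambda=\tfrac23$ and $C=L=1$. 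This is what I would prove.

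\textbf{Tracking content to a fixed level-two vertex.}
Fix a vertex $u=x_ix_j\in L_2$ and a geodesic expression $g=s_1\cdots s_n$ with $s_k\in S$ and $n=|g|$. The plan is to bound $|g_u|=|(g_{x_i})_{x_j}|$ by counting, letter by letter, the contributions reaching this section, using $\psi(a)=(1,\dots,1,b)$ and $\psi(b)=(1,\dots,1,a)\sigma$ together with the section rule $(gh)_v=g_v\,h_{v^g}$. The decisive structural feature is the alternation of the generators: an $a^{\pm1}$-letter produces a single $b^{\pm1}$ one level below, while a $b^{\pm1}$-letter produces a single $a^{\pm1}$. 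Iterating once more, the $a$-content of $g_u$ can originate only from the $a^{\pm1}$-letters of~$g$ and its $b$-content only from the $b^{\pm1}$-letters of~$g$. Consequently $|g_u|$ is bounded by the number of $a$-letters of~$g$ whose content is routed to~$u$ plus the number of $b$-letters so routed.

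\textbf{The concentration estimate (the crux).}
What remains is to show that no single vertex $u$ can absorb too large a proportion of the letters. The routing of a letter's content is governed by the level-one positions visited, which are controlled by $\sigma^{c}$, where $c$ is the running signed count of $b^{\pm1}$-letters read so far: consecutive like-letters between two $b$'s are sent to a common section, whereas a block of $b$'s disperses its contributions among the $p$ children via~$\sigma$, powers of~$b$ contracting through $\psi(b^p)=(a,\dots,a)$. Balancing the portion of the length forced to disperse against the portion that can be concentrated—taking into account that mixed signs (patterns such as $bb^{-1}$) defeat the $\sigma$-dispersal and constitute the genuinely extremal case—yields $|g_u|\le\tfrac23|g|+1$ for every $u\in L_2$, and hence $\ell_2(g)\le\tfrac23|g|+1$. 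I expect this worst-case routing analysis to be the main obstacle: one must rule out that the $a$-content and the $b$-content accumulate at the \emph{same} level-two vertex beyond the claimed proportion, and check that the additive slack of~$1$ already suffices for the short words, where the asymptotic dispersal has not yet taken hold.
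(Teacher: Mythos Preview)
Your reduction to the inequality $\ell_2(g)\le\frac{2}{3}|g|+1$ via Lemma~\ref{lem:reduction contracting} is correct and matches the paper exactly. However, from that point on your argument has a genuine gap: the ``concentration estimate'' is asserted, not proved. You correctly observe that after two levels the $a$-letters of~$g$ feed the $a$-content of the sections and the $b$-letters feed the $b$-content, but you never establish that at most a $\tfrac{2}{3}$-fraction (plus one) can land at a \emph{single} level-two vertex. The hand-wave about ``balancing the portion of the length forced to disperse against the portion that can be concentrated'' is precisely the whole theorem, and you yourself flag it as ``the main obstacle''. A global routing analysis of this kind is delicate: words like $a^n$ concentrate fully at level~$1$ before dispersing, while mixed-sign patterns can defeat the $\sigma$-dispersal you invoke, so turning this intuition into a sharp bound with the exact constant~$\tfrac{2}{3}$ and additive term~$1$ requires real work that is absent from your sketch.

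The paper bypasses the global routing analysis entirely with a short local argument. By subadditivity of $\ell_2$, it suffices to show $\ell_2(h)\le 2$ for every word~$h$ of length~$3$: writing $|g|=3j+k$ with $k\in\{0,1,2\}$ and $g=h_1\cdots h_j f$ with each $|h_i|=3$ and $|f|=k$ gives $\ell_2(g)\le 2j+k\le\frac{2}{3}|g|+1$. For the length-$3$ bound, a direct check shows $\ell_1(g)\le 1$ for $g\in\{b^2,ba,ba^{-1}\}$, hence (using $\ell_2\le\ell_1$ and inversion/left-multiplication symmetry) $\ell_2(g)\le 1$ for every length-$2$ word except $a^{-1}b$ and $b^{-1}a$. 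Given any $h$ of length~$3$, one of $h$ or $h^{-1}$ does not begin with $a^{-1}b$ or $b^{-1}a$; splitting that one as a length-$2$ prefix times a generator gives $\ell_2(h)\le 1+1=2$. This finite, case-free argument is both simpler and sharper than a dispersal analysis, and it is what you are missing.
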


\begin{proof}
By Lemma \ref{lem:reduction contracting}, it suffices to prove that
\begin{equation}
\label{contracting level 2}
\ell_2(g)\le \frac{2}{3}|g|+1, \quad \text{for every $g\in G$.}
\end{equation}
Assume that we know that $\ell_2(h)\le 2$ for every $h\in G$ of length $3$.
Then we write $|g|=3j+k$ with $k\in\{0,1,2\}$ and
$g=h_1\cdots h_j f$ with $|h_1|=\cdots=|h_j|=3$ and $|f|=k$, and (\ref{contracting level 2}) immediately follows from the subadditivity of $\ell_2$.

Let us then consider an arbitrary element $h\in G$ of length $3$ and prove that $\ell_2(h)\le 2$.
An easy calculation shows that $\ell_1(g)\le 1$ for $g\in\{b^2,ba,ba^{-1}\}$ and, as a consequence,
$\ell_2(g)\le 1$ for every $g$ of length $2$ other than $a^{-1}b$ and $b^{-1}a$.
Now one of the elements $h$ or $h^{-1}$, call it $h^*$, does not contain $a^{-1}b$ or $b^{-1}a$ as a prefix.
If we write $h^*=gs$ with $g$ of length $2$, then
\[
\ell_2(h) = \ell_2(h^*) \le \ell_2(g) + \ell_2(s) \le 2,
\]
which completes the proof.
\end{proof}

\begin{cor}\label{co:wod-problem}
Let $G$ be a $p$-Basilica group, for a prime $p$.
Then $G$ has solvable word problem.
\end{cor}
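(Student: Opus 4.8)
The plan is to deduce solvable word problem directly from the contracting property established in \cref{thm: contracting}, together with the self-similarity and level-transitivity of~$G$. The key observation is that a word $w$ in the generators $a,b,a^{-1},b^{-1}$ represents the identity in $G$ if and only if the automorphism it defines acts trivially on~$T$, and because $G$ is contracting, the lengths of the sections $g_u$ shrink (after passing below level~$L=1$) by a factor bounded away from~$1$. This means that after finitely many descents, every section has length bounded by the constant $\frac{C}{1-\lambda}$, at which point triviality can be checked against a finite precomputed list.

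First I would make precise the standard algorithm for contracting self-similar groups. Given a word $w$ of length $m=|w|$ in $S=\{a,b,a^{-1},b^{-1}\}$, I compute the element $g\in G$ it represents as an automorphism, and I wish to decide whether $g=1$. Since $G\le\Gamma$ acts faithfully on~$T$, and $g=1$ iff $g$ fixes every vertex and has trivial label everywhere, it suffices to check: (a) that $g$ has trivial label at the root (i.e.\ $g\in\St_G(1)$, which is read off from the exponent sum of $b$ modulo~$p$, using $G_1\cong C_p$ from \cref{lem: stabiliser}(i)); and (b) that each of the $p$ first-level sections $g_{x_1},\ldots,g_{x_p}$ represents the identity. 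By \cref{thm: contracting} with $\lambda=\tfrac23$ and $C=L=1$, for $n>1$ we have $\ell_n(g)\le\tfrac23|g|+1$, so the sections one level down are words whose lengths are at most $\tfrac23 m+1$. I would iterate this reduction: the lengths of the words appearing as sections at level~$n$ satisfy a recursion $m_{n+1}\le\tfrac23 m_n+1$, whose fixed point is~$3$, so after finitely many levels (explicitly, once $\tfrac23 m_n+1<m_n$, i.e.\ as soon as $m_n>3$) the lengths stop growing and are eventually bounded by~$3$.

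Concretely, I would argue that one only ever needs to examine sections of length at most some fixed bound $M$ (one may take $M=3$, matching the base case treated in the proof of \cref{thm: contracting}). For words of length at most~$M$ there are only finitely many elements of $G$ to consider, and for each such short word one can decide triviality by checking its action on a bounded initial portion of the tree; alternatively one precomputes the finite set of sections of length $\le M$ that equal the identity. Thus the algorithm is: recursively compute the portrait of $g$ by descending the tree, at each stage replacing a section by its $p$ children (which are shorter words once we are below level~$1$), using the substitution rules $\psi(a)=(1,\ldots,1,b)$, $\psi(b)=(1,\ldots,1,a)\sigma$, and stopping along each branch once the section length drops to~$\le M$, where triviality is decided directly. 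Since the section length strictly decreases at each step until it reaches the bounded region, the recursion terminates, and $g=1$ iff every label produced is trivial.

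The main obstacle is bookkeeping rather than conceptual: one must verify that the length-reduction is genuinely \emph{strict} away from the bounded region, so that the recursion tree is finite, and that computing the first-level decomposition $\psi(g)=(g_{x_1},\ldots,g_{x_p})\sigma^j$ from a word $g$ is effective and does not increase total length uncontrollably. The subadditivity of $\ell_n$ in \cref{eqn:subadditive} and the explicit substitution rules for $a$ and $b$ handle the latter, since each generator contributes at most one generator to each section. The finiteness of the base-case check relies only on $G$ being a finitely generated subgroup of the profinite group $\Aut(T)$, so that membership in $\St_G(k)$ for fixed~$k$ is decidable by acting on $L_k$. Putting these together yields a terminating algorithm, which is precisely the statement that $G$ has solvable word problem.
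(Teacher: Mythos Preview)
Your proposal is correct and takes essentially the same approach as the paper: the paper's proof is the one-line citation of \cref{thm: contracting} together with \cite[Prop.~2.13.8]{Nekrashevych}, and what you have written is precisely an explicit unpacking of that standard algorithm for the word problem in contracting self-similar groups.
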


\begin{proof}
This follows from Theorem~\ref{thm: contracting} and \cite[Prop.~2.13.8]{Nekrashevych}.
\end{proof}

\section{Commutator subgroup structure}\label{sec:commutator}

In Section \ref{sec:first-properties} we determined the abelianisation of a $p$-Basilica group $G$ and proved that $G$ is weakly regular branch over $G'$.
Now we study further properties of $G'$ and of other subgroups obtained by taking commutators, and obtain some consequences.

In the following we will denote by $C$ and $D$ the following subgroups of
$G\times \overset{p}{\cdots} \times G$:
\begin{align*}
C&= \{ (b^{i_1},\ldots,b^{i_p}) \mid i_1+\cdots+i_p=0 \},\\
D&=\{(g_1,\ldots,g_p)\in G'\times \cdots \times G' \mid g_1\cdots g_p\in \gamma_3(G)\}.
\end{align*}
Since $b$ is of infinite order, the subgroup~$C$ is a free abelian group of rank $p-1$ generated by the elements 
\begin{equation}
\label{ci}
c_i=(1,\ldots,1,b^{-1},b,1,\overset{i}{\ldots},1),\quad \text{ for } i\in\{0,1,\ldots,p-2\}.
\end{equation}
Note further that for all $i\in \{0,\ldots,p-2\}$ we have
\begin{equation}
\label{ci commutator}
c_i=\psi([b^{-1},a]^{b^{-i}}).
\end{equation}
Also since
\begin{equation}
\label{eqn:product all ci}
c_0 c_1 \cdots c_{p-2} = (b^{-1},1,\overset{p-2}{\ldots},1,b),
\end{equation}
it is clear that $C$ is normalised by $\sigma$ in the wreath product $W_p(G)$.

We start our study of commutators by identifying the images of $G'$, $\gamma_3(G)$ and $G''$
under $\psi$.

\begin{thm}
\label{thm:semidirect}
Let $G$ be a $p$-Basilica group, for $p$ a prime.
 The following hold:
\begin{enumerate}
\item
$\psi(G')= (G' \times \cdots \times G') \rtimes C$.
\item
$\psi(G'')=\gamma_3(G) \times \cdots \times \gamma_3(G)$.
\item
$\psi(\gamma_3(G)) = \langle y_0,\ldots,y_{p-2} \rangle \ltimes  D$,
where $y_{0}=c_0^{\,-p}([a,b^{-1}],1,\ldots,1)$ and $y_i=c_{i-1}c_i^{-1}$ for $1\le i\le p-2$. 
\end{enumerate}
\end{thm}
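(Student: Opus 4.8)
The plan is to establish the three identifications in sequence, each building on the previous one, since $G''$ and $\gamma_3(G)$ are naturally described in terms of $G'$.

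\medskip

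\textbf{Part (i).}
First I would compute $\psi(G')$. Since $G'=\langle[a,b]\rangle^G$, it suffices to determine $\psi([a,b])$ and then understand how the normal closure interacts with $\psi$. A direct calculation from $\psi(a)=(1,\ldots,1,b)$ and $\psi(b)=(1,\ldots,1,a)\sigma$ gives $\psi([a,b])$ as an explicit tuple possibly multiplied by a power of $\sigma$; since $[a,b]\in\St_G(1)$, no $\sigma$ appears and we land in $G\times\cdots\times G$. The key structural fact is Theorem~\ref{thm:weakly-branch}(ii), which already tells us $\psi(\St_G(1)')=G'\times\cdots\times G'$; since $G'\le\St_G(1)$, we get $G'\times\cdots\times G'\subseteq\psi(G')$. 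The remaining task is to identify the ``diagonal'' part modulo this direct product, which is exactly the role of $C$. Using \eqref{ci commutator}, the elements $c_i=\psi([b^{-1},a]^{b^{-i}})$ lie in $\psi(G')$, and these generate $C$. So I would argue that $\psi(G')$ is generated by $G'\times\cdots\times G'$ together with $C$, that the product is semidirect because $C$ is complemented (its projection to $G/\St_G(1)$-type data is free abelian and meets the direct product trivially, as $b$ has infinite order by Theorem~\ref{thm:G/G'}(i)), and that $C$ normalises $G'\times\cdots\times G'$ since $C$ is normalised by $\sigma$ by \eqref{eqn:product all ci}.

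\medskip

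\textbf{Part (ii).}
For $\psi(G'')$, the inclusion $\psi(G'')\subseteq\gamma_3(G)\times\cdots\times\gamma_3(G)$ should follow from part (i): writing two elements of $G'$ via their $\psi$-images as $(\mathbf{g})\,c$ and $(\mathbf{h})\,c'$ with $\mathbf{g},\mathbf{h}\in G'\times\cdots\times G'$ and $c,c'\in C$, their commutator has coordinate-wise entries that are products of commutators of $G'$-elements with elements conjugated by $C$; since $C$ consists of tuples of powers of $b$, conjugating a coordinate in $G'$ by such a tuple keeps it in $G'$, and the commutators produced lie in $[G',G]=\gamma_3(G)$ in each coordinate. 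For the reverse inclusion I would apply Lemma~\ref{lem:getting-direct-product} with $\mathcal{G}=G$, $L=\gamma_3(G)=\langle[a,b,x]\mid\ldots\rangle^G$ and $N=G''$: it is enough to exhibit, for a generating commutator of $\gamma_3(G)$, a single-coordinate tuple $(1,\ldots,1,s,1,\ldots,1)\in\psi(G'')$. Concretely, commuting $c_i\in\psi(G')$ with an element of $\psi(G')$ whose only nontrivial coordinate is some $[a,b]$ produces precisely such single-coordinate elements of the form $[a,b,\,\text{power of }b]$, which normally generate $\gamma_3(G)$.

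\medskip

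\textbf{Part (iii).}
This is the delicate part, and I expect it to be the main obstacle. The subgroup $D$ is the ``index-$\gamma_3(G)$-in-the-product'' analogue of $C$ inside $G'\times\cdots\times G'$, so morally $\psi(\gamma_3(G))$ should sit between $\psi(G'')$ and $\psi(G')$ exactly as $\gamma_3(G)$ sits between $G''$ and $G'$. From part (ii) we have $\psi(G'')=\gamma_3(G)\times\cdots\times\gamma_3(G)\subseteq D$, so the product piece $D$ is accounted for; what must be pinned down is the complement $\langle y_0,\ldots,y_{p-2}\rangle$. Since $\gamma_3(G)=[G',G]$ is generated by conjugates of $[[a,b],a]$ and $[[a,b],b]$, I would compute the $\psi$-images of these two basic commutators directly using part (i): writing $\psi([a,b])$ in terms of a tuple times some $c\in C$, the further commutators with $a$ and $b$ produce the claimed generators $y_i$, where the specific shapes $y_0=c_0^{-p}([a,b^{-1}],1,\ldots,1)$ and $y_i=c_{i-1}c_i^{-1}$ come from how $\sigma$-conjugation permutes the $c_i$ and from the relation $\psi(b^p)=(a,\ldots,a)$ feeding the factor $c_0^{-p}$. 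The bookkeeping—tracking which coordinate receives which commutator, and verifying that the $y_i$ together with $D$ exhaust $\psi(\gamma_3(G))$ while intersecting $D$ trivially (again using the infinite order of $b$ and the freeness of $C$ of rank $p-1$)—is where the real work lies. I would close by checking that $\langle y_0,\ldots,y_{p-2}\rangle$ normalises $D$, which follows since $D\trianglelefteq\psi(G')$ and each $y_i\in\psi(\gamma_3(G))\subseteq\psi(G')$, giving the asserted semidirect decomposition.
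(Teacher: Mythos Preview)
Your outline captures the overall architecture, but there are two genuine gaps that would stall the argument.

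\textbf{Part (i), the inclusion $\psi(G')\subseteq (G'\times\cdots\times G')\,C$.} You establish the reverse inclusion cleanly, but you never explain why $\psi(G')$ cannot be larger. Saying ``I would argue that $\psi(G')$ is generated by $G'\times\cdots\times G'$ together with $C$'' is exactly the point at issue. The paper's argument is: $G'=\langle[b^{-1},a]\rangle^G$, so $\psi(G')$ is the normal closure of $c_0$ inside $\psi(G)\le W_p(G)$; one then checks that $(G'\times\cdots\times G')\,C$ is \emph{normal} in $W_p(G)$ (here is where you need both that $\sigma$ normalises $C$ and that $[C,\,G\times\cdots\times G]\le G'\times\cdots\times G'$), and since it contains $c_0$ the containment follows. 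Your remark that ``$C$ normalises $G'\times\cdots\times G'$ since $C$ is normalised by $\sigma$'' conflates two different normalising statements and does not give what is needed.

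\textbf{The identity $[b,a,a]=1$.} This short computation (immediate from $\psi(a)=(1,\ldots,1,b)$, which commutes with all its $\langle b\rangle$-conjugates except itself) is the engine behind both (ii) and (iii), and you omit it. In (ii) you assert that elements of the form $[a,b,\text{power of }b]$ normally generate $\gamma_3(G)$; that is true precisely because $[a,b,a]=1$, otherwise you would also need to produce single-coordinate tuples with entry $[a,b,a]$. In (iii) the same identity gives the crucial simplification $\gamma_3(G)=[G',b]\,G''$ (equivalently, $\psi(a)$ centralises $\psi(G')$ modulo $\psi(G'')$), so that $\psi(\gamma_3(G))=[\psi(G'),\psi(b)]\,D$. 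The paper then works in the free abelian group $\psi(G')/D$, with basis $c_0,\ldots,c_{p-2}$ and $z=([b,a],1,\ldots,1)$, and simply computes $[c_i,\psi(b)]$ and $[z,\psi(b)]$ modulo $D$; the generators $y_i$ drop out after one linear-algebra simplification. Your plan of computing $\psi([[a,b],a])$ and $\psi([[a,b],b])$ directly and then taking normal closures would eventually succeed, but without first noting $[a,b,a]=1$ the ``bookkeeping'' you flag becomes substantially heavier, and verifying that you have \emph{all} of $\psi(\gamma_3(G))$ (not just a subgroup) still requires an upper bound of the sort the paper obtains via $[G',b]G''$.
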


\begin{proof}
(i)
One inclusion is clear, taking into account that $G$ is weakly regular branch over $G'$, together with
(\ref{ci commutator}).
For the reverse inclusion, observe that $G'=\langle [b^{-1},a]\rangle^G$ implies
$\psi(G') \le \langle c_0 \rangle^{W_p(G)}$.
Now in the wreath product $W_p(G)$ the subgroup $G'\times \cdots \times G'$ is normal, the element~$\sigma$ normalises $C$ and $[C,G\times \cdots \times G]\le G'\times \cdots \times G'$.
Hence $(G'\times \cdots \times G')C\trianglelefteq W_p(G)$.
Since $c_0\in C$, the desired inclusion follows.

Finally, since $b$ has infinite order modulo $G'$ by Theorem \ref{thm:G/G'}(ii), we observe that the intersection of $G'\times \cdots \times G'$ and $C$ is trivial, and so their product is semidirect.

(ii)
We first show that $\gamma_3(G)\times \cdots \times \gamma_3(G)\le \psi(G'')$.
On the one hand, a straightforward computation shows that $[b,a,a]=1$, and then
$\gamma_3(G)=\langle [b,a,b]\rangle^G$.
Now since $G$ is weakly regular branch over $G'$, we have $(1,\ldots,1,[b,a])\in \psi(G')$, and on the other hand $(1,\ldots,1,b^{-1},b)=c_0\in \psi(G')$.
Hence $(1,\ldots,1,[b,a,b])\in \psi(G'')$ and the desired inclusion follows from Lemma
\ref{lem:getting-direct-product}.

To show the other inclusion, it is sufficient to prove that
$\psi(G')/(\gamma_3(G)\times \cdots \times \gamma_3(G))$ is abelian.
This is obvious from the expression for $\psi(G')$ obtained in (i).

(iii)
We have $\gamma_3(G)=[G',a][G',b]G''$ and consequently $\gamma_3(G)=[G',b]G''$
since $\psi(a)=(1,\ldots,1,b)$ clearly centralises $\psi(G')=(G'\times \cdots \times G')C$ modulo
$\psi(G'')=\gamma_3(G) \times \cdots \times \gamma_3(G)$.

Hence $\psi(\gamma_3(G))=[\psi(G'),\psi(b)](\gamma_3(G)\times \cdots \times \gamma_3(G))$.
Next consider the following subgroup of $\psi(\gamma_3(G))$,
\[
[G'\times \cdots \times G',\psi(b)] (\gamma_3(G)\times \cdots \times \gamma_3(G))
=
[G'\times \cdots \times G',\sigma] (\gamma_3(G)\times \cdots \times \gamma_3(G)),
\]
and observe that it corresponds to the commutator subgroup of the wreath product
$W_p(G'/\gamma_3(G))$, i.e.\ with the elements of the base group whose component-wise product is $1$ in $G'/\gamma_3(G)$.
Hence this subgroup coincides with $D$ and we have $\psi(\gamma_3(G))=[\psi(G'),\psi(b)]D$.

Now the factor group $\psi(G')/D$ is the direct product of the cyclic subgroups generated by $c_0,\ldots,c_{p-2}$ and by $z=([b,a],1,\ldots,1)$.
It is clear that $[c_i,\psi(b)]=y_i$ for $1\le i \le p-2$, that
\[
[c_0,\psi(b)] = c_{0}^{-1} (b,1,\ldots,1,b^{-1}) z = c_0^{-2} c_1^{-1}\cdots c_{p-3}^{-1} c_{p-2}^{-1} z,
\]
by using (\ref{eqn:product all ci}), and that $[z,\psi(b)]\in D$.
Hence
\[
\psi(\gamma_3(G))
=
\langle y_1,\ldots,y_{p-2},c_0^{-2} c_1^{-1}\cdots c_{p-3}^{-1} c_{p-2}^{-1}z \rangle D.
\]
Since
\[
c_0^{-2} c_1^{-1}\cdots c_{p-3}^{-1} c_{p-2}^{-1} y_{p-2}^{-1} y_{p-3}^{-2} \cdots y_1^{-(p-2)}
=
c_0^{\,-p},
\]
the result follows.
\end{proof}

\begin{cor}
Let $G$ be a $p$-Basilica group, for $p$ a prime.
Then the centraliser of~$G'$ in~$G$, and hence the centre of~$G$, is trivial. 
\end{cor}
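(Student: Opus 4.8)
The goal is to prove that $C_G(G')=Z(G)=1$. The plan is to show that any element $g \in G$ centralising $G'$ must be trivial, working through the section structure of $G$ and using the weakly regular branch property established in Theorem~\ref{thm:weakly-branch}, together with the explicit computation of $\psi(G')$ in Theorem~\ref{thm:semidirect}(i). Note that $Z(G) \le C_G(G')$ automatically, so it suffices to handle the centraliser of $G'$; the statement about $Z(G)$ follows immediately.

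First I would observe that since $G'$ is nontrivial, $C_G(G')$ is a proper treatment target, and I would reduce to showing $C_G(G') \cap \St_G(1)$ is trivial and that no element outside $\St_G(1)$ can centralise $G'$. Take $g \in C_G(G')$. Since $\psi(\St_G(1)')=G'\times\cdots\times G'$ by Theorem~\ref{thm:weakly-branch}(ii), the subgroup $G'$ contains elements acting as $G'$ in a single coordinate, namely elements of the form $\psi^{-1}(1,\ldots,1,t,1,\ldots,1)$ with $t \in G'$ at a chosen position. If $g \in \St_G(1)$ with $\psi(g)=(g_1,\ldots,g_p)$, then commuting $g$ with such coordinate-concentrated elements forces each $g_i$ to centralise $G'$, i.e.\ $g_i \in C_G(G')$. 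This produces a descent: a centralising element in $\St_G(1)$ has all its sections again centralising $G'$. The key point is that this descent, combined with the fact that $G$ is torsion-free (Theorem~\ref{thm:torsion-free}) and that $a,b$ have infinite order, should force $g=1$ by an argument tracking lengths or levels, since a nontrivial centralising element would yield an infinite descent of nontrivial sections.

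For $g \notin \St_G(1)$, I would use that $G/\St_G(1)\cong C_p$ is generated by $b\St_G(1)$, so $g \equiv b^k \St_G(1)$ for some $k \not\equiv 0 \pmod p$. Here the plan is to exhibit a specific commutator in $G'$ that $g$ fails to centralise: the action of $\sigma$ (coming from the top label of $b$) permutes the coordinates, so conjugating the coordinate-concentrated elements of $\psi(G'\times\cdots\times G')$ by $g$ cyclically shifts the nontrivial position. Since such an element and its cyclic shift are distinct in $G'\times\cdots\times G' \le \psi(G')$ (the coordinates being genuinely different, as $G'\ne 1$), we get $[g, \cdot]\ne 1$, contradicting $g \in C_G(G')$. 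Thus $C_G(G') \le \St_G(1)$, and the descent argument from the previous paragraph finishes the proof.

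The main obstacle I anticipate is making the descent argument rigorous: showing that an element $g\in\St_G(1)$ all of whose sections lie in $C_G(G')$ must itself be trivial. The cleanest route is probably to set up the statement ``$C_G(G')=1$'' as the claim to be proved and run an inductive/minimal-counterexample argument on the level at which $g$ becomes nontrivial, exactly in the spirit of the proof of Theorem~\ref{thm:torsion-free}: if $g\in C_G(G')$ is nontrivial and lies in $\St_G(n)\setminus\St_G(n+1)$ with $n$ minimal, then some section $g_u$ is a nontrivial element of $C_G(G')$ lying in a strictly lower level stabiliser, contradicting minimality, once one checks the base case $n=0$ via the $\St_G(1)$-reduction above. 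I would need to verify carefully that the sections of a centralising element remain centralising, which relies on the self-similarity of $G'$ encoded in $\psi(G')=(G'\times\cdots\times G')\rtimes C$.
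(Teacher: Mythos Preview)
Your proposal is correct and follows essentially the same approach as the paper: reduce to $C_G(G')\le \St_G(1)$ using a coordinate-concentrated element of $G'$, then observe that the first-level sections of a centralising element again centralise $G'$ (via $G'\times\cdots\times G'\subseteq\psi(G')$), and iterate to conclude $g\in\bigcap_n\St_G(n)=1$. The paper's write-up is slightly more economical: it uses the single explicit commutator $[b^p,a]$ with $\psi([b^p,a])=(1,\ldots,1,[a,b])$ to force the root label to be trivial, and then simply says ``repeating this process yields $g\in\St_G(n)$ for every $n$'' rather than framing the descent as a minimal-counterexample argument---but the content is the same.
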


\begin{proof}
Suppose that $g\in C_G(G')$ and let $\psi(g)=(g_1, \dots,g_p)\sigma^{k}$.
Since $g$ commutes with $[b^p,a]$ and $\psi([b^p,a])=(1, \dots, 1, [a,b])$, it follows that $p\mid k$.
Hence $g \in \St_G(1)$.
In view of Theorem~\ref{thm:semidirect}(i), all components $g_i$ also centralise $G'$ and consequently belong to $\St_G(1)$.
Repeating this process yields $g \in \St_G(n)$ for every $n\in\mathbb{N}$, and so $g=1$.
\end{proof}

For a group property~$\mathcal{P}$, recall that a group~$H$ is just non-$\mathcal{P}$ if $H$ does not have property~$\mathcal{P}$ but every proper quotient of~$H$ has~$\mathcal{P}$.

\begin{cor}
\label{cor: Just non solvable}
For a prime $p$, a $p$-Basilica group is just  non-solvable.
\end{cor}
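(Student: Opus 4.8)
The statement has two halves: that $G$ is not solvable, and that every proper quotient $G/N$ (with $1\neq N\trianglelefteq G$) is solvable. The plan is to dispose of the first half quickly using the branch structure, and to spend the real effort on the second, which is where that structure is genuinely exploited. For non-solvability, recall that $G$ is weakly branch by Theorem~\ref{thm:weakly-branch}, and weakly branch groups are never solvable (see~\cite{Handbook}); I would simply cite this. As a by-product of the second half one also sees directly that $G$ has no nontrivial abelian normal subgroup, which re-proves non-solvability, but invoking the known fact is cleaner.

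For the proper quotients, fix $1\neq N\trianglelefteq G$. The key input is the standard property of weakly branch groups that every nontrivial normal subgroup contains the derived subgroup of a rigid level stabiliser, i.e.\ there is $n$ with $\Rist_G(n)'\le N$ (cf.~\cite{Handbook}). Since $G$ is weakly regular branch over $G'$ (Theorem~\ref{thm:weakly-branch}), iterating gives $\psi_n^{-1}(G'\times\cdots\times G')\le\Rist_G(n)$; taking derived subgroups and using that $\psi_n$ is injective on $\St_G(n)$ yields $\psi_n^{-1}(G''\times\cdots\times G'')\le\Rist_G(n)'\le N$. It then suffices to prove that $G/\psi_n^{-1}(G''\times\cdots\times G'')$ is solvable, which follows from a two-step extension argument: the quotient $G/\St_G(n)$ is a finite $p$-group, hence solvable, while $\psi_n$ embeds $\St_G(n)/\psi_n^{-1}(G''\times\cdots\times G'')$ into $(G/G'')\times\cdots\times(G/G'')$, which is metabelian because $G/G''$ is. An extension of a solvable group by a solvable group is solvable, so $G/N$ is solvable.

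The main obstacle will be the normal-subgroup lemma $\Rist_G(n)'\le N$. Its proof rests on the usual branch-group commutator manipulation: starting from a single nontrivial $g\in N$, one produces an element of $N$ supported on one subtree, and then spreads it over an entire level using level-transitivity together with $\psi(\St_G(1)')=G'\times\cdots\times G'$ from Theorem~\ref{thm:weakly-branch}. This is standard for level-transitive weakly regular branch groups, so I would either reproduce the short argument in our setting or cite it. One also has to record that $G''\neq1$, which is immediate from $\psi(G'')=\gamma_3(G)\times\cdots\times\gamma_3(G)$ in Theorem~\ref{thm:semidirect} together with $\gamma_3(G)\neq1$ (guaranteed by $\gamma_3(G)/G''\cong\mathbb{Z}^{2p-2}$ in Theorem~\ref{thm:structural-results}); otherwise the containment $\psi_n^{-1}(G''\times\cdots\times G'')\le N$ would be vacuous. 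Everything beyond the normal-subgroup lemma is a routine extension computation.
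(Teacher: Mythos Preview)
Your argument is correct and takes a genuinely different, more self-contained route than the paper. The paper's proof is a two-line application of \cite[Lem.~10]{Zuk}: given that $G$ is weakly regular branch over $G'$ (Theorem~\ref{thm:weakly-branch}), that lemma reduces just non-solvability to checking that $G'/\St_G(1)'$ is solvable, and this is immediate since $\psi(\St_G(1)')=G'\times\cdots\times G'$ and $\psi(G')=(G'\times\cdots\times G')\rtimes C$ give $G'/\St_G(1)'\cong C$, which is abelian. Your approach instead unpacks the mechanism behind that criterion: you invoke the Grigorchuk-type lemma $\Rist_G(n)'\le N$, push $G''\times\cdots\times G''$ into $\psi_n(\Rist_G(n)')$ via weak regular branchness, and then bound the derived length of $G/\psi_n^{-1}(G''\times\cdots\times G'')$ by an explicit extension argument. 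This is longer but avoids the external black box and makes transparent exactly where self-similarity and the branch structure enter. One small remark: your verification that $G''\ne 1$ via $\gamma_3(G)/G''\cong\mathbb{Z}^{2p-2}$ points to Theorem~\ref{thm:key}, which in the paper's ordering comes \emph{after} this corollary; it is tidier simply to note that $G''=1$ would make $G$ metabelian, contradicting the non-solvability you have already established from weak branchness.
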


\begin{proof} 
Using Theorem~\ref{thm:weakly-branch}, it suffices to show by~\cite[Lem.~10]{Zuk} that
$G'/\St_G(1)'$ is solvable.
Now from Theorem~\ref{thm:weakly-branch}(ii) and Theorem~\ref{thm:semidirect}(i) it is immediate that $G'/\St_G(1)'\cong C$ is abelian. 
\end{proof}

Next we give important information about the congruence quotient $G_n$: the orders of the images of $a$ and $b$, and the structure of its abelianisation.
In the remainder, let $\beta(n)=\lceil \nicefrac{n}{2} \rceil$.

\begin{thm}
\label{thm:n-order}
Let $G$ be a $p$-Basilica group, for a prime~$p$.
Then, for every $n\in\N$, we have:
\begin{enumerate}
\item
The orders of $a$ and $b$ modulo $\St_G(n)$ are $p^{\beta(n-1)}$ and $p^{\beta(n)}$, respectively.
\item
We have $|G_n:G_n'|=p^n$ and $G_n/G_n'\cong C_{p^{\beta(n-1)}} \times C_{p^{\beta(n)}}$, where
the first factor corresponds to $a$ and the second to $b$.
\end{enumerate}
\end{thm}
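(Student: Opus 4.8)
The plan is to prove (i) by a direct induction on~$n$, and then to deduce (ii) from (i) together with a single divisibility statement about exponent sums, which is where the real work lies. Throughout I identify $G/G'$ with $\langle aG'\rangle\times\langle bG'\rangle\cong\mathbb{Z}\times\mathbb{Z}$ via Theorem~\ref{thm:G/G'} and let $\epsilon_a,\epsilon_b\colon G\to\mathbb{Z}$ be the associated coordinate homomorphisms, so that $\epsilon_a(g)$ and $\epsilon_b(g)$ record the exponent sums of $a$ and $b$ in~$g$; note $\epsilon_b(A)=0$ and $\epsilon_a(B)=0$. Since $G\le\Gamma$ is contained in a pro-$p$ group, all orders modulo $\St_G(n)$ are powers of~$p$.

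For (i), write $o_n(g)$ for the order of $g$ modulo $\St_G(n)$. I would establish the two recursions $o_n(a)=o_{n-1}(b)$ and $o_n(b)=p\,o_{n-1}(a)$ for $n\ge1$, with base case $o_0(a)=o_0(b)=1$. The first is immediate from $\psi(a^k)=(1,\dots,1,b^k)$, which shows that $a^k\in\St_G(n)$ if and only if $b^k\in\St_G(n-1)$. For the second, $b$ generates $G_1\cong C_p$ by Lemma~\ref{lem: stabiliser}, so $b^m\in\St_G(n)\subseteq\St_G(1)$ forces $p\mid m$; writing $m=pk$ and using $\psi(b^{pk})=(a^k,\dots,a^k)$ gives that $b^{pk}\in\St_G(n)$ if and only if $a^k\in\St_G(n-1)$, whence $o_n(b)=p\,o_{n-1}(a)$. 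Passing to $\log_p$ and using $\beta(n)=\beta(n-2)+1$, one checks at once that $\log_p o_n(a)=\beta(n-1)$ and $\log_p o_n(b)=\beta(n)$ solve these recursions, which proves (i).

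For (ii), note that $G_n/G_n'\cong G/G'\St_G(n)$ is abelian, generated by the images of $a$ and $b$, whose orders divide $p^{\beta(n-1)}$ and $p^{\beta(n)}$ by~(i); hence $|G_n:G_n'|\le p^n$. To finish it suffices to prove the divisibilities
\[
p^{\beta(n-1)}\mid\epsilon_a(g)\quad\text{and}\quad p^{\beta(n)}\mid\epsilon_b(g)\qquad\text{for all }g\in\St_G(n).
\]
Indeed, these say that the homomorphism $G\to C_{p^{\beta(n-1)}}\times C_{p^{\beta(n)}}$ sending $a$ and $b$ to generators of the respective factors kills $G'\St_G(n)$, so it induces a surjection $G_n/G_n'\twoheadrightarrow C_{p^{\beta(n-1)}}\times C_{p^{\beta(n)}}$; comparing with the bound $|G_n:G_n'|\le p^n$ forces this to be an isomorphism, under which $a$ and $b$ correspond to the two factors. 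This yields $|G_n:G_n'|=p^n$ and the claimed structure.

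The divisibilities are proved by induction on~$n$, the base case $n=0$ being trivial. For $g\in\St_G(n)\subseteq\St_G(1)$ with $\psi(g)=(g_1,\dots,g_p)$ and each $g_i\in\St_G(n-1)$, two facts are used. First, applying $\pi\circ\psi$ (the homomorphism $G\to G/G'$ sending $a\mapsto bG'$ and $b\mapsto aG'$) yields $\epsilon_a(g)=\sum_i\epsilon_b(g_i)$ and $\epsilon_b(g)=\sum_i\epsilon_a(g_i)$. Second, writing $g=a'b^{pk}$ with $a'\in A$ (possible since $\St_G(1)=A\langle b^p\rangle$ by Lemma~\ref{lem: stabiliser}) and using $\psi(A)=B\times\dots\times B$ from Theorem~\ref{thm:weakly-branch} together with $\psi(b^p)=(a,\dots,a)$, each section satisfies $g_i\in Ba^k$, so all the $\epsilon_a(g_i)$ equal the same integer~$k$. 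The $\epsilon_a$-divisibility is now routine: $\epsilon_a(g)=\sum_i\epsilon_b(g_i)$ with each $\epsilon_b(g_i)$ divisible by $p^{\beta(n-1)}$ by induction. The $\epsilon_b$-divisibility is the main obstacle: the naive estimate $\epsilon_b(g)=\sum_i\epsilon_a(g_i)$, with each term divisible only by $p^{\beta(n-2)}$, falls one factor of~$p$ short of the required $p^{\beta(n)}=p^{\beta(n-2)+1}$. This missing factor is exactly what the second fact supplies: since all $\epsilon_a(g_i)$ equal~$k$ we get $\epsilon_b(g)=pk$, and as $p^{\beta(n-2)}\mid k$ by induction applied to a single section, we conclude $p^{\beta(n-2)+1}=p^{\beta(n)}\mid\epsilon_b(g)$, completing the induction and hence the proof.
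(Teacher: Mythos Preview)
Your proof is correct and takes a genuinely different route from the paper's.

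For part~(i), the paper argues by a parity-split induction, showing that $b^{p^{n/2}}\in\St_G(n)\setminus G'\St_G(n+1)$ for $n$ even and $a^{p^{(n-1)/2}}\in\St_G(n)\setminus G'\St_G(n+1)$ for $n$ odd, invoking the decomposition $\psi(G')=(G'\times\cdots\times G')\rtimes C$ from Theorem~\ref{thm:semidirect}(i) to get the non-membership. Your recursions $o_n(a)=o_{n-1}(b)$ and $o_n(b)=p\,o_{n-1}(a)$ are cleaner and avoid Theorem~\ref{thm:semidirect} entirely.

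For part~(ii), the paper deduces the lower bound $|G_n:G_n'|\ge p^n$ by observing that each successive quotient $G'\St_G(i)/G'\St_G(i+1)$ is non-trivial, which is exactly the non-membership statement proved in their induction. You instead prove directly that $\epsilon_a(\St_G(n))\subseteq p^{\beta(n-1)}\mathbb{Z}$ and $\epsilon_b(\St_G(n))\subseteq p^{\beta(n)}\mathbb{Z}$, exploiting only the more elementary facts $\St_G(1)=A\langle b^p\rangle$ and $\psi(A)=B\times\cdots\times B$ from Section~\ref{sec:first-properties}; the key observation that all sections of $g\in\St_G(1)$ share the same $\epsilon_a$-value is what recovers the extra factor of~$p$ that the naive estimate loses. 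This is a nice alternative: it is logically earlier (independent of Theorem~\ref{thm:semidirect}) and gives an explicit description of the image of $\St_G(n)$ in $G/G'$, whereas the paper's argument yields the stronger pointwise statement that certain explicit powers witness the jumps in $|G:G'\St_G(n)|$.
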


\begin{proof}
First of all, we prove by induction on $n\ge 0$ the following result: that if $n$ is even then
\begin{equation}
\label{eq:power of b}
b^{p^{\nicefrac{n}{2}}} \in \St_G(n)\backslash G'\St_G(n+1),
\end{equation}
and that if $n$ is odd then
\begin{equation}
\label{eq:power of a}
a^{p^{\nicefrac{(n-1)}{2}}} \in \St_G(n)\backslash G'\St_G(n+1).
\end{equation}
Note that this already implies (i), and furthermore that the orders of the images of $a$ and $b$ in
$G_n/G_n'$ are $p^{\beta(n-1)}$ and $p^{\beta(n)}$, respectively.

The result is obvious for $n=0$.
Now we suppose it holds for $n-1$.
If $n=2m+1$ is odd, we have $\psi(a^{p^m}) = (1,\ldots,1,b^{p^m})$, and since $b^{p^m}\in \St_G(n-1)$ by the induction hypothesis, we get $a^{p^m}\in \St_G(n)$.
Assume, by way of contradiction, that $a^{p^m}\in G'\St_G(n+1)$.
By applying $\psi$ and taking Theorem~\ref{thm:semidirect}(i) into account, we get
\[
(1,\ldots,1,b^{p^m}) \in (G'\St_G(n)\times \cdots \times G'\St_G(n)) C.
\]
Thus for some $i_1,\ldots,i_p$ summing up to $0$, we have
\[
(b^{i_1},\ldots,b^{i_{p-1}},b^{i_p+p^m}) \in G'\St_G(n)\times \cdots \times G'\St_G(n).
\]
By multiplying together all components, we get $b^{p^m}\in G'\St_G(n)$, contrary to the induction hypothesis.

Let us now consider the case when $n=2m$ is even.
Note that $\psi(b^{p^m}) = (a^{p^{m-1}},\ldots,a^{p^{m-1}})$.
Since $a^{p^{m-1}}\in\St_G(n-1)$, we get $b^{p^m}\in\St_G(n)$.
As before, suppose that $b^{p^m}\in G'\St_G(n+1)$.
Applying $\psi$ as above and looking at the first component, we get
\begin{equation}
\label{ed:bia^p^m in G'st(n)}
b^ia^{p^{m-1}}\in G'\St_G(n)
\end{equation}
for some integer $i$. 
Hence $b^{i}\in G'\St_G(n-1)$.
Since by induction $b^{p^{m-1}}\not\in G'\St_G(n-1)$, necessarily $p^m$ divides $i$.
Then by \eqref{ed:bia^p^m in G'st(n)} we obtain that $a^{p^{m-1}}\in G'\St_G(n)$, contrary to the induction hypothesis. 

We now prove (ii).
The abelian group $G_n/G_n'$ is generated by the images of $a$ and $b$, of orders
$p^{\beta(n-1)}$ and $p^{\beta(n)}$, respectively.
Hence $|G_n:G_n'|\le p^{\beta(n-1)+\beta(n)}=p^n$, and all assertions in (ii) immediately follow if we
show that $|G_n:G_n'|\ge p^n$.
To this purpose, observe that \eqref{eq:power of b} and \eqref{eq:power of a} imply that the difference $|G'\St_G(n): G'\St_G(n+1)|$ is non-trivial for every~$n$.
Thus
\[
|G_n:G_n'|=|G:G'\St_G(n)|=\prod_{i=0}^{n-1}|G'\St_G(i):G'\St_G(i+1)|\geq p^n,
\]
as desired.
\end{proof}

Now we can give a stronger form of Lemma \ref{lem:basilicaleveltransitive}.

\begin{thm}
\label{thm:super-strongly-fractal}
A $p$-Basilica group $G$,  for $p$ a prime, is super strongly fractal.
\end{thm}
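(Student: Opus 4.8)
The plan is to establish the single key identity
\[
\psi_{x_i}(\St_G(n)) = \St_G(n-1)
\]
for every letter $x_i\in X$ and every $n\ge 1$, with the convention $\St_G(0)=G$; super strong fractality then follows by peeling off one coordinate per level. Indeed, $G$ is already level-transitive by Lemma~\ref{lem:basilicaleveltransitive}, so by definition it only remains to check that $\psi_u(\St_G(n))=G$ for each $u=x_{i_1}\cdots x_{i_n}\in L_n$. Writing $\psi_u(f)=(f_{x_{i_1}})_{x_{i_2}\cdots x_{i_n}}$ and applying the key identity repeatedly, I would obtain $\psi_u(\St_G(n))=\psi_{x_{i_2}\cdots x_{i_n}}(\St_G(n-1))=\cdots=\St_G(0)=G$, where at each stage it is the \emph{surjectivity} of the section map onto the lower level stabiliser, not merely an inclusion, that makes the recursion go through.

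The inclusion $\psi_{x_i}(\St_G(n))\subseteq\St_G(n-1)$ is immediate: any $f\in\St_G(n)\subseteq\St_G(1)$ is self-similar with $f_{x_i}\in G$, and $f_{x_i}$ fixes every vertex of $T_{x_i}$ up to depth $n-1$, so $f_{x_i}\in\St_G(n-1)$. The reverse inclusion is the heart of the matter, and I would prove it by a diagonal lifting argument. The first step is to observe that the diagonal $\{(g,\dots,g)\mid g\in G\}$ lies in $\psi(\St_G(1))$: since $\psi(A)=B\times\cdots\times B$ by Theorem~\ref{thm:weakly-branch}(i), every tuple $(\beta,\dots,\beta)$ with $\beta\in B$ is realised; since $\psi(b^p)=(a,\dots,a)$, every $(a^m,\dots,a^m)$ is realised; and as $G=\langle a\rangle B$ (because $G/B=\langle aB\rangle$ by Theorem~\ref{thm:G/G'}(i)) and the diagonal map is a homomorphism, these generate the whole diagonal of $G$.

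Granting this, given $g\in\St_G(n-1)$ I would take the element $f\in\St_G(1)$ with $\psi(f)=(g,\dots,g)$. All of its first-level sections equal $g\in\St_G(n-1)$, whence $f\in\St_G(n)$, while $f_{x_i}=g$; this gives $g\in\psi_{x_i}(\St_G(n))$ and proves $\supseteq$. I expect the main obstacle to be exactly the point of ensuring that the lift lands in $\St_G(n)$ rather than merely in $\St_G(1)$: this is what forces me to place $g$ simultaneously in \emph{every} coordinate, which is possible only because $\psi(A)$ is the \emph{full} direct product $B\times\cdots\times B$, allowing all $p$ base components to be prescribed independently. With that input from Theorem~\ref{thm:weakly-branch}(i) in hand, everything else is routine bookkeeping with the section formulas, and the iteration of the previous paragraph completes the proof.
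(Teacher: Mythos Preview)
Your argument is correct, and it takes a genuinely different route from the paper's.  The paper works only at a single vertex $u_n=x_p\cdots x_p$ and, using the order computations of Theorem~\ref{thm:n-order}, observes that the specific elements $a^{p^{\beta(n-1)}}$ and $b^{p^{\beta(n)}}$ lie in $\St_G(n)$ and project under $\psi_{u_n}$ onto $\{a,b\}$; level-transitivity then transports this to every vertex of $L_n$.  Your approach instead proves the uniform structural identity $\psi_{x_i}(\St_G(n))=\St_G(n-1)$ for \emph{every} letter, via the diagonal lift $(g,\dots,g)\in\psi(\St_G(1))$.  This is slightly stronger than what super strong fractality asks for, and it relies only on results from Section~\ref{sec:first-properties} (namely $\psi(A)=B\times\cdots\times B$ and $G=\langle a\rangle B$), bypassing Theorem~\ref{thm:n-order} entirely.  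The paper's proof is more concrete and exhibits explicit preimages of the generators, while yours isolates a cleaner mechanism---it would apply verbatim to any self-similar level-transitive group whose first-level stabiliser contains the full diagonal via $\psi$.
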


\begin{proof}
Let $u_n=x_p\overset{n}{\dots}x_p$ for every $n\in\N$.
Since $G$ is level-transitive, it suffices to show that $\psi_{u_n}(\St_G(n))=G$ for all $n$.
Since
\[
\psi_{u_{2n-1}}(b^{p^n})=a, \quad \psi_{u_{2n-1}}(a^{p^{n-1}})=b, \quad
\psi_{u_{2n}}(b^{p^n})=b, \quad \text{and} \quad \psi_{u_{2n}}(a^{p^n})=a, 
\]
the result follows from Theorem~\ref{thm:n-order}.
\end{proof}

 The above result gives the first examples of weakly branch, but not branch, groups that are super strongly fractal; cf.~\cite[Prop.~3.11]{TUA} and \cite[Prop.~4.3]{Jone}.

For the following result, recall that a group $\mathcal{G}\le \text{Aut}(T)$ is \emph{saturated} if for any $n\in \mathbb{N}$ there exists a subgroup $H_n\le \text{St}_{\mathcal{G}}(n)$ that is characteristic in~$\mathcal{G}$ and $\psi_v(H_n)$ acts level-transitively on $T$ for all vertices~$v\in L_n$. If $\mathcal{G}$ is level-transitive and super strongly fractal, then it suffices to show this last property for a single vertex $u\in L_n$, since if we write $v=u^g$ with $g\in\mathcal{G}$ then by
(\ref{eqn:section conjugate}) we have
\[
\psi_v(H_n) = \psi_{u^g}(H_n^g) = \psi_u(H_n)^{g_u} = \psi_u(H_n),
\]
where the last equality follows from
$\psi_u(H_n)\trianglelefteq \psi_u(\St_{\mathcal{G}}(n))=\mathcal{G}$, since $\mathcal{G}$ is super strongly fractal.

\begin{cor}
\label{cor:Aut(G)}
Let $G$ be a $p$-Basilica group for a prime $p$. Then $\Aut(G)=N_{\Aut(T)}(G)$.
\end{cor}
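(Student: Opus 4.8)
The statement $\Aut(G)=N_{\Aut(T)}(G)$ asserts that every abstract automorphism of $G$ is realised by conjugation inside $\Aut(T)$. The containment $N_{\Aut(T)}(G)\le \Aut(G)$ is immediate: conjugation by any element normalising $G$ restricts to an automorphism of $G$. The substance is the reverse inclusion, and my plan is to invoke a general rigidity result for \emph{saturated} weakly branch groups. The paragraph immediately preceding the corollary has been set up precisely for this: it recalls the definition of a saturated group and, crucially, reduces the level-transitivity-at-every-vertex condition to a single vertex once $G$ is known to be super strongly fractal (via \eqref{eqn:section conjugate}). So the architecture of the proof is: first verify that $G$ is saturated, then quote the known theorem that for a saturated (weakly) branch group $\mathcal G\le\Aut(T)$ one has $\Aut(\mathcal G)=N_{\Aut(T)}(\mathcal G)$ — this is the rigidity-of-the-tree-action principle in the style of Lavreniuk--Nekrashevych and Grigorchuk--Wilson, and it is exactly the kind of statement the authors would cite.

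\textbf{Key steps in order.} First I would recall that $G$ is weakly branch (Theorem~\ref{thm:weakly-branch}), level-transitive (Lemma~\ref{lem:basilicaleveltransitive}), and super strongly fractal (Theorem~\ref{thm:super-strongly-fractal}); these are the hypotheses feeding into saturation. Second, to establish saturation I must exhibit, for each $n\in\N$, a characteristic subgroup $H_n\le\St_G(n)$ whose section $\psi_v(H_n)$ acts level-transitively on $T$ for all $v\in L_n$. The natural candidate is a verbal or commutator-type subgroup that is manifestly characteristic — for instance $H_n=\St_G(n)$ itself, or a characteristic subgroup built from the lower central series or from $\gamma_i(G)$, intersected appropriately with $\St_G(n)$. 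Since $G$ is super strongly fractal, $\psi_u(\St_G(n))=G$ for every $u\in L_n$, and $G$ is level-transitive, so $\St_G(n)$ already has level-transitive sections; the only point to check is characteristicity of whatever subgroup I choose. Because $\St_G(n)$ is characteristic in $G$ (it is the stabiliser of the $n$th layer, preserved by every tree automorphism, and in particular by every element of $\Aut(G)$ once one knows $\Aut(G)$ respects the filtration — but this is circular), I should instead take $H_n$ to be an intrinsically defined characteristic subgroup, such as a term of the lower central series or the verbal subgroup $G^{p^k}\gamma_j(G)$ landing inside $\St_G(n)$, and then use super strong fractality to get the level-transitive sections. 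Third, having verified saturation, I apply the general theorem to conclude $\Aut(G)\le N_{\Aut(T)}(G)$, which combined with the trivial inclusion gives equality.

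\textbf{Main obstacle.} The delicate point is \emph{characteristicity}: to prove $G$ saturated I need a subgroup $H_n$ that is characteristic in the abstract-group sense (invariant under all of $\Aut(G)$), not merely normal, and this must be established without already knowing that abstract automorphisms come from tree automorphisms — otherwise the argument is circular. The way out is to use subgroups defined by \emph{intrinsic} group-theoretic data (terms of the lower central series, isolators, or verbal subgroups), which are automatically characteristic, and then to show that such a subgroup lies in $\St_G(n)$ for $n$ large and has level-transitive sections. For the $p$-Basilica group this is feasible because Theorem~\ref{thm:n-order} controls precisely how powers of $a$ and $b$ sit in the layer filtration, so one can locate a characteristic power-commutator subgroup inside each $\St_G(n)$; combining this with super strong fractality (Theorem~\ref{thm:super-strongly-fractal}) yields the level-transitive sections and hence saturation. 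Once saturation is in hand the corollary follows formally, so I expect essentially all the real work to be concentrated in pinning down a suitable characteristic $H_n$ and verifying the single-vertex level-transitivity condition that the preceding paragraph has already reduced the problem to.
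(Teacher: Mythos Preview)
Your plan is correct and matches the paper's approach exactly: reduce to showing $G$ is saturated and then invoke \cite[Prop.~7.5]{LN}, with the single-vertex reduction already supplied by super strong fractality. The paper's explicit characteristic subgroups are $H_0=G$, $H_1=G'$, and $H_n=[H_{n-1},H_{n-2}^{\,p}]$ for $n\ge 2$; the containment $H_n\le\St_G(n)$ follows since successive congruence quotients are elementary abelian, and the level-transitivity of the sections is obtained not from super strong fractality applied to $H_n$ (which would not be immediate, as $H_n$ is strictly smaller than $\St_G(n)$) but by a short induction establishing $\psi_{u_{n-1}}(H_n)=G'$, using that $b\in\psi_{u_{n-1}}(H_{n-1})$ and $a\in\psi_{u_{n-1}}(H_{n-2}^{\,p})$ combine to give $[b,a]$ and hence all of $G'$.
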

 
\begin{proof} From~\cite[Prop.~7.5]{LN} it suffices to show that $G$ is saturated. We set $H_0=G$, $H_1=G'$, and $H_n=[H_{n-1},H_{n-2}^p]$ for all $n\ge 2$, which are characteristic subgroups of~$G$.
Note that $H_n\le \St_G(n)$ for every $n\in\mathbb{N}$, since the quotient of two consecutive level stabilisers in $G$ is an elementary abelian $p$-group.

Set $u_n=x_p\overset{n}{\dots}x_p$ for every $n\in\mathbb{N}$.
As explained above, we only need to show that $\psi_{u_n}(H_n)$ acts level-transitively on~$T$.
This will follow if we prove that $\psi_{u_{n-1}}(H_n)=G'$, since $\psi([b^{-1},a])=(1,\ldots,1,b^{-1},b)$, with $b$ acting transitively on the first level vertices, and since $G'\times\cdots\times G'\le \psi(G')$.

Let us then prove that $\psi_{u_{n-1}}(H_n)=G'$ for every $n\in\mathbb{N}$.
We use induction on $n$, the case $n=1$ being obvious.
Assume now that $n\ge 2$.
Clearly we only have to show that $G'\le \psi_{u_{n-1}}(H_n)$.
Observe first that $b\in\psi_{u_{n-1}}(H_{n-1})$ by the induction hypothesis.
Since also $b\in\psi_{u_{n-2}}(H_{n-2})$ (here we need to use that $H_0=G$ if $n=2$) and
$\psi(b^p)=(a,\ldots,a)$, we have $a\in\psi_{u_{n-1}}(H_{n-2}^p)$.
Consequently $[b,a]\in \psi_{u_{n-1}}([H_{n-1},H_{n-2}^p])=\psi_{u_{n-1}}(H_n)$.
Since $G'=\langle [b,a] \rangle^G$ and $\psi_{u_{n-1}}(H_n)\trianglelefteq G$, the proof is complete.
 \end{proof}

Examples of other groups acting on rooted trees with automorphism group equal to its normaliser in
$\Aut(T)$ are the Grigorchuk group and the Brunner-Sidki-Vieira group~\cite{LN}, and the branch multi-EGS groups~\cite{TUA}. 

Next we generalise Theorem \ref{thm:weakly-branch}(ii) and give a relation between rigid stabilisers and level stabilisers.

\begin{thm}
\label{thm:derived stabilisers}
Let $G$ be a $p$-Basilica group, for a prime $p$.
Then the following hold:
\begin{enumerate}
\item
$\psi_n(\St_G(n)')=G'\times\overset{p^n}{\cdots}\times G'$ for every $n\in\N$.
\item
$\Rist_G(n)=\St_G(n)'$ for every $n\ge 2$.
\end{enumerate}
\end{thm}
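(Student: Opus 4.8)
The plan is to prove the two parts in order, using the weakly regular branch structure of $G$ over $G'$ together with the section computations already established. For part (i), I would proceed by induction on $n$. The inclusion $\psi_n(\St_G(n)')\subseteq G'\times\cdots\times G'$ is immediate from self-similarity, since the sections of any element of $\St_G(n)$ lie in $G$, and commutators of such sections lie in $G'$. The reverse inclusion is the substantive direction. The base case $n=1$ is exactly Theorem~\ref{thm:weakly-branch}(ii), which gives $\psi(\St_G(1)')=G'\times\cdots\times G'$. For the inductive step, I would exploit the fact that $\St_G(n)=\St_G(n-1)\cap\St(n)$ and relate sections at level $n$ to sections at level $n-1$ via $(f_u)_x=f_{ux}$; concretely, I would show that the first-level sections of elements of $\St_G(n)'$ realise all of $\St_G(n-1)'$ in each coordinate, and then invoke the induction hypothesis coordinatewise to fill in $G'$ at every vertex of $L_n$.

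The cleanest route for the inductive step is to establish that $\St_G(n)'$ contains, for each vertex $v\in L_{n-1}$, elements whose section at $v$ can be taken to be an arbitrary element of $\St_G(1)'$ while being trivial at all other vertices of $L_{n-1}$. Because $G$ is super strongly fractal (Theorem~\ref{thm:super-strongly-fractal}), $\psi_v(\St_G(n-1))=G$ for every $v\in L_{n-1}$, so one can lift generators of $\St_G(1)'$ back into $\St_G(n-1)$ acting only below $v$; taking commutators of such lifts keeps us inside $\St_G(n)'$ and, by the reasoning behind Lemma~\ref{lem:getting-direct-product} applied level by level, produces the full direct product $\St_G(1)'\times\cdots\times\St_G(1)'$ (indexed over $L_{n-1}$) inside $\psi_{n-1}(\St_G(n)')$. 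Applying $\psi$ once more in each coordinate and using the base case $\psi(\St_G(1)')=G'\times\cdots\times G'$ then yields $G'\times\overset{p^n}{\cdots}\times G'\subseteq\psi_n(\St_G(n)')$, completing the induction. The main obstacle I anticipate is bookkeeping the two-step descent correctly: one must distinguish the section map $\psi_{n-1}$ (landing at level $n-1$) from the final level-$n$ decomposition, and verify that commutators taken at the top level genuinely descend to commutators of sections rather than merely to elements of $G'$ — this is where the identity $(fg)_u=f_ug_{u^f}$ and the normality of the relevant subgroups must be used carefully.

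For part (ii), I would first observe the easy inclusion $\St_G(n)'\subseteq\Rist_G(n)$: by part (i), every element of $\St_G(n)'$ has trivial label off the subtrees $T_u$ for $u\in L_n$ (its sections at level $n$ lie in $G'\le\St_G(1)$, but more to the point, $\St_G(n)'$ is generated by commutators supported within the level-$n$ subtrees), so each generator fixes everything outside a single $T_u$ and hence lies in $\Rist_G(n)$. For the reverse inclusion $\Rist_G(n)\subseteq\St_G(n)'$, I would take an element $r\in\Rist_G(u)$ for a fixed $u\in L_n$; then $\psi_n(r)$ is trivial in all coordinates except the one indexed by $u$, where it equals some $g\in G$ with the property that $(1,\ldots,1,g,1,\ldots,1)\in\psi_n(\Rist_G(n))\subseteq\psi_n(\St_G(n))$. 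The constraint forcing $g\in G'$ comes from part~(i) together with the abelianisation data: since $\psi_n(\St_G(n)')=G'\times\cdots\times G'$ and the quotient $\psi_n(\St_G(n))/\psi_n(\St_G(n)')$ records the abelianised sections, one shows that an element supported at a single coordinate can lie in $\Rist_G(n)$ only if that coordinate already lies in $G'$, because otherwise its image in $\St_G(n)/\St_G(n)'$ would be detected by a nontrivial abelian invariant inconsistent with rigidity.

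The reason the hypothesis $n\ge 2$ is needed — and the subtlest point of part (ii) — is that at level $1$ the rigid stabiliser $\Rist_G(1)=A$ is strictly larger than $\St_G(1)'$, as computed in Theorem~\ref{thm:weakly-branch}(i). I expect the genuine difficulty to lie in proving $\Rist_G(u)\subseteq\St_G(n)'$ for $n\ge 2$: one must rule out the possibility that a rigid element supported below $u$ has a section equal to a nontrivial power of $a$ or $b$ modulo $G'$. Here I would argue that such a section, say living at the bottom of the path $u$, would have to propagate upward consistently with the recursion $\psi(a)=(1,\ldots,1,b)$, $\psi(b)=(1,\ldots,1,a)\sigma$, and for $n\ge 2$ the two-level structure forces any purely rigid contribution into $G'$; the key leverage is that $\Rist_G(1)=A$ has $\psi(A)=B\times\cdots\times B$ with $B=\langle b\rangle G'$, so at the second level the non-derived part of $A$ is confined to powers of $b$ that cannot be supported at a single vertex without violating the rigidity condition established via part~(i).
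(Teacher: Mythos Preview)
Your plan for part (i) has a real gap at the lifting step. Super strong fractality tells you that $\psi_v(\St_G(n-1))=G$ for $v\in L_{n-1}$, but this only guarantees surjectivity of the section map; it does \emph{not} produce lifts that ``act only below~$v$''. An element of $\St_G(n-1)$ supported only below~$v$ is by definition in $\Rist_G(v)$, and knowing the rigid stabilisers is precisely what part~(ii) is about --- so invoking such lifts here is circular. Moreover, even if you had lifts $\tilde g,\tilde h\in\St_G(n-1)$ with prescribed sections at~$v$, you would still need $\tilde g,\tilde h\in\St_G(n)$ (i.e.\ all their other level-$(n-1)$ sections lying in $\St_G(1)$) to conclude $[\tilde g,\tilde h]\in\St_G(n)'$; super strong fractality gives no control over those other sections.

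The paper avoids induction entirely. By Theorem~\ref{thm:n-order}(i) the elements $a^{p^{\beta(n-1)}}$ and $b^{p^{\beta(n)}}$ lie in $\St_G(n)$, and a direct check from the recursion $\psi(a)=(1,\ldots,1,b)$, $\psi(b^p)=(a,\ldots,a)$ shows that the level-$n$ supports of the relevant powers of $a$ and of $b$ meet only at the rightmost vertex $u_n=x_p\cdots x_p$. Hence
\[
\psi_n\bigl([b^{p^{n/2}},a^{p^{n/2}}]\bigr)=(1,\ldots,1,[b,a])
\]
for $n$ even (and similarly with exponents $p^{(n\mp1)/2}$ for $n$ odd), after which Lemma~\ref{lem:getting-direct-product} applies at once.

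For part~(ii), your argument for the hard inclusion $\Rist_G(n)\subseteq\St_G(n)'$ never actually pins down why the single nontrivial section of a rigid element must lie in~$G'$; saying it ``would be detected by a nontrivial abelian invariant inconsistent with rigidity'' is not an argument, and part~(i) by itself does not supply the missing constraint. The paper's mechanism is concrete and rests on Theorem~\ref{thm:semidirect}(i) together with $A\cap B=G'$: since $\Rist_G(2)\subseteq\Rist_G(1)=A$ with $\psi(A)=B\times\cdots\times B$, while also $\psi(\Rist_G(2))\subseteq\Rist_G(1)^p=A^p$, one obtains $\psi(\Rist_G(2))\subseteq (A\cap B)^p=(G')^p$ and hence $\psi_2(\Rist_G(2))\subseteq\psi(G')^p=\bigl((G'\times\cdots\times G')C\bigr)^p$. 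A rigid element supported at a single level-$2$ vertex with section $b^m$, $m\ne 0$, would then force $(b^m,1,\ldots,1)$ into one block, contradicting the zero-sum condition that defines~$C$. The general case $n\ge 3$ follows by the same reasoning inductively.
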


\begin{proof}
(i)
Since $G$ is self-similar, the inclusion $\subseteq$ is obvious.
For the reverse inclusion, a direct calculation shows that 
\begin{align}
\label{eq:psi[a,b]modd}
(1,\overset{p^n-1}\ldots,1,[b,a])=
\begin{cases}
\psi_n([b^{p^{\nicefrac{n}{2}}},a^{p^{\nicefrac{n}{2}}}]) & \text{ if $n$ is even},\\
\psi_n([a^{p^{\nicefrac{(n-1)}{2}}},b^{p^{\nicefrac{(n+1)}{2}}}]) & \text{ if $n$ is odd}.
\end{cases}
\end{align}
By Theorem~\ref{thm:n-order}, it follows that $(1,\overset{p^n-1}\ldots,1,[b,a])\in\psi_n(\St_G(n)')$.
Now the result follows from Lemma \ref{lem:getting-direct-product}.

(ii)
It suffices to show that $\psi_n(\Rist_G(n))=G'\times\overset{p^n}{\cdots}\times G'$ for every $n\ge 2$.
For $n=2$ we have
\begin{align*}
\psi(\Rist_G(2))
&\subseteq
(\Rist_G(1) \times \overset{p}{\cdots} \times \Rist_G(1)) \cap \psi(\Rist_G(1))
\\
&=
(A\times \overset{p}{\cdots} \times A)\cap (B\times \overset{p}{\cdots} \times B)
\\
&=
G'\times \overset{p}{\cdots} \times G',
\end{align*}
by using first Theorem \ref{thm:weakly-branch} and then Theorem \ref{thm:G/G'}(iii).
Hence
\begin{equation}
\label{eq:psi2(rist2)}
\psi_2(\Rist_G(2)) \subseteq \psi(G') \times \overset{p}{\cdots} \times \psi(G')
= (G'\times \overset{p}{\cdots} \times G')C \times \overset{p}{\cdots} \times
(G'\times \overset{p}{\cdots} \times G')C,
\end{equation}
by Theorem \ref{thm:semidirect}.
Now since $G$ is weakly regular branch over $G'$, we have
\[
G'\times \overset{p^2}{\cdots} \times G' \subseteq \psi_2(\Rist_G(2)).
\]
If this inclusion is strict then we may assume without loss of generality that some element
$(b^m,\ldots)$ with $m\ne 0$ belongs to $\psi_2(\Rist_G(2))$.
By the definition of rigid stabiliser, it follows that $(b^m,1,\ldots,1) \in\psi_2(\Rist_G(2))$,
which is contrary to (\ref{eq:psi2(rist2)}).
This proves the case $n=2$.
The general case follows in a similar fashion by induction on $n$.
\end{proof}

We close this section by determining the structure of the quotients $G'/\gamma_3(G)$, $G'/G''$, and
$\gamma_3(G)/G''$, which is key for Section~\ref{sec:CSP}.
We need a couple of lemmas.

\begin{lem}
\label{lem:psi stG'}
Let $G$ be a $p$-Basilica group, for a prime $p$. For every $n\in\mathbb{N}$, we have
\[
\psi(\St_{G'}(n))
=
\Big( \St_{G'}(n-1) \times \cdots \times \St_{G'}(n-1) \Big) \, C^{p^{\beta(n-1)}}.
\]
\end{lem}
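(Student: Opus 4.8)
The plan is to use the semidirect decomposition $\psi(G')=(G'\times\cdots\times G')\rtimes C$ from Theorem~\ref{thm:semidirect}(i) to read off the sections of an arbitrary element of $G'$, and then to control which of these lie in $\St_{G'}(n)$ by means of the order of $b$ modulo level stabilisers computed in Theorem~\ref{thm:n-order}. The first observation I would record is that, since $b$ has infinite order modulo $G'$ by Theorem~\ref{thm:G/G'}, the intersection of $G'\times\cdots\times G'$ with $C$ is trivial, so every $g\in G'$ has a \emph{unique} expression $\psi(g)=(h_1,\ldots,h_p)\,c$ with $h_k\in G'$ and $c=(b^{j_1},\ldots,b^{j_p})\in C$; as $c$ lies in the base group of $W_p(G)$, the sections of $g$ are simply $g_k=h_k b^{j_k}$.

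The key arithmetic input I would isolate is that $b^{j}\in G'\St_G(n-1)$ if and only if $p^{\beta(n-1)}\mid j$. This follows from Theorem~\ref{thm:n-order}(ii): in $G_{n-1}/G_{n-1}'\cong C_{p^{\beta(n-2)}}\times C_{p^{\beta(n-1)}}$, which is just $G/G'\St_G(n-1)$, the image of $b$ has order exactly $p^{\beta(n-1)}$. The same theorem, part~(i), gives that $b^{j}\in\St_G(n-1)$ precisely when $p^{\beta(n-1)}\mid j$ as well. For the inclusion $\subseteq$ I would then take $g\in\St_{G'}(n)$; since $G'\le\St_G(1)$, the condition $g\in\St_G(n)$ is equivalent to each section $g_k=h_kb^{j_k}$ lying in $\St_G(n-1)$. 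Because $h_k\in G'$, this forces $b^{j_k}\in G'\St_G(n-1)$, whence $p^{\beta(n-1)}\mid j_k$ by the arithmetic input. Consequently $c\in C^{p^{\beta(n-1)}}$ (all exponents being divisible by $p^{\beta(n-1)}$, with their sum still zero), and since then $b^{j_k}\in\St_G(n-1)$, also $h_k=g_kb^{-j_k}\in G'\cap\St_G(n-1)=\St_{G'}(n-1)$. Thus $\psi(g)\in(\St_{G'}(n-1)\times\cdots\times\St_{G'}(n-1))\,C^{p^{\beta(n-1)}}$.

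For the reverse inclusion it suffices to show that each of the two factors lies in $\psi(\St_{G'}(n))$. An element $(h_1,\ldots,h_p)$ of $\St_{G'}(n-1)\times\cdots\times\St_{G'}(n-1)$ lies in $G'\times\cdots\times G'=\psi(\St_G(1)')$ by Theorem~\ref{thm:weakly-branch}(ii), so it equals $\psi(g)$ for some $g\in\St_G(1)'\le G'$, and since every section $h_k\in\St_G(n-1)$ we get $g\in\St_{G'}(n)$. For the second factor, each generator $c_i^{\,p^{\beta(n-1)}}$ of $C^{p^{\beta(n-1)}}$ has all sections equal to $b^{\pm p^{\beta(n-1)}}\in\St_G(n-1)$ and is the image under $\psi$ of an element of $G'$ by~(\ref{ci commutator}); hence its unique $\psi$-preimage lies in $\St_{G'}(n)$.

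The main point requiring care throughout is the clean separation of the $G'$-part from the $C$-part of $\psi(g)$: it is precisely the infinite order of $b$ modulo $G'$ that makes this decomposition unique and lets the divisibility conclusion for the $C$-part not interfere with membership of the $h_k$ in $G'$. No induction on $n$ is needed; the statement follows directly once Theorem~\ref{thm:n-order} is in hand. The degenerate case $n=1$, where $\beta(0)=0$ and $C^{1}=C$, simply recovers $\psi(\St_{G'}(1))=\psi(G')$, consistent with $\St_{G'}(1)=G'$.
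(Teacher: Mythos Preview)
Your proof is correct and follows essentially the same approach as the paper: both use the unique decomposition $\psi(g)=(w_1,\ldots,w_p)(b^{i_1},\ldots,b^{i_p})$ from Theorem~\ref{thm:semidirect}(i), observe that each section $w_jb^{i_j}$ lies in $\St_G(n-1)$, and then invoke Theorem~\ref{thm:n-order} to force $p^{\beta(n-1)}\mid i_j$, whence both $b^{i_j}$ and $w_j$ lie in $\St_G(n-1)$. The paper dismisses the inclusion $\supseteq$ as obvious from Theorems~\ref{thm:semidirect}(i) and~\ref{thm:n-order}(i), whereas you spell it out; your treatment of the case $n=1$ and the remark on uniqueness of the decomposition are also just making explicit what the paper leaves tacit.
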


\begin{proof}
The inclusion $\supseteq$ is obvious from Theorem \ref{thm:semidirect}(i) and
Theorem \ref{thm:n-order}(i).
For the other direction, let $g\in \St_{G'}(n)$ and write
$\psi(g)=(w_1,\ldots,w_p)(b^{i_1},\ldots,b^{i_p})$, where the first factor is in $G'\times \cdots \times G'$ and the second is in $C$.
Fix an index $j\in\{1,\ldots,p\}$.
Since $w_jb^{i_j}\in\St_G(n-1)$ we have $b^{i_j}\in G'\St_G(n-1)$, and then $p^{\beta(n-1)}$ divides $i_j$ by Theorem~\ref{thm:n-order}.
Thus $b^{i_j}\in \St_G(n-1)$ and it follows that also $w_j\in\St_G(n-1)$. This proves the result.
\end{proof}

\begin{lem}
\label{lem:order comm}
Let $G$ be a $p$-Basilica group, for $p$ a prime.
Then the order of $[a,b]$ modulo $\gamma_3(G)\St_{G'}(n)$ is at least $p^{\beta(n-1)}$ for every $n\in\N$.
\end{lem}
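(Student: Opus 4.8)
The plan is to prove, by induction on $n$, the reformulated claim that $[a,b]^m\in\gamma_3(G)\St_{G'}(n)$ implies $p^{\beta(n-1)}\mid m$. Since $G'/\gamma_3(G)$ is cyclic (as $G$ is two-generated), with generator the image of $[a,b]$, and since $G'/\St_{G'}(n)$ is a finite $p$-group, the order of $[a,b]$ modulo $\gamma_3(G)\St_{G'}(n)$ is a $p$-power $p^{d_n}$, and the claim is exactly $d_n\ge\beta(n-1)$. The cases $n=1,2$ are immediate from $\St_{G'}(1)=G'$, which gives $d_1=0=\beta(0)$.

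For the inductive step I would apply $\psi$ — injective on $\St_G(1)\ge G'$ — and work in the abelian quotient $\psi(G')/\psi(G'')$, recalling that $\psi(G'')=\gamma_3(G)\times\cdots\times\gamma_3(G)$ by Theorem~\ref{thm:semidirect}(ii) and that $G''\le\gamma_3(G)$, so the containment $[a,b]^m\in\gamma_3(G)\St_{G'}(n)$ may be tested there. By Theorem~\ref{thm:semidirect}(i) this quotient is $\big((G'/\gamma_3(G))\times\cdots\times(G'/\gamma_3(G))\big)\rtimes C$; since $[G',b]\le\gamma_3(G)$ and $C$ is generated by tuples of powers of $b$, the subgroup $C$ centralises the base, so the quotient is the direct product $(G'/\gamma_3(G))^{p}\times C$. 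In these coordinates the image of $[a,b]^m$ equals that of $c_0^{\,m}$ (using $c_0=\psi([b^{-1},a])$ and $[b^{-1},a]\equiv[a,b]$ modulo $\gamma_3(G)$), namely the pure $C$-vector $(m,0,\dots,0)$.

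I would then read off the image of $\gamma_3(G)\St_{G'}(n)$. By Lemma~\ref{lem:psi stG'} the image of $\St_{G'}(n)$ is $\overline{\St_{G'}(n-1)}^{\,p}\times C^{p^{\beta(n-1)}}$, where $\overline{\St_{G'}(n-1)}=\langle[a,b]^{p^{d_{n-1}}}\rangle$ inside $G'/\gamma_3(G)$; thus the $C$-part forces divisibility by $p^{\beta(n-1)}$, while each base-coordinate is divisible by $p^{d_{n-1}}$. By Theorem~\ref{thm:semidirect}(iii) the image of $\gamma_3(G)$ is generated by $\bar D$ (base tuples summing to zero in $G'/\gamma_3(G)$, with trivial $C$-part), by the $\bar y_i=c_{i-1}c_i^{-1}$ for $1\le i\le p-2$ (pure $C$-differences), and by $\bar y_0=c_0^{-p}([a,b^{-1}],1,\dots,1)$, which couples a single base-entry $\equiv -[a,b]$ to the $C$-vector $c_0^{-p}$.

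The crux is the resulting divisibility bookkeeping. Expressing $(m,0,\dots,0)$ as an integer combination of these generators, the vanishing of all base-coordinates forces the multiplicity $\alpha$ of $\bar y_0$ to lie in $p^{d_{n-1}}\mathbb{Z}$ — this is precisely where $\overline{\St_{G'}(n-1)}=\langle[a,b]^{p^{d_{n-1}}}\rangle$ together with $\bar D$ is used — while the vanishing of the coordinates $m_1,\dots,m_{p-2}$ forces the multiplicities of the $\bar y_i$ into $p^{\beta(n-1)}\mathbb{Z}$. Substituting into the surviving coordinate gives $m\equiv -p\alpha\pmod{p^{\beta(n-1)}}$ with $p\alpha\in p^{1+d_{n-1}}\mathbb{Z}$; the inductive hypothesis $d_{n-1}\ge\beta(n-2)$ and the elementary inequality $\beta(n-2)+1\ge\beta(n-1)$ then yield $p^{\beta(n-1)}\mid m$, as required. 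I expect the main obstacle to be exactly this coupled computation: keeping track of how $\bar y_0$ acts simultaneously on a base-component and on the $c_0$-coordinate, so that the base-vanishing condition transfers the inductive divisibility from the components onto $\alpha$. The remaining, $C$-only, part of the bookkeeping is routine.
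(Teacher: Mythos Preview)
Your proposal is correct and follows essentially the same route as the paper: decompose via $\psi$ using Theorem~\ref{thm:semidirect} and Lemma~\ref{lem:psi stG'}, read off from the $C$-coordinates that $m\equiv -p\alpha\pmod{p^{\beta(n-1)}}$, and extract the inductive constraint $p^{d_{n-1}}\mid\alpha$ by summing the base components---which is exactly the paper's homomorphism $\tau\colon B\times\cdots\times B\to B/\gamma_3(G)$ in disguise. The only cosmetic differences are that the paper argues by contradiction with a $p$-power exponent and phrases the base-sum step via $\tau$ rather than inside the direct-product decomposition $(G'/\gamma_3(G))^{p}\times C$.
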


\begin{proof}
Since $[a,b]$ and $[a,b^{-1}]$ are inverse conjugate, we prove the result for the order of $[a,b^{-1}]$.
We use induction on $n$.
The result is obvious if $n=1$, so we suppose that $n\ge 2$.
If $[a,b^{-1}]^{p^m}\in \gamma_3(G)\St_{G'}(n)$, we want to show that $m\ge \beta(n-1)$.
By way of contradiction, we assume that $m<\beta(n-1)$.
By applying $\psi$ to $[a,b^{-1}]^{p^m}$ and using Theorem~\ref{thm:semidirect}(iii) and Lemma~\ref{lem:psi stG'}, we get
\begin{equation}
\label{decomposition}
c_{0}^{-p^m} = y d g c,
\end{equation}
where $y = y_0^{k_0} \ldots y_{p-2}^{k_{p-2}}$ for some $k_0,\ldots,k_{p-2}\in\mathbb{Z}$, $d\in D$,
$g\in \St_{G'}(n-1)\times \cdots \times \St_{G'}(n-1)$, and $c\in C^{p^{\beta(n-1)}}$.
If we reduce \eqref{decomposition} modulo $G'\times \cdots \times G'$ and use that $y_{0}$ reduces to $c_{0}^{-p}$, we get
\begin{equation}
\label{basis decomposition}
c_{0}^{\,p^m-pk_0+k_1}c_1^{-k_1+k_2}\cdots c_{p-3}^{-k_{p-3}+k_{p-2}}c_{p-2}^{-k_{p-2}}
\in
C^{p^{\beta(n-1)}}.
\end{equation}
Since $c_0,\ldots,c_{p-2}$ form a basis for the free abelian group~$C$, it follows that
all exponents in~\eqref{basis decomposition} are divisible by $p^{\beta(n-1)}$ and, as a consequence, so is $p^m-pk_0$.
Since $m<\beta(n-1)$, it follows that the $p$-part of $pk_0$ is~$p^m$.
Now since $B=\langle b \rangle G'$ is abelian modulo $\gamma_3(G)$, the map
\[
\begin{matrix}
\tau & \colon & B \times \cdots \times B & \longrightarrow & B/\gamma_3(G)\hfill
\\[5pt]
& & (g_1,\ldots,g_p) & \longmapsto & g_1\cdots g_p \gamma_3(G)
\end{matrix}
\]
is a group homomorphism.
Observe that both $C$ and $D$ lie in the kernel of~$\tau$, and that
$\tau(y_0)=[a,b^{-1}]\gamma_3(G)$.
Hence by applying $\tau$ to \eqref{decomposition}, we get
\[
[a,b^{-1}]^{k_0}\in \gamma_3(G) \St_{G'}(n-1).
\]
Since the $p$-part of $k_{0}$ is $p^{m-1}$, by the induction hypothesis we have $m-1\ge\beta(n-2)$,
and so $m\ge \beta(n-2)+1\ge \beta(n-1)$, contrary to our assumption.
This completes the proof.
\end{proof}

\begin{thm}
\label{thm:key}
Let $G$ be a $p$-Basilica group, for $p$ a prime. Then:
\begin{enumerate}
\item
$G'/\gamma_3(G)\cong \mathbb{Z}$.
\item
$G'/G''\cong \mathbb{Z}^{2p-1}$.
\item
$\gamma_3(G)/G''\cong \mathbb{Z}^{2p-2}$.
\end{enumerate}
\end{thm}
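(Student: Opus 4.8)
The plan is to obtain all three isomorphisms from the descriptions of $\psi(G')$, $\psi(G'')$ and $\psi(\gamma_3(G))$ in Theorem~\ref{thm:semidirect}, the only quantitative ingredient being Lemma~\ref{lem:order comm}. I would begin with~(i). Since $G=\langle a,b\rangle$ we have $G'=\langle[a,b]\rangle^G$, and modulo $\gamma_3(G)=[G',G]$ every conjugate of $[a,b]$ equals $[a,b]$; hence $G'/\gamma_3(G)=\langle[a,b]\gamma_3(G)\rangle$ is cyclic. To see it is infinite, suppose $[a,b]$ had finite order $d$ modulo $\gamma_3(G)$. Then its order modulo the larger subgroup $\gamma_3(G)\St_{G'}(n)$ would divide $d$ for every $n$, contradicting Lemma~\ref{lem:order comm}, which bounds this order below by $p^{\beta(n-1)}$, a quantity tending to infinity. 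Hence $G'/\gamma_3(G)\cong\mathbb{Z}$.

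For~(ii), note that $G'\le\St_G(1)$, so $\psi$ is injective on $G'$ and $G'/G''\cong\psi(G')/\psi(G'')$. By Theorem~\ref{thm:semidirect}(i) and~(ii), this quotient is the abelian group
\[
M=\big((G'\times\cdots\times G')\rtimes C\big)\big/(\gamma_3(G)\times\cdots\times\gamma_3(G)).
\]
The image of the base group $G'\times\cdots\times G'$ in $M$ is $(G'/\gamma_3(G))^p\cong\mathbb{Z}^p$ by~(i), and the corresponding quotient is $\psi(G')/(G'\times\cdots\times G')\cong C\cong\mathbb{Z}^{p-1}$. Hence $M$ sits in a short exact sequence
\[
1\longrightarrow\mathbb{Z}^p\longrightarrow M\longrightarrow\mathbb{Z}^{p-1}\longrightarrow 1,
\]
which splits because $\mathbb{Z}^{p-1}$ is free abelian; therefore $M\cong\mathbb{Z}^{2p-1}$, as required.

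Finally, for~(iii) the inclusion $G''\le\gamma_3(G)$ gives a natural surjection $G'/G''\twoheadrightarrow G'/\gamma_3(G)$ whose kernel is $\gamma_3(G)/G''$, hence a short exact sequence $1\to\gamma_3(G)/G''\to\mathbb{Z}^{2p-1}\to\mathbb{Z}\to 1$ by~(i) and~(ii). As a subgroup of a free abelian group, $\gamma_3(G)/G''$ is free abelian, and since the quotient is torsion-free of rank one its rank is $(2p-1)-1=2p-2$; thus $\gamma_3(G)/G''\cong\mathbb{Z}^{2p-2}$. I expect the difficulty to be entirely front-loaded into the infinite-order estimate supplied by Lemma~\ref{lem:order comm}: once $G'/\gamma_3(G)\cong\mathbb{Z}$ is secured, parts~(ii) and~(iii) are routine extension-splitting arguments. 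The only point needing care is to confirm that the $C$-part contributes a genuine free direct summand rather than introducing relations or torsion, which follows from the triviality of $C\cap(G'\times\cdots\times G')$ in the semidirect decomposition and the freeness of $C$.
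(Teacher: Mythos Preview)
Your proof is correct and follows essentially the same approach as the paper: part~(i) via cyclicity together with Lemma~\ref{lem:order comm}, part~(ii) by passing through $\psi(G')/\psi(G'')$ using Theorem~\ref{thm:semidirect}, and part~(iii) as an immediate consequence of~(i) and~(ii). The only cosmetic difference is that the paper writes the quotient in~(ii) directly as the direct product $(G'/\gamma_3(G))^p\times C$ rather than invoking the split exact sequence, but the content is the same.
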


\begin{proof} 
(i) We observe that $G'/\gamma_3(G)$ is cyclic and generated by the image of $[a,b]$.
From Lemma~\ref{lem:order comm}, the order of $[a,b]$ tends to infinity modulo
$\gamma_3(G)\St_{G'}(n)$ as $n$ goes to infinity.
Hence the statement immediately follows. 

(ii) The result follows from (i) and from Theorem~\ref{thm:semidirect} since we have
\begin{align*}
\frac{G'}{G''}
\cong
\frac{\psi(G')}{\psi(G'')}
&\cong
\left( \frac{G'}{\gamma_3(G)} \times \overset{p}{\cdots} \times \frac{G'}{\gamma_3(G)} \right)
\times C
\\
&\cong
\mathbb{Z} \times \overset{p}{\cdots} \times \mathbb{Z}\times \mathbb{Z}^{p-1}.
\end{align*}
This completes the proof since (iii) is a straightforward consequence of (i) and (ii).
\end{proof}

\begin{cor}\label{cor:Heisenberg}
Let $G$ be a $p$-Basilica group, for $p$ a prime. Then $G/\gamma_3(G)$ is isomorphic to the integral Heisenberg group~$H_3(\mathbb{Z})$.
\end{cor}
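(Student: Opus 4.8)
The plan is to identify $G/\gamma_3(G)$ explicitly with the integral Heisenberg group $H_3(\mathbb{Z})$ by pinning down its structure as a central extension. Recall that
\[
H_3(\mathbb{Z}) = \langle x, y, z \mid [x,y]=z,\ [x,z]=[y,z]=1 \rangle,
\]
so it is a torsion-free nilpotent group of class $2$ with infinite cyclic centre $\langle z \rangle$, infinite cyclic abelianisation $\mathbb{Z}^2$, and it fits into a central extension $1 \to \mathbb{Z} \to H_3(\mathbb{Z}) \to \mathbb{Z}^2 \to 1$. My strategy is to verify that $Q := G/\gamma_3(G)$ satisfies exactly these same data and then invoke the standard classification of such extensions.

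First I would record the structural facts already available from the excerpt. By Theorem~\ref{thm:G/G'}(ii) we have $G/G' \cong \mathbb{Z} \times \mathbb{Z}$, generated by the images of $a$ and $b$; since $\gamma_3(G) \le G'$, the quotient $Q$ has abelianisation $Q/Q' = Q/(G'/\gamma_3(G)) \cong \mathbb{Z}^2$. By definition $\gamma_3(G) = [G',G]$, so $G'/\gamma_3(G)$ is central in $Q$, and by Theorem~\ref{thm:key}(i) it is infinite cyclic, generated by the image of $[a,b]$. Thus $Q$ is nilpotent of class (at most) $2$, its centre contains $\langle [a,b]\gamma_3(G)\rangle \cong \mathbb{Z}$, and $Q$ fits into a central extension
\begin{equation*}
1 \longrightarrow \langle [a,b]\gamma_3(G) \rangle \longrightarrow Q \longrightarrow G/G' \longrightarrow 1
\end{equation*}
with kernel $\cong \mathbb{Z}$ and quotient $\cong \mathbb{Z}^2$.

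Next I would pin down the commutator pairing. Setting $x = a\gamma_3(G)$, $y = b\gamma_3(G)$, and $z = [a,b]\gamma_3(G)$, the commutator map induces a bilinear alternating form $G/G' \times G/G' \to G'/\gamma_3(G) \cong \mathbb{Z}$; on the generating pair it sends $(x,y) \mapsto z$, and $z$ generates the kernel. Because $z$ generates $G'/\gamma_3(G) \cong \mathbb{Z}$, this pairing is the standard (determinant) symplectic form on $\mathbb{Z}^2$ with values in $\mathbb{Z}$, which is precisely the pairing defining $H_3(\mathbb{Z})$. Hence the relations $[x,y]=z$, $[x,z]=[y,z]=1$ hold in $Q$, giving a surjective homomorphism $H_3(\mathbb{Z}) \twoheadrightarrow Q$.

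The step I expect to be the genuine obstacle — as opposed to the routine bookkeeping above — is arguing that this surjection is an \emph{isomorphism}, i.e.\ that $Q$ is not a proper quotient of $H_3(\mathbb{Z})$ obtained by introducing extra torsion into the centre. This is where the quantitative content of Theorem~\ref{thm:key} is essential: since $G'/\gamma_3(G) \cong \mathbb{Z}$ maps onto the centre $\langle z\rangle$ of $H_3(\mathbb{Z})$ and the two are abstractly isomorphic infinite cyclic groups with matching generators, the central kernels are identified, and since the quotients $G/G' \cong \mathbb{Z}^2$ likewise match, the five lemma applied to the two central extensions forces the surjection to be injective. Concretely, one compares the presentations directly: $H_3(\mathbb{Z})$ has every element uniquely expressible as $x^r y^s z^t$ with $r,s,t \in \mathbb{Z}$, and the map to $Q$ sends this to $a^r b^s [a,b]^t \gamma_3(G)$; injectivity amounts to showing these images are distinct, which follows because their images in $G/G' \cong \mathbb{Z}^2$ determine $(r,s)$ and then $[a,b]^t$ lies in the infinite cyclic central quotient $G'/\gamma_3(G)$, determining $t$. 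Therefore $Q \cong H_3(\mathbb{Z})$, completing the proof.
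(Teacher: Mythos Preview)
Your argument is correct and complete: you use exactly the two structural results established in the paper, namely $G/G'\cong\mathbb{Z}^2$ (Theorem~\ref{thm:G/G'}) and $G'/\gamma_3(G)\cong\mathbb{Z}$ generated by $[a,b]\gamma_3(G)$ (Theorem~\ref{thm:key}(i)), to identify $G/\gamma_3(G)$ with $H_3(\mathbb{Z})$ via the obvious surjection and a five-lemma/normal-form injectivity check. The paper itself gives no details here, merely citing the analogous arguments for $p=2$ in \cite{pcongruence} and \cite{Francoeur-paper}; your write-up is precisely the natural way to fill in those details and is essentially what those references do.
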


\begin{proof}
The proof is analogous to~\cite[Prop.~23]{pcongruence} or to~\cite[Prop.~4.8]{Francoeur-paper}, where it was proved for the case $p=2$, using different approaches.
\end{proof}

As noted in~\cite[Cor.~4.9]{Francoeur-paper},  the previous result yields an alternative proof that the $p$-Basilica groups are not branch. As every proper quotient of a branch group is virtually abelian and the integral Heisenberg group is not virtually abelian, the result follows.

\section{Congruence subgroup properties and Hausdorff dimension}\label{sec:CSP}

\subsection{Congruence subgroup properties}
Let $G$ be a $p$-Basilica group, for $p$ a prime.
Since $G/G'\cong \mathbb{Z}\times \mathbb{Z}$, the group~$G$ does not have the congruence subgroup property as  all quotients of $G$ by level stabilisers are $p$-groups.
In this subsection we show that $G$ has the $p$-congruence subgroup property ($p$-CSP for short) but not the weak congruence subgroup property.

We recall that if $\mathcal{G}$ is a subgroup of $\Aut(T)$ and $N \trianglelefteq \mathcal{G}$, we say that
$\mathcal{G}/N$ has the $p$- \textit{congruence subgroup property} (or that $\mathcal{G}$ has the $p$-\textit{congruence subgroup property modulo} $N$)  if every
$J \trianglelefteq \mathcal{G}$, such that $\mathcal{G}/J$ is a finite $p$-group and $N\leq J$, contains some level stabiliser of~$\mathcal{G}$.
According to \cite[Lem.~6]{pcongruence}, if $N\le M$ are two normal subgroups of $\mathcal{G}$
such that both $\mathcal{G}/M$ and $M/N$ have the $p$-CSP then also $\mathcal{G}/N$ has the
$p$-CSP.

We need a couple of lemmas before proving that the $p$-Basilica groups have the $p$-CSP.

\begin{lem}
\label{lem:p-CSP A/B}
Let $N$ and $\mathcal{G}$ be subgroups of $\Aut(T)$ with $N\trianglelefteq \mathcal{G}$ and
$\mathcal{G}/N$ free abelian of rank~$r$, for some $r\in\mathbb{N}$.
Suppose that, for large enough $n\in\mathbb{N}$, we have
\begin{equation}
\label{A/B}
\mathcal{G}/N\St_{\mathcal{G}}(n)
\cong
C_{p^{\lambda_1(n)}} \times \cdots \times C_{p^{\lambda_r(n)}},
\end{equation}
with $\lim_{n\to\infty} \lambda_i(n)=\infty$ for $1\le i\le r$.
Then $\mathcal{G}/N$ has the $p$-CSP.
\end{lem}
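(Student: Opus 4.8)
The plan is to unwind the definition of the $p$-congruence subgroup property for the quotient $\mathcal{G}/N$ and reduce it to the hypothesis~\eqref{A/B}. Recall that $\mathcal{G}/N$ has the $p$-CSP if every normal subgroup $J \trianglelefteq \mathcal{G}$ with $N \le J$ and $\mathcal{G}/J$ a finite $p$-group contains some level stabiliser $\St_{\mathcal{G}}(n)$. So I would start by fixing such a $J$ and aiming to find an $n$ with $N\St_{\mathcal{G}}(n) \le J$; since $N \le J$, it then suffices to show $N\St_{\mathcal{G}}(n)/N \le J/N$ inside $\mathcal{G}/N$.

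First I would pass to the abelian group $Q = \mathcal{G}/N \cong \mathbb{Z}^r$ and work entirely there. The image $\bar{J} = J/N$ is a subgroup of $Q$ of $p$-power index, so $Q/\bar{J}$ is a finite abelian $p$-group. Write $\exp(Q/\bar{J}) = p^e$; then every element of $Q$ raised to the $p^e$ is in $\bar{J}$, that is, $Q^{p^e} \le \bar{J}$, where $Q^{p^e}$ denotes the subgroup of $p^e$-th powers. Since $Q\cong\mathbb{Z}^r$ is free abelian, $Q/Q^{p^e} \cong (C_{p^e})^r$ is finite, and $Q^{p^e}$ has finite index. The whole task therefore reduces to producing some $n$ with $N\St_{\mathcal{G}}(n)/N \le Q^{p^e}$, because then $N\St_{\mathcal{G}}(n)/N \le Q^{p^e} \le \bar J$, hence $N\St_{\mathcal{G}}(n)\le J$.

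This is exactly where hypothesis~\eqref{A/B} enters. The subgroup $N\St_{\mathcal{G}}(n)/N$ is precisely the kernel of the natural surjection $Q = \mathcal{G}/N \twoheadrightarrow \mathcal{G}/N\St_{\mathcal{G}}(n)$, and by~\eqref{A/B} the target is $C_{p^{\lambda_1(n)}} \times \cdots \times C_{p^{\lambda_r(n)}}$ for $n$ large. Since the source $\mathbb{Z}^r$ and the target both have rank $r$ (matching number of cyclic factors), the quotient map is, after choosing compatible bases, multiplication by $p^{\lambda_i(n)}$ on the $i$-th coordinate; thus $N\St_{\mathcal{G}}(n)/N$ is the subgroup generated by the $p^{\lambda_i(n)}$-th powers of the basis elements. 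I would make this precise by invoking the structure theorem for finitely generated abelian groups: a surjection $\mathbb{Z}^r \to \bigoplus_i C_{p^{\lambda_i(n)}}$ has kernel containing $Q^{p^{\mu(n)}}$ whenever $p^{\mu(n)}$ kills the target, where $\mu(n) = \max_i \lambda_i(n)$. Because $\lim_{n\to\infty}\lambda_i(n) = \infty$ for every $i$, we have $\min_i \lambda_i(n) \to \infty$, so for $n$ large enough each $\lambda_i(n) \ge e$; choosing such an $n$ gives $N\St_{\mathcal{G}}(n)/N \le Q^{p^e}$, completing the reduction and hence the proof.

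The genuinely delicate point, rather than any single calculation, is establishing that the kernel $N\St_{\mathcal{G}}(n)/N$ contains $Q^{p^{\mu(n)}}$ with $\mu(n)$ controlled by $\max_i \lambda_i(n)$: one must be careful that~\eqref{A/B} gives an isomorphism of the \emph{quotient} group but does not a priori tell us how the kernel sits inside $Q$ relative to a fixed basis. The clean way around this is to note that in any finite abelian $p$-group of exponent $p^{\mu(n)}$, the $p^{\mu(n)}$-th power map is trivial, so $Q^{p^{\mu(n)}}$ automatically maps to $0$ and thus lies in the kernel regardless of the chosen identification; this sidesteps the need to track bases explicitly and is the crux that makes the argument go through.
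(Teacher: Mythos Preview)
Your overall approach is correct and essentially matches the paper's: fix $J$, pass to $Q=\mathcal{G}/N\cong\mathbb{Z}^r$, observe $Q^{p^e}\le J/N$ for $e$ the exponent of $Q/(J/N)$, and then show $N\St_{\mathcal{G}}(n)/N\le Q^{p^e}$ for $n$ large. The paper establishes this last containment by an index count---comparing $|\mathcal{G}/N:(\mathcal{G}/N)^{p^m}|$ with $|\mathcal{G}/N\St_{\mathcal{G}}(n):(\mathcal{G}/N\St_{\mathcal{G}}(n))^{p^m}|$---while you use Smith normal form; both are valid.

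However, your final paragraph has the key inclusion backwards, and this matters. You identify the ``crux'' as showing that the kernel $N\St_{\mathcal{G}}(n)/N$ \emph{contains} $Q^{p^{\mu(n)}}$ with $\mu(n)=\max_i\lambda_i(n)$, and claim this ``sidesteps the need to track bases.'' But that containment is the trivial direction---it follows immediately from the exponent of the quotient---and it is not what you need. What you need is that the kernel is \emph{contained in} $Q^{p^e}$, the opposite inclusion. That genuinely requires either your earlier Smith-normal-form sentence (the kernel is generated by $p^{\lambda_i(n)}e_i$ in a suitable basis, hence lies in $p^e\mathbb{Z}^r$ once $\min_i\lambda_i(n)\ge e$) or the paper's index comparison; the exponent observation alone does not give it. So the correct argument is present in your write-up, but your summary of ``the crux'' points at the wrong step, and a proof written from that summary alone would have a gap.
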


\begin{proof}
Let $N\le J\trianglelefteq \mathcal{G}$, where $|\mathcal{G}:J|=p^m$, for some $m\in\mathbb{N}$.
Then $\mathcal{G}^{p^m}\le J$.
Now choose an integer $n$ such that $\lambda_i(n)\ge m$ for $1\le i\le r$.
By~\eqref{A/B}, for large enough $n$ we have
\[
|\mathcal{G}/N\St_{\mathcal{G}}(n):(\mathcal{G}/N\St_\mathcal{G}(n))^{p^m}| = p^{rm}
= |\mathcal{G}/N:(\mathcal{G}/N)^{p^m}|,
\]
which implies $N\St_{\mathcal{G}}(n)\mathcal{G}^{p^m}=N\mathcal{G}^{p^m}$.
Hence $\St_\mathcal{G}(n)\le N\mathcal{G}^{p^m}\le J$, and
$\mathcal{G}/N$ has the $p$-CSP.
\end{proof}

\begin{lem}
\label{lem:p-CSP weakly}
Let $\mathcal{G}$ be a subgroup of $\Aut(T)$ that is weakly regular branch over a normal subgroup $K$.
Let $N$ be a normal subgroup of $\mathcal{G}$ such that:
\begin{enumerate}
\item
$K'\le N\le K$.
\item
If $L=\psi^{-1}(N\times \cdots \times N)$ then $G/N$, $N/L$, and $N/K'$ have the $p$-CSP.
\end{enumerate}
Then $\mathcal{G}$ has the $p$-CSP.
\end{lem}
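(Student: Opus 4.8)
The plan is to derive the full congruence subgroup property of $\mathcal{G}$ from that of $N$ via the transitivity principle, and then to establish the $p$-CSP of $N$ itself by feeding the two remaining hypotheses into an induction driven by the self-similar structure.

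First, I would apply \cite[Lem.~6]{pcongruence} to the chain $1\trianglelefteq N\trianglelefteq\mathcal{G}$: since $\mathcal{G}/N$ has the $p$-CSP, it suffices to prove that $N$, regarded as a subgroup of $\Aut(T)$, has the $p$-CSP. For this I would first record the structural facts that power the induction. Because $N\trianglelefteq\mathcal{G}$ and $\mathcal{G}$ is weakly regular branch over $K$ with $N\le K$, the map $\psi$ restricts to an isomorphism $L\to N\times\cdots\times N$, under which $\St_L(n+1)$ corresponds to $\St_N(n)\times\cdots\times\St_N(n)$; consequently $L/\St_L(n+1)\cong(N/\St_N(n))^{p}$ for every $n$. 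I would also note that $N\le K$ forces $N'\le K'\le N$, so that $N/K'$ is abelian: thus the hypothesis that $N/K'$ has the $p$-CSP controls exactly those $p$-power-index normal subgroups $J\trianglelefteq N$ that already contain $K'$.

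The heart of the argument is to show that every $J\trianglelefteq N$ of $p$-power index, say $|N:J|=p^{m}$, contains some $\St_N(n)$. The key difficulty is that the isomorphism $L\cong N\times\cdots\times N$ makes the $p$-CSP of $L$ \emph{equivalent} to that of $N$, so the naive reduction ``$N/L$ has the $p$-CSP, hence reduce $N$ to $L$'' is circular. The way I would break the circularity is an induction on the quantity $[\,K':K'\cap J\,]\le p^{m}$, using $K'$ as a descending base. Given $J$, I would apply the $p$-CSP of $N/L$ to trap a level stabiliser $\St_N(t)\le JL$; then pass to the first-level sections via $\psi$ and use the branch relation $K'\times\cdots\times K'\subseteq\psi(K')$ to see that, coordinate-wise, $J$ must absorb strictly more of $K'$, producing a new $p$-power-index $J_1\trianglelefteq N$ with $[\,K':K'\cap J_1\,]$ strictly smaller. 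Iterating at most $m$ times yields a subgroup that contains $K'$ modulo a level stabiliser, at which point the $p$-CSP of the abelian quotient $N/K'$ supplies an honest level stabiliser inside $J$.

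The main obstacle is precisely this descent step: verifying that trapping $\St_N(t)\le JL$ and recursing through the self-similar isomorphism genuinely forces $J$ to capture an extra portion of $K'$, and that the process terminates in a true containment $\St_N(n)\le J$ rather than an infinite regress aligned poorly with the level filtration. Once the terminating induction is in place, the two transitivity reductions (first $\mathcal{G}\to N$, then the base case through $N/K'$) assemble into the claimed $p$-CSP of $\mathcal{G}$, and the role of each of the three hypotheses on $\mathcal{G}/N$, $N/L$ and $N/K'$ is accounted for.
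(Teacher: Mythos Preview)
The paper does not actually give a proof of this lemma: it simply says that the statement is a slight generalisation of \cite[Thm.~1]{pcongruence} (the case where $L\le K'$) and that the proof is very similar, leaving the details to the reader. So there is no detailed argument to compare against; the question is only whether your plan reproduces the intended strategy.

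At the level of strategy your proposal is exactly on target: the reduction via \cite[Lem.~6]{pcongruence} using the hypothesis on $\mathcal{G}/N$, the identification $\psi:L\xrightarrow{\ \sim\ }N\times\cdots\times N$ carrying $\St_L(n+1)$ to $\St_N(n)^p$, the observation that $N/K'$ is abelian and handles the ``base case'', and the recognition that the real content is a terminating recursion through the self-similar structure --- all of this matches what one extracts from \cite{pcongruence}.

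The one place where your outline is vaguer than it should be is the descent step. Inducting on $[K':K'\cap J]$ and asserting that projecting $\psi(J\cap L)$ to a coordinate produces a $J_1$ with strictly smaller such index is not obviously correct: the projection $J_1$ need not be normal in $\mathcal{G}$, and there is no evident reason why $K'\cap J_1$ must strictly enlarge (the inclusion $K'\times\cdots\times K'\subseteq\psi(K')$ goes in the wrong direction for this). You flag this yourself as the main obstacle, and indeed it is. The cleaner and standard way to organise the recursion --- and almost certainly what \cite[Thm.~1]{pcongruence} does --- is to induct on $m$ to show that $\mathcal{G}/K^{(m)}$ has the $p$-CSP for every $m\ge 1$: the base case $m=1$ follows from the hypotheses on $\mathcal{G}/N$ and $N/K'$ by transitivity, and the inductive step uses the hypothesis on $N/L$ together with the branch relation $K^{(m)}\times\cdots\times K^{(m)}\subseteq\psi(K^{(m)})$ to pass from level to level. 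One then concludes because any $J\trianglelefteq\mathcal{G}$ of $p$-power index has $\mathcal{G}/J$ solvable, hence $K^{(m)}\le J$ for some $m$. This replaces your somewhat delicate index-tracking by a descent along a canonical filtration.
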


The last lemma is a slight generalisation of~\cite[Thm.~1]{pcongruence}, which corresponds to the case when $N$ is chosen so that $L\le K'$, and its proof is very similar.
We leave the details to the reader.

\begin{thm}
Let $G$ be a $p$-Basilica group, for a prime~$p$. Then $G$ has the $p$-CSP.
\end{thm}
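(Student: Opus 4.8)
The plan is to apply Lemma~\ref{lem:p-CSP weakly} with $\mathcal{G}=G$ and $K=G'$, since by Theorem~\ref{thm:weakly-branch}(ii) the group $G$ is weakly regular branch over $G'$. The task therefore reduces to choosing a normal subgroup $N$ with $G''\le N\le G'$, setting $L=\psi^{-1}(N\times\cdots\times N)$, and verifying that the three quotients $G/N$, $N/L$, and $N/G''$ all have the $p$-CSP. The natural candidate, guided by the structural results of Section~\ref{sec:commutator}, is $N=\gamma_3(G)$, because we have precise information about $G/\gamma_3(G)$ (it is the Heisenberg group by Corollary~\ref{cor:Heisenberg}, with $G/G'\cong\mathbb{Z}\times\mathbb{Z}$ and $G'/\gamma_3(G)\cong\mathbb{Z}$ by Theorem~\ref{thm:key}) and about $\gamma_3(G)/G''\cong\mathbb{Z}^{2p-2}$.

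First I would establish the $p$-CSP of $G/\gamma_3(G)$ by chaining two applications of Lemma~\ref{lem:p-CSP A/B} together with the quotient lemma \cite[Lem.~6]{pcongruence}. Concretely, $G/G'\cong\mathbb{Z}^2$ is free abelian, and by Theorem~\ref{thm:n-order}(ii) the quotient $G/G'\St_G(n)$ is $C_{p^{\beta(n-1)}}\times C_{p^{\beta(n)}}$, whose exponents tend to infinity; hence $G/G'$ has the $p$-CSP. Similarly $G'/\gamma_3(G)\cong\mathbb{Z}$ is free abelian of rank~$1$, and Lemma~\ref{lem:order comm} shows that the order of $[a,b]$ modulo $\gamma_3(G)\St_{G'}(n)$ grows without bound, giving the hypothesis \eqref{A/B} with a single exponent $\lambda_1(n)\ge\beta(n-1)\to\infty$; so $G'/\gamma_3(G)$ has the $p$-CSP. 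Combining these via \cite[Lem.~6]{pcongruence} yields the $p$-CSP for $G/\gamma_3(G)$. For the quotient $N/K'=\gamma_3(G)/G''\cong\mathbb{Z}^{2p-2}$, which is again free abelian of finite rank, I would once more invoke Lemma~\ref{lem:p-CSP A/B}, checking that the relevant congruence quotients have all exponents tending to infinity using Theorem~\ref{thm:n-order} and Theorem~\ref{thm:key}(iii).

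The remaining and most delicate verification is the $p$-CSP of $N/L$ with $N=\gamma_3(G)$ and $L=\psi^{-1}(\gamma_3(G)\times\cdots\times\gamma_3(G))$. By Theorem~\ref{thm:semidirect}(ii) we have $\psi(G'')=\gamma_3(G)\times\cdots\times\gamma_3(G)$, so $L=\psi^{-1}(\psi(G''))=G''\St_G(1)$ intersected appropriately; unwinding this identification shows that $\gamma_3(G)/L$ is a finite abelian $p$-group (indeed a section closely tied to $C$ and the $y_i$ appearing in Theorem~\ref{thm:semidirect}(iii)). Since any finite $p$-group trivially has the $p$-CSP — a normal subgroup of $p$-power index already contains the relevant level stabiliser once $n$ is large, as finiteness forces stabilisation — this quotient poses no growth obstruction; the work is purely in pinning down $L$ precisely via $\psi$ and confirming $\gamma_3(G)/L$ is finite.

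The hard part will be the bookkeeping in identifying $L$ and confirming the inclusion $G''\le\gamma_3(G)\le G'$ interacts correctly with the branching map $\psi$, so that Lemma~\ref{lem:p-CSP weakly} genuinely applies — in particular checking $K'=G''\le N=\gamma_3(G)$ (immediate) and that $N/L$ is finite rather than merely finitely generated. All the genuinely infinite quotients ($G/G'$, $G'/\gamma_3(G)$, $\gamma_3(G)/G''$) are free abelian with congruence exponents provably tending to infinity, so Lemma~\ref{lem:p-CSP A/B} dispatches each of them; the only finite quotient is handled trivially. Thus the proof is essentially an orchestration of the two $p$-CSP lemmas over the four-term filtration $G''\le\gamma_3(G)\le G'\le G$, with the main care needed in the $N/L$ step.
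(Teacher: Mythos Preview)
Your identification of $L$ is wrong, and this undermines the ``finite $p$-group'' claim. With $N=\gamma_3(G)$ you have $L=\psi^{-1}(\gamma_3(G)\times\cdots\times\gamma_3(G))$; but Theorem~\ref{thm:semidirect}(ii) says $\psi(G'')=\gamma_3(G)\times\cdots\times\gamma_3(G)$, and since $\psi$ is injective on $\St_G(1)$ one checks (using $\psi(\St_G(1))=(B\times\cdots\times B)\langle(a,\ldots,a)\rangle$ and Theorem~\ref{thm:G/G'}) that no element of $\St_G(1)\setminus G''$ maps into $\gamma_3(G)^p$. Hence $L=G''$ exactly, and $N/L=\gamma_3(G)/G''\cong\mathbb{Z}^{2p-2}$ is \emph{not} a finite $p$-group. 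So your ``trivial'' step is in fact the same infinite free-abelian quotient as $N/K'$, and the whole proof collapses to: show that $G/\gamma_3(G)$ and $\gamma_3(G)/G''$ have the $p$-CSP.

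The first of these you handle correctly. For the second you only gesture at Lemma~\ref{lem:p-CSP A/B}, but to apply it you must show that \emph{all} $2p-2$ invariant factors of $\gamma_3(G)/G''\St_{\gamma_3(G)}(n)$ tend to infinity. This requires computing $\psi(\St_{\gamma_3(G)}(n))$ inside $\psi(\gamma_3(G))=\langle y_0,\ldots,y_{p-2}\rangle\ltimes D$ from Theorem~\ref{thm:semidirect}(iii), and the mixing of the $c_i$ and the $[a,b^{-1}]$ factor in $y_0$ makes this genuinely delicate; neither Theorem~\ref{thm:n-order} nor Theorem~\ref{thm:key}(iii) gives it directly. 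The paper avoids this by taking $N=G'$ instead: then $L=\psi^{-1}(G'\times\cdots\times G')$ sits strictly between $G''$ and $G'$, and the two quotients $G'/L\cong C\cong\mathbb{Z}^{p-1}$ and $L/G''\cong (G'/\gamma_3(G))^p\cong\mathbb{Z}^p$ have transparently computable congruence quotients via Lemma~\ref{lem:psi stG'} and Lemma~\ref{lem:order comm} respectively. Your choice $N=\gamma_3(G)$ forfeits this splitting and leaves you with a single rank-$(2p-2)$ quotient whose congruence structure you have not analysed.
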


\begin{proof}
We apply Lemma~\ref{lem:p-CSP weakly} with $K=N=G'$.
Thus if $L=\psi^{-1}(G'\times \cdots \times G')$ then it suffices to prove that $G/G'$, $G'/L$ and $L/G''$ have the $p$-CSP.
Note that then also $G'/G''$ has the $p$-CSP.

First of all, the factor group~$G/G'$ has the $p$-CSP by Lemma~\ref{lem:p-CSP A/B}, since
$G/G'\St_G(n)\cong C_{p^{\beta(n-1)}}\times C_{p^{\beta(n)}}$ with 
$\beta(n)=\lceil n/2 \rceil$, according to Theorem~\ref{thm:n-order}(ii).

Next we deal with $G'/L$, which is free abelian of rank~$p-1$ by Theorem~\ref{thm:semidirect}(i).
By Lemma~\ref{lem:psi stG'}, we have
$\psi(L\St_{G'}(n))=(G'\times \cdots \times G')C^{p^{\beta(n-1)}}$ and consequently
$G'/L\St_{G'}(n)\cong C/C^{p^{\beta(n-1)}}$.
Hence this case also follows from Lemma~\ref{lem:p-CSP A/B}.

Let us finally consider the case of $L/G''$.
We have 
\[
L/G''\cong (G'\times \cdots \times G')/(\gamma_3(G)\times \cdots \times \gamma_3(G))
\cong \mathbb{Z}^p.
\]
Since
\[
\psi(G''\St_L(n))=\gamma_3(G)\St_{G'}(n-1)\times \cdots \times \gamma_3(G)\St_{G'}(n-1),
\]
it follows that
\[
L/G''\St_L(n) \cong G'/(\gamma_3(G)\St_{G'}(n-1))\times \cdots \times G'/(\gamma_3(G)\St_{G'}(n-1)),
\]
and we can once again apply Lemma~\ref{lem:p-CSP A/B}, by taking into account
Lemma~\ref{lem:order comm}.
\end{proof}

We note that in~\cite[Thm.~1.10]{PR} it was shown that $s$-generator Basilica groups, for $s>2$, have the $p$-CSP. It is worth mentioning that there are key structural differences between these groups and the $p$-Basilica groups; compare \cite[Thm.~1.9]{PR}.

Now we complete the proof of Theorem~\ref{thm:CSP}. 

\begin{thm}
Let $G$ be a $p$-Basilica group, for a prime~$p$. Then $G$ does not have the weak congruence subgroup property.
\end{thm}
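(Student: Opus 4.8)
The plan is to show that $G$ fails the weak congruence subgroup property by exhibiting a finite-index subgroup that contains no derived subgroup $\St_G(n)'$ of any level stabiliser. The natural candidate is $G''$ itself, or more precisely a finite-index subgroup sitting between $G''$ and $G'$. Recall that $G$ has the weak congruence subgroup property if and only if every finite-index subgroup contains $\St_G(n)'$ for some $n$. By Theorem~\ref{thm:derived stabilisers}(i), we know exactly what $\St_G(n)'$ is: its image under $\psi_n$ is $G'\times\overset{p^n}{\cdots}\times G'$. So the strategy is to find a finite-index subgroup $H$ of $G$ with $G''\le H$ such that $\St_G(n)'\not\le H$ for every $n$.

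First I would reduce the problem to the abelianised data. Since $G/G'\cong\mathbb{Z}^2$ is infinite, $G$ certainly has finite-index subgroups not containing $G'$; but to defeat the \emph{weak} CSP I must produce a finite-index subgroup missing all the $\St_G(n)'$. The key observation is that, by Theorem~\ref{thm:key}(ii), $G'/G''\cong\mathbb{Z}^{2p-1}$ is free abelian of positive rank, so $G'$ has proper finite-index subgroups containing $G''$. I would pick a subgroup $M$ with $G''\le M\le G'$ of index $p$ in $G'$, chosen so that $M$ avoids the specific generator of $G'/\gamma_3(G)$ coming from $[a,b]$. The point is that each $\St_G(n)'$ maps, under $\psi_n$, onto a full direct product $G'\times\cdots\times G'$, and composing with the product homomorphism into $G'/\gamma_3(G)$ (the map sending $(g_1,\dots,g_{p^n})$ to $g_1\cdots g_{p^n}\gamma_3(G)$, which is well defined since $\psi_n(G'')$ lands in the product of $\gamma_3(G)$'s by Theorem~\ref{thm:semidirect}(ii)) shows that $\St_G(n)'$ surjects onto $G'/\gamma_3(G)\cong\mathbb{Z}$. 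In particular $[a,b]$, generating $G'/\gamma_3(G)$, has a nontrivial $p$-divisible image in each $\St_G(n)'\gamma_3(G)/\gamma_3(G)$, forcing $\St_G(n)'$ to contain elements arbitrarily deep in the $\mathbb{Z}$-direction of $G'/\gamma_3(G)$ that cannot lie in a fixed finite-index $M$ transverse to that direction.

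Concretely, I would combine Lemma~\ref{lem:order comm} with Theorem~\ref{thm:key}(i): the order of $[a,b]$ modulo $\gamma_3(G)\St_{G'}(n)$ tends to infinity, and $G'/\gamma_3(G)\cong\mathbb{Z}$. Since $\St_G(n)'\le\St_{G'}(n)$ and $\psi_n$ realises $\St_G(n)'$ as the whole product $G'\times\cdots\times G'$, one checks that the image of $\St_G(n)'$ in $G'/\gamma_3(G)$ is \emph{all} of $G'/\gamma_3(G)$, or at least an unbounded subgroup as $n\to\infty$. Hence if I fix any finite-index subgroup $H$ of $G$ whose intersection with $G'$ has image of index $>1$ in $G'/\gamma_3(G)\cong\mathbb{Z}$ — equivalently, $H\cap G'$ meets the $\langle[a,b]\rangle$-direction only in a proper subgroup — then $H$ cannot contain $\St_G(n)'$ for any $n$, since the latter surjects onto (or covers an unbounded part of) $G'/\gamma_3(G)$. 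Such an $H$ exists and has finite index because $G/\gamma_3(G)\cong H_3(\mathbb{Z})$ by Corollary~\ref{cor:Heisenberg}, whose abelianisation is $\mathbb{Z}^2$ but which nonetheless admits finite-index subgroups cutting down the central $\mathbb{Z}=G'/\gamma_3(G)$.

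The main obstacle, and the step requiring the most care, is verifying that the image of $\St_G(n)'$ in $G'/\gamma_3(G)$ genuinely fails to be contained in a \emph{fixed} proper finite-index subgroup as $n$ varies — in other words, that the covering of the central $\mathbb{Z}$-direction is uniform enough across all levels to defeat a single $H$. Here I would lean on the explicit structure in Theorem~\ref{thm:derived stabilisers}(i), namely $\psi_n(\St_G(n)')=G'\times\cdots\times G'$ exactly (not merely up to finite index), so that the product-homomorphism image is literally all of $G'/\gamma_3(G)$ for every $n\ge 1$. This makes the conclusion clean: any finite-index $H$ with $[G':H\cap G']$ not dividing out trivially in the $[a,b]$-direction will exclude every $\St_G(n)'$ simultaneously, and by Theorem~\ref{thm:key} such an $H$ plainly exists. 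I would close by noting that this $H$ witnesses the failure of the weak congruence subgroup property, completing the proof of Theorem~\ref{thm:CSP}.
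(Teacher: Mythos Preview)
There is a genuine gap in your argument, stemming from a conflation of two different maps $\St_G(n)'\to G'/\gamma_3(G)$. You argue that the composite
\[
\St_G(n)' \xrightarrow{\ \psi_n\ } G'\times\overset{p^n}{\cdots}\times G' \xrightarrow{\text{product}} G'/\gamma_3(G)
\]
is surjective, which is true, but this map is \emph{not} the natural inclusion $\St_G(n)'\hookrightarrow G'$ followed by the quotient $G'\to G'/\gamma_3(G)$. It is the latter map that matters for testing whether $\St_G(n)'\le H$ for a subgroup $H\ge \gamma_3(G)$, and its image is not all of $G'/\gamma_3(G)\cong\mathbb{Z}$. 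Indeed, using Corollary~\ref{cor:Heisenberg} and Theorem~\ref{thm:n-order}, the image of $\St_G(n)$ in $G/\gamma_3(G)\cong H_3(\mathbb{Z})$ is $\langle a^{p^{\beta(n-1)}},b^{p^{\beta(n)}},z\rangle$ for some central part~$z$, whose derived subgroup is $\langle [a,b]^{p^n}\rangle$. Hence the natural image of $\St_G(n)'$ in $G'/\gamma_3(G)$ is exactly $p^n\mathbb{Z}$, which \emph{shrinks} as $n$ grows.

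This kills your proposed choice of a subgroup of index $p$: if $H$ is any finite-index subgroup with $H\cap G'$ mapping onto $p^k\mathbb{Z}$ in $G'/\gamma_3(G)$, then for $n\ge k$ the image $p^n\mathbb{Z}$ of $\St_G(n)'$ lies inside $p^k\mathbb{Z}$, and your intended contradiction evaporates. The remedy is to cut down the central $\mathbb{Z}$ by an index \emph{coprime} to $p$: take a prime $q\ne p$ and let $N=\langle a^q,b^q,[a,b]^q\rangle\gamma_3(G)$, so that $G/N\cong H_3(\mathbb{Z}/q\mathbb{Z})$. Then the explicit element $[a^{p^{\beta(n-1)}},b^{p^{\beta(n)}}]\in\St_G(n)'$ from \eqref{eq:psi[a,b]modd} maps to $[a,b]^{p^n}\gamma_3(G)$, and since $\gcd(p^n,q)=1$ this forces $[a,b]\in N$, a contradiction. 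With this correction your outline becomes precisely the paper's proof.
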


\begin{proof}
Let $q\ne p$ be a prime, and let $N=\langle a^q,b^q,[a,b]^q\rangle \gamma_3(G)$, which is normal and of finite index in $G$.
By Corollary~\ref{cor:Heisenberg}, we have $G/N\cong H_3(q)$.
We claim that $\St_G(n)'\not\le N$ for every odd $n$, which is enough to prove the theorem.
Arguing by way of contradiction, since by Theorem~\ref{thm:derived stabilisers} we have
$\psi_n(\St_G(n)')=G'\times\overset{p^n}\cdots\times G'$, and according to \eqref{eq:psi[a,b]modd}
\[
\psi_n([a^{p^{\nicefrac{(n-1)}{2}}},b^{p^{\nicefrac{(n+1)}{2}}}])
=
(1,\overset{p^n-1}\ldots,1,[b,a]) \in G'\times\overset{p^n}\cdots\times G'
\]
for odd $n$, it follows that $[a^{p^{\nicefrac{(n-1)}{2}}},b^{p^{\nicefrac{(n+1)}{2}}}]\in N$.
As $\gamma_3(G)\le N$, we get $[a,b]^{p^n}\in N$.
Since also $[a,b]^q\in N$, we conclude that $[a,b]\in N$.
This contradicts the fact that $G/N\cong H_3(q)$.
\end{proof}

\subsection{Hausdorff dimension}\label{sec:Hausdorff}

In this subsection we determine the orders of the congruence quotients of the $p$-Basilica groups, and we compute their Hausdorff dimensions.

\begin{proof}[Proof of Theorem~\ref{thm:hdim}]
(i) We argue by induction on $n$.
The case $n=1$ is clear, so we assume $n\ge 2$.
Write $n=2m+e$, with $e=0$ or $1$. We need to establish that
\[
\log_p |G:\St_G(n)|
=
\frac{p^{n+1}-p^{1+e}}{p^2-1} + m+e.
\]
Note that, by Theorem~\ref{thm:n-order},
\[
|G:\St_G(n)|
=
|G:G'\St_G(n)| \, |G'\St_G(n):\St_G(n)|
=
p^n \, |G':\St_{G'}(n)|
\]
and that $|G':\St_{G'}(n)|$ coincides with
\begin{align*}
|\psi(G'):\psi(\St_{G'}(n))|
&=
|(G'\times \overset{p}{\cdots} \times G')C:
(\St_{G'}(n-1)\times \overset{p}{\cdots} \times \St_{G'}(n-1))C^{p^{\beta(n-1)}}|
\\
&=
p^{(p-1)\beta(n-1)} \, |G':\St_{G'}(n-1)|^p
\\
&=
p^{(p-1)\beta(n-1)-p(n-1)} \, |G:\St_G(n-1)|^p,
\end{align*}
where we have used Lemma \ref{lem:psi stG'} and the fact that $C$ is free abelian of rank $p-1$.
Here $\beta(n-1)=\lceil \nicefrac{(n-1)}{2}\rceil$ as before.
Thus
\begin{align*}
\log_p |G:\St_G(n)|
&=
p \log_p |G:\St_G(n-1)| + (p-1)(\beta(n-1)-n+1)+1
\\
&=
p \log_p |G:\St_G(n-1)| - (p-1)(m+e-1)+1,
\end{align*}
since $\beta(n-1)=m$.
Now the result follows from the induction hypothesis.

(ii) In order to get the Hausdorff dimension of $\overline G$ in $\Gamma$, we just need to take into account formula (\ref{eqn:hausdorff dim}) and the fact that
\[
\log_p |\Gamma:\St_{\Gamma}(n)|
=
1+p+\cdots+p^{n-1}
=
\frac{p^n-1}{p-1}.\qedhere
\]
\end{proof}

We remark that the Hausdorff dimension of the Basilica group was given by Bartholdi in~\cite{Bartholdi}.
Also the Hausdorff dimensions of the generalised Basilica groups were computed
in~\cite[Thm.~1.7]{PR} by using a very different approach.


\section{Further properties}\label{sec:growth}

\subsection{Growth and amenability}

Before proving the main results of this subsection, we need some preliminary definitions, namely the notions of growth of groups and amenability.

Let~$\mathcal{G}$ be a group generated by a finite symmetric subset~$S$. 
The length function on $\mathcal{G}$ is a metric on~$\mathcal{G}$ and therefore one can define the ball of radius $n$:
\[
B(n) = \{g \in \mathcal{G}:|g|\leq n\}.
\]
We say that the map $\gamma: \mathbb{N}_{0} \longrightarrow [0, \infty)$ where $\gamma(n) = |B(n)|$, is the \textit{growth function} of~$\mathcal{G}$.

If we consider two growth functions $\gamma_{1}, \gamma_{2}$, we say that $\gamma_{2}$ \textit{dominates} $\gamma_{1}$ and we write $\gamma_{1} \preceq \gamma_{2}$ if there exist $C, \alpha > 0$ such that $\gamma_{1}(n) \leq C\gamma_{2}(\alpha n)$ for every $n \in \mathbb{N}$. If $\gamma_{1} \preceq \gamma_{2}$ and $\gamma_{2} \preceq \gamma_{1}$, we write $\gamma_{1} \sim \gamma_{2}$. It is easy to see that this is an equivalence relation. Notice also that all growth functions of a finitely generated group are equivalent. 

If $\gamma(n) \preceq n^{a}$ for some~$a\in\mathbb{N}$, we say that~$\mathcal{G}$ has \textit{polynomial growth}. Instead~$\mathcal{G}$ is said to have \textit{exponential growth} if 
$\lim_{n\to\infty} \gamma(n)^{1/n} > 1$
(notice that such a limit always exists). Finally  $\gamma(n)$ has \textit{intermediate growth} if $\gamma(n)$ is equivalent to neither of the above. Notice that it is also common to say that a group~$\mathcal{G}$ has \textit{subexponential growth} if 
$\lim_{n\to\infty} \gamma(n)^{1/n} = 1$, or equivalently,
if $\gamma(n) = e^{f(n)}$ for some (increasing) function $f: \mathbb{N} \longrightarrow \mathbb{R}^{+}$ satisfying $\lim_{n\to \infty} f(n)/n = 0$.

\smallskip

Next, we say that a group~$\mathcal{G}$ is \textit{amenable} if there is a finitely additive left-invariant
measure~$\mu$ on the subsets of $\mathcal{G}$ such that $\mu(\mathcal{G}) = 1$.
We denote the class of amenable groups by~$AG$.
The class~$EG$ of \textit{elementary amenable} groups is the smallest class of groups containing all abelian groups and finite groups and closed under quotients, subgroups, extensions and direct unions.
We have $EG \subseteq AG$, and this inclusion is strict.
Furthermore the class~$SG$ of \textit{elementary subexponentially amenable} groups is the smallest class of groups which contains all groups of subexponential growth and is closed under taking subgroups, quotients, extensions, and direct unions.
Of course, the class~$SG$ contains the class~$EG$.

In the following we determine the growth of a $p$-Basilica group $G$, for $p$ an odd prime, and we prove that $G$ is amenable but not elementary subexponentially amenable. The corresponding versions of Theorem~\ref{thm:exponentialgrowth} and Lemma~\ref{lem:amenable} for $p=2$ were proved in~\cite[Lem.~4 and Prop.~4]{Zuk} and in~\cite[Cor.~9]{Juschenko}, respectively.

\begin{thm}\label{thm:exponentialgrowth}
Let $G=\langle a,b\rangle$ be a $p$-Basilica group, for $p$ an odd prime. Then the semigroup generated by $a$ and $b$ is free. Consequently, the group $G$ is of exponential  growth.
\end{thm}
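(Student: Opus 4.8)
The plan is to prove that the semigroup generated by $a$ and $b$ is free, from which exponential growth is immediate: a free semigroup on two generators has exactly $2^n$ words of length $n$, so the growth function satisfies $\gamma(n)\ge 2^n$, forcing $\lim_{n\to\infty}\gamma(n)^{1/n}\ge 2>1$. Thus the heart of the matter is freeness of the semigroup, and I would argue by contradiction. Suppose there exist two distinct positive words $u=s_1\cdots s_k$ and $v=t_1\cdots t_\ell$ in the letters $a,b$ representing the same element of $G$. After cancelling a common prefix we may assume $s_1\ne t_1$, so one of the words begins with $a$ and the other with $b$; write the relation as $au_1=bv_1$ (or handle the degenerate cases where one side is empty separately), where $u_1,v_1$ are again positive words.

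The key tool is the section/portrait structure recorded by $\psi$, together with the contracting property. First I would reduce everything modulo the first-level action: since $\psi(a)=(1,\dots,1,b)$ fixes the first level while $\psi(b)=(1,\dots,1,a)\sigma$ induces the $p$-cycle $\sigma$, counting occurrences of $b$ modulo $p$ in a positive word computes its image in $G_1\cong C_p$. So if two positive words are equal in $G$, their numbers of $b$'s are congruent modulo $p$. More usefully, I would track the permutation at the root: a positive word $w$ acts on the first level as $\sigma^{m}$, where $m$ is the number of $b$'s in $w$. Equating the root permutations of the two sides of a relation gives a congruence on the $b$-counts, and then I would pass to sections. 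Given a relation between positive words that fixes the root, applying $\psi$ expresses each section at a first-level vertex as a \emph{positive} word in $a,b$ that is strictly shorter than the original (this is exactly where the contracting estimate of Theorem~\ref{thm: contracting} enters, guaranteeing the induction terminates). The induction hypothesis, that no nontrivial positive relation of smaller total length exists, then yields a contradiction.

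The main obstacle is organising the descent so that positivity is preserved under taking sections and that the length genuinely drops. When I apply $\psi$ to a positive word $w$, the sections $w_{x_1},\dots,w_{x_p}$ are obtained by reading off the blocks of $w$ between successive occurrences of $b$; because $\psi(a)$ and $\psi(b)$ place only $a$'s and $b$'s in their components (never inverses), each section is again a positive word, and the total letter-count is preserved while being distributed among $p$ sections, so at least one section has length at most $|w|/p<|w|$ once $|w|$ is large. The delicate point is the bookkeeping of \emph{which} section a given letter lands in, governed by the running count of $b$'s modulo $p$, and ensuring that from a genuine relation $au_1=bv_1$ one extracts a genuine shorter relation rather than a trivial identity; this requires checking that the two sides cannot collapse to the same shorter positive word without the original words already agreeing. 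Carefully setting up this induction on length, with the base case handled by the explicit computation in $G_2$, completes the argument.

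\begin{comment}
Alternative framing: one can phrase freeness of the semigroup via a ping-pong style argument on the boundary of the tree, using that $a$ and $b$ act without common fixed structure; but the sectional descent above is cleaner and dovetails with the contracting estimate already established.
\end{comment}
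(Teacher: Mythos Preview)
Your overall plan---minimal counterexample, pass to first-level sections, observe that sections of positive words are again positive words---matches the paper's approach. But there is a genuine gap at the step where you assert that each section is \emph{strictly} shorter than the original word. This is false: for $w=a^{m}b$ one has $\psi(w)=(1,\ldots,1,b^{m}a)\sigma$, so the section at $x_p$ has length $m+1=|w|$. (More generally, the total letter count is preserved and distributed among the $p$ sections; a section fails to shrink precisely when \emph{all} letters land in the same slot, which happens exactly for $w=a^{m}$ and $w=a^{m}b$.) Your appeal to Theorem~\ref{thm: contracting} does not help here: the contracting inequality concerns the \emph{group} length with respect to the symmetric generating set $\{a^{\pm 1},b^{\pm 1}\}$, not the positive word length, and it only kicks in at level~$2$ with an additive constant, so it neither guarantees a strict decrease at level~$1$ nor preserves positivity.

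The paper confronts exactly this obstruction. It organises the induction on $\rho=\max(|u|,|v|)$ and splits according to $|u|_b\pmod p$. When $|u|_b=0$ one side is $a^{i}$ and the other, having $b$-count a positive multiple of~$p$, has a nonempty section at $x_1$ where $a^{i}$ has the empty section, yielding a shorter relation. When $|u|_b\equiv 1\pmod p$, the only way a section fails to shrink is if the word is literally $a^{m}b$; the paper then observes that the resulting section $b^{m}a$ does have strictly shorter sections, so one more descent finishes. For $|u|_b\equiv k\pmod p$ with $k\ne 0,1$, every section is strictly shorter. You correctly flag ``the delicate point'' in your last paragraph, but you do not resolve it; the resolution is precisely this case analysis on the $b$-count and the identification of $a^{m}b$ as the unique obstruction.
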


\begin{proof}
Let $u$ and $v$ be two different words representing the same element in the semigroup generated by $a$ and $b$, and with $\rho=\max(|u|,|v|)$ minimal. We note that $|u|_b\equiv_p |v|_b$, where $|u|_b$ denotes the $b$-length in $u$, which is equivalent to the number of occurrences of~$b$ in~$u$. A direct check shows that $\rho\ge 4$.

Suppose first that $u$ contains no $b$'s. Therefore we have $u=a^{i}=\psi^{-1}((1,\ldots,1,b^{i}))$, for some $i\in\mathbb{N}$. Since $|v|_b$ is a non-zero multiple of~$p$, one deduces that $v_1$ is a non-empty word, where $\psi(v)=(v_1,\ldots,v_p)$. Indeed, every component of $\psi(v)$ contains an~$a$. Certainly $|v_1|<\rho$, and this contradicts the minimality of~$\rho$. So the number of occurrences of~$b$ in~$u$ is at least one.

Suppose that both $|u|_b\equiv_p |v|_b\equiv_p 1$. Apart from the possibilities $a^mb$, for $m\in\mathbb{N}$, all sections of $u$ and $v$ will have length strictly less than~$|u|$ and~$|v|$ respectively. In order to not contradict the minimality of~$\rho$, we must have $u=a^{m_1}b=\psi^{-1}((1,\ldots,1,b^{m_1}a)\sigma)$ and $v=a^{m_2}b=\psi^{-1}((1,\ldots,1,b^{m_2}a)\sigma)$ for some $m_1,m_2\in\mathbb{N}$. However, as noted above, all sections of $b^{m_1}a$ and $b^{m_2}a$ decrease in length. Hence $|u|_b\equiv_p|v|_b\equiv_p k$ for $k>1$. Now in this case, all sections of $u$ and $v$ have length strictly smaller than $u$ and $v$ respectively. This again contradicts the minimality of~$\rho$, and the proof is complete.
\end{proof}

\begin{lem}\label{lem:amenable}
For $p$ a prime, the $p$-Basilica group $G$ is amenable but not elementary subexponentially amenable. In particular it is not elementary amenable.
\end{lem}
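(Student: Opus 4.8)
The plan is to prove the two assertions separately, since amenability and the failure of elementary subexponential amenability are established by very different means. For amenability, I would not attempt a direct construction of an invariant mean. Instead, the natural strategy is to invoke the general machinery for groups acting on rooted trees: the $p$-Basilica group $G$ is a finitely generated, contracting (Theorem~\ref{thm: contracting}), self-similar group, and such groups fall within the scope of amenability results for self-similar and automaton groups. For $p=2$, amenability of the Basilica group was established by Bartholdi--Vir\'ag and later clarified via the theory of groups generated by bounded automata (the class shown amenable by Juschenko); indeed the excerpt cites \cite{Juschenko} for the $p=2$ case. The generators $a$ and $b$ of $G$ have activity growth that is bounded, so $G$ is generated by bounded automata, and I would appeal to the theorem that every such group is amenable.

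For the second assertion, that $G$ is \emph{not} elementary subexponentially amenable, the key input is already in hand: by Theorem~\ref{thm:exponentialgrowth} the semigroup generated by $a$ and $b$ is free, so $G$ has exponential growth. This rules out $G$ itself lying in the class of subexponential-growth groups, but $SG$ is the smallest class containing all subexponential-growth groups and closed under subgroups, quotients, extensions and direct unions, so more is needed. The standard route is to exploit the weakly regular branch structure together with self-similarity. By Theorem~\ref{thm:weakly-branch}(ii), $G$ is weakly regular branch over $G'$, and hence $G'\times\overset{p}{\cdots}\times G'\subseteq\psi(\St_G(1)')\le\psi(G')$; since $G$ is just non-solvable (Corollary~\ref{cor: Just non solvable}) and has exponential growth, $G$ admits a copy of itself acting on the subtrees, so $G$ is \emph{self-replicating} in a way incompatible with membership in $SG$. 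I would use the known fact (essentially due to the transfinite construction of $SG$ and an induction on the associated ordinal rank) that a finitely generated, weakly branch group of exponential growth whose sections reproduce the whole group cannot lie in $SG$: one shows that if $G\in SG$, then comparing the elementary rank of $G$ with that of the direct product $G'\times\cdots\times G'$ embedded via $\psi$ forces a contradiction, because the rank cannot strictly decrease under passing to the branching sections while the group reappears.

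Concretely, I would argue by contradiction. Suppose $G\in SG$. Every group in $SG$ has a well-defined ordinal rank measuring how it is built from subexponential-growth groups via the four closure operations. Finitely generated groups of exponential growth have rank at least $1$. Using $G'\times\overset{p}{\cdots}\times G'\le\psi(G')\le\psi(G)\cong G$ (via $\psi$ restricted to $\St_G(1)$, an index-$p$ subgroup), together with the fact that subgroups and quotients do not increase rank, I would derive that the rank of $G'$, and hence of $G$, must be strictly smaller than itself, which is absurd. The clean statement of this rank-reduction argument for weakly branch groups is exactly what the $p=2$ case in \cite{Juschenko} relies upon, and the same reasoning transfers verbatim once one has free subsemigroups (hence exponential growth) and the weakly regular branch embedding. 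The final clause, that $G$ is not elementary amenable, is immediate since $EG\subseteq SG$.

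I expect the main obstacle to be the amenability half, not the non-elementary half: the non-elementary statement is a soft rank argument driven by Theorem~\ref{thm:exponentialgrowth} and Theorem~\ref{thm:weakly-branch}, but to claim amenability I must verify that $G$ genuinely belongs to a class known to be amenable. The delicate point is checking that the $p$-Basilica generators define a \emph{bounded} (rather than merely polynomial-activity) automaton for all odd $p$, so that the Juschenko--Nekrashevych--de la Harpe amenability theorem for bounded automata applies uniformly; this requires a short but genuine inspection of the portraits of $a$ and $b$ to confirm the activity of powers of the generators stays bounded along the tree.
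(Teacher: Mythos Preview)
Your amenability argument is the same as the paper's: both simply observe that $a$ and $b$ are bounded automorphisms and cite the amenability theorem for bounded automata groups (the paper cites \cite{BKN}; your attribution is slightly garbled but the mathematics is the same). So that half is fine.

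The gap is in the non-$SG$ half. Your ``soft rank argument'' does not go through as written. From $G'\times\cdots\times G'\le\psi(G')\le\psi(\St_G(1))$ you only get that $G'$ embeds in $G$, hence $\mathrm{rank}(G')\le\mathrm{rank}(G)$; nowhere do you produce a \emph{strict} inequality $\mathrm{rank}(G)<\mathrm{rank}(G)$. The transfinite induction you allude to requires that $G$ itself (not merely $G'$) reappears inside the branching subgroup, and this is precisely the hypothesis of \cite[Cor.~3]{Juschenko}: one needs a vertex $u$ with $\psi_u(\St_{G'}(u))\supseteq G$. That condition is \emph{not} automatic from ``weakly regular branch over $G'$ plus exponential growth''---at level one, $\psi_{x_p}(G')=\langle b\rangle G'=B\subsetneq G$, so one must go deeper. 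The paper carries out exactly this verification: it exhibits $[b^{-1},a]^p,[a,b^p]\in G'$ whose sections at $u=x_px_p$ are $a$ and $b$ respectively, so $\psi_u(\St_{G'}(u))=G$ and Juschenko's criterion applies. Note also that Theorem~\ref{thm:exponentialgrowth} plays no role in the paper's argument for this part; the criterion in \cite{Juschenko} does not take exponential growth as an input. You have the relative difficulty of the two halves inverted: amenability is a one-line citation, while the non-$SG$ assertion is where the actual computation lives.
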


\begin{proof}
For $p=2$, the result follows from~\cite[Prop.~13]{Zuk} and~\cite{BV05}. Hence we assume that $p$ is odd.
From~\cite{BKN} it follows that $G$ is amenable, so it suffices  to show that $G$ is not elementary subexponentially amenable. Since $G$ is weakly regular branch over $G'$, from \cite[Cor.~3]{Juschenko} the result follows provided that
$\psi_u(\St_{G'}(u))$ contains $G$ for some vertex $u$. We observe that
\[
    \psi([b^{-1},a]^p)=(1,\overset{p-2}{\ldots},1,b^{-p},b^p)\quad\text{and}\quad \psi([a,b^p])=(1,\overset{p-1}{\ldots},1,[b,a]),
\]
thus $\psi_{u}([b^{-1},a]^p)=a$ and $\psi_{u}([a,b^p])=b$, where $u=x_px_p$.
This completes the proof.
\end{proof}

\subsection{An $L$-presentation}

For an alphabet~$S$, we denote by $F_S$ the free group on~$S$. A group~$\mathcal{G}$ has an \emph{$L$-presentation}, also called \emph{endomorphic presentation}, if there exists an alphabet~$S$, sets $Q$ and $R$ of reduced words in~$F_S$, and a set~$\Phi$ of group homomorphisms $\phi:F_S \rightarrow F_S$ such that $\mathcal{G}$ is isomorphic to a group with the following presentation
\[
\big\langle S \mid  Q\cup \bigcup_{\phi\in \Phi^*} \phi(R) \big\rangle,
\]
where $\Phi^*$ is the monoid generated by~$\Phi$; that is, the closure of $\{1\}\cup \Phi$ under composition.

An $L$-presentation is \emph{finite} if $S$, $Q$, $R$ are finite and $\Phi=\{\phi\}$ consists of just one element.
Finite $L$-presentations have been computed for the first Grigorchuk group~\cite{Lysionok}, the Brunner-Sidki-Vieira group~\cite{BSV}, the Grigorchuk supergroup~\cite{BG99}, the Fabrykowski-Gupta group~\cite{endomorphic}, the Gupta-Sidki 3-group~\cite{endomorphic}, and the twisted twin of the Grigorchuk group~\cite{BS}.

An $L$-presentation for the Basilica group is given in~\cite{Zuk}, and this $L$-presentation is not finite, since the corresponding set~$\Phi$ consists of more than one element. This is also the case for the $L$-presentations of the $p$-generator Basilica groups acting on the $p$-adic tree and generalised Basilica groups; see~\cite{Sasse} and \cite{PR} respectively.

Following the strategy in~\cite[Sec.~4]{Zuk}, one can easily check that the $p$-Basilica groups have the following $L$-presentation.

\begin{thm}\label{pr:11}
Let $G$ be a $p$-Basilica group, for a prime~$p$. The group~$G$ has the presentation
\[
G=\big\langle a,b\mid \xi^k\big(\theta^m([a,a^{b^l}])\big)=1 \text{ for }k,m\in\mathbb{N}\cup\{0\} \text{ and }l\in\{1,\ldots,p-1\}\big\rangle,
\]
where 
\begin{align*}
    \xi:& \,a\mapsto b^p\qquad\text{and}\qquad\theta:a\mapsto a^{b^p+1}\\
    &\,b\mapsto a \,\,\quad\qquad\qquad\qquad b\mapsto b
\end{align*}
are endomorphisms of~$F_{\{a,b\}}$.
\end{thm}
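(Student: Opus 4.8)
The plan is to verify that the given $L$-presentation defines the $p$-Basilica group by following the standard strategy for such groups, as in Grigorchuk's treatment of the Basilica group in~\cite{Zuk}. The starting point is the observation that $G$ is self-similar and contracting (Theorem~\ref{thm: contracting}), so that relations can be detected and decomposed recursively by passing to sections. First I would identify the candidate relators. Writing $L=\{1,\ldots,p-1\}$, the basic relators are $[a,a^{b^l}]=1$, which reflect the fact that $\psi(a)=(1,\ldots,1,b)$ and its conjugates $\psi(a^{b^l})$ have pairwise disjoint supports at the first level and hence commute; this is precisely the computation underlying Lemma~\ref{lem: stabiliser}(ii), where $\St_{G_2}(1)$ is shown to be elementary abelian. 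The endomorphisms $\xi$ and $\theta$ of the free group $F_{\{a,b\}}$ are chosen so that their images, applied to the basic relators, generate (the normal closure of) all further relations obtained by iterating the recursive structure.

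The key mechanism is a lifting/decomposition argument. Let $\widetilde{G}$ denote the group defined by the proposed presentation, so there is a canonical surjection $\pi\colon\widetilde{G}\twoheadrightarrow G$ sending generators to generators; one must show $\pi$ is injective. The endomorphism $\theta$ is designed to mimic the substitution coming from the map $\psi_u$ at the distinguished vertex: since $\psi(b^p)=(a,\ldots,a)$ and $\psi(a)=(1,\ldots,1,b)$, conjugation and the section map at $u=x_p\cdots x_p$ translate a relation in the sections back into a relation of $\widetilde{G}$ via $\theta$, while $\xi$ (sending $a\mapsto b^p$, $b\mapsto a$) encodes the shift that interchanges the roles of $a$ and $b$ one level down, matching the recursion $a=(1,\ldots,1,b)$, $b=(1,\ldots,1,a)\sigma$. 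Concretely, I would show that the map $\psi$ descends to $\widetilde{G}$, i.e.\ that $\psi\circ\pi$ factors through an embedding $\widetilde{G}\hookrightarrow W_p(\widetilde{G})$ compatible with the one for $G$, so that any word in $\ker\pi$ decomposes into shorter words in $\ker\pi$ at the sections; contractivity then forces $\ker\pi=1$ by induction on length.

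The main obstacle I expect is the completeness direction: verifying that the chosen family $\{\xi^k(\theta^m([a,a^{b^l}]))\}$ is exactly large enough: every relation of $G$ is a consequence of these, and no relation of $G$ is missed. Showing sufficiency requires a careful inductive argument that an arbitrary relator of $G$, upon decomposition via $\psi$, yields relators of strictly smaller length (using the contraction constant $\lambda=\tfrac{2}{3}$ from Theorem~\ref{thm: contracting}) each of which, by the induction hypothesis, already lies in the normal closure of the $\xi^k\theta^m$-images; one then has to re-assemble these section-level relators into the claimed form using the precise action of $\xi$ and $\theta$. The bookkeeping of how $\theta$ and $\xi$ interact, in particular checking that $\theta(a)=a^{b^p+1}$ correctly captures the interplay between $\psi(a)$ and $\psi(b^p)=(a,\ldots,a)$ under the recursion, is the delicate part, but it is a mechanical verification once the decomposition framework is set up. Since the authors explicitly say the presentation follows the strategy of~\cite[Sec.~4]{Zuk} and can be \emph{easily checked}, I would present this as a sketch invoking that reference for the routine inductive details rather than reproducing the full computation.
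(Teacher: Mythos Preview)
Your proposal is correct and matches the paper's approach exactly: the paper gives no proof beyond the sentence ``Following the strategy in~\cite[Sec.~4]{Zuk}, one can easily check\ldots'', and your sketch is precisely an articulation of that strategy---verify the basic relators $[a,a^{b^l}]$ hold, lift $\psi$ to the abstractly presented group via the endomorphisms $\xi,\theta$, and use contraction to run the length-reducing induction on $\ker\pi$. There is nothing to add.
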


\smallskip


\subsection{Virtually nilpotent quotients and maximal subgroups}
\label{sec:nilpotent+maximal}

In this final subsection we study nilpotency and virtual nilpotency of quotients of a $p$-Basilica group $G$, and we prove Theorem~\ref{thm:maximal} about maximal subgroups of $G$.
The following lemma will be useful for both purposes.

\begin{lem}
\label{lem:wreath quotient}
Let $G$ be a $p$-Basilica group, for a prime $p$.
Then $G$ has a proper quotient isomorphic to $W_p(\mathbb{Z})$.
\end{lem}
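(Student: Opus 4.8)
The plan is to construct an explicit surjective homomorphism $\bar\Phi\colon G\to W_p(\mathbb{Z})$ with nontrivial kernel; since then $G/\ker\bar\Phi\cong W_p(\mathbb{Z})$, this directly produces the desired proper quotient. Write $W_p(\mathbb{Z})=\mathbb{Z}^p\rtimes\langle t\rangle$, where $t$ has order $p$ and permutes the base factors cyclically, $z_i^{\,t}=z_{i+1}$ (indices modulo $p$), with $\langle z_i\rangle\cong\mathbb{Z}$. I would define $\bar\Phi$ on the generators by $a\mapsto z_1$ and $b\mapsto t$. Surjectivity is then immediate: the conjugates $z_1^{\,t^l}=z_{1+l}$ recover all of $\mathbb{Z}^p$, which together with $t$ generates $W_p(\mathbb{Z})$. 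Properness is equally immediate, since $\bar\Phi(b^p)=t^p=1$ while $b^p\ne 1$ because $b$ has infinite order by Theorem~\ref{thm:G/G'}(i); hence $\ker\bar\Phi\ne 1$.

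The substance of the argument is to verify that $\bar\Phi$ is well defined, for which I would invoke the $L$-presentation of Theorem~\ref{pr:11}. Extend the assignment $a\mapsto z_1$, $b\mapsto t$ to a homomorphism $\Phi\colon F_{\{a,b\}}\to W_p(\mathbb{Z})$; it then suffices to show that $\Phi$ kills every relator $\xi^k(\theta^m([a,a^{b^l}]))$. The key structural observation is that the base group $\mathbb{Z}^p$ is abelian and normal in $W_p(\mathbb{Z})$, and that $\Phi(w)$ lies in the base group precisely when the $b$-exponent sum of $w$ is divisible by $p$. Consequently, if the $b$-exponent sum of $\xi^k\theta^m(a)$ is always divisible by $p$, then both $\Phi(\xi^k\theta^m(a))$ and its conjugate $\Phi(\xi^k\theta^m(a^{b^l}))$ lie in the abelian normal subgroup $\mathbb{Z}^p$, so that, since the endomorphisms preserve commutators, the relator $[\,\Phi(\xi^k\theta^m(a)),\Phi(\xi^k\theta^m(a^{b^l}))\,]$ maps to the identity.

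To control this, let $\alpha$ and $\beta$ denote the $a$- and $b$-exponent sums, regarded as homomorphisms $F_{\{a,b\}}\to\mathbb{Z}$, and record their behaviour under the two endomorphisms. Since $\theta$ fixes $b$ and $\theta(a)=a^{b^p}a$ has zero $b$-exponent sum, one has $\beta(\theta(w))=\beta(w)$; since $\xi(b)=a$ contributes no $b$ while $\xi(a)=b^p$, one has $\beta(\xi(w))=p\,\alpha(w)$. Starting from $a$, where $\beta(a)=0$, this gives $\beta(\theta^m(a))=0$ for all $m$, and then $\beta(\xi^k\theta^m(a))\equiv 0\pmod p$ for all $k,m\ge 0$: the case $k=0$ is trivial, and for $k\ge 1$ the outermost application of $\xi$ forces the value into $p\mathbb{Z}$. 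This is exactly the divisibility required, so $\Phi$ annihilates every relator and descends to the homomorphism $\bar\Phi$ on $G$.

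The main obstacle is precisely this well-definedness check: one must ensure the map respects the infinitely many relators of the $L$-presentation, and the cleanest route is the exponent-sum bookkeeping above, which reduces the whole matter to the single congruence $\beta(\xi^k\theta^m(a))\equiv 0\pmod p$. Once that is in hand, surjectivity and properness are routine, and $G/\ker\bar\Phi\cong W_p(\mathbb{Z})$ is the required proper quotient.
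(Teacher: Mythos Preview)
Your proof is correct. The relator check via exponent sums is clean: since $\mathbb{Z}^p$ is normal and abelian in $W_p(\mathbb{Z})$, and since every $\xi^k\theta^m(a)$ has $b$-exponent sum divisible by $p$ (equal to zero when $k=0$, and in $p\mathbb{Z}$ whenever an outermost $\xi$ has been applied), both entries of each relator-commutator land in the base group and the commutator vanishes. Using normality of $\mathbb{Z}^p$ to handle the conjugate $\xi^k\theta^m(a^{b^l})$ is exactly right. Surjectivity and properness are indeed immediate.

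The paper, however, takes a different and more structural route. Rather than invoking the $L$-presentation, it works directly with the self-similar decomposition: setting $L=\psi^{-1}(G'\times\cdots\times G')$, it uses $\psi(A)=B\times\cdots\times B$ (Theorem~\ref{thm:weakly-branch}) and $B/G'\cong\mathbb{Z}$ (Theorem~\ref{thm:G/G'}) to identify $G/L$ with the relevant semidirect product, and then observes that $N=L\langle b^p\rangle$ is normal with $G/N\cong W_p(\mathbb{Z})$. That approach explicitly names the kernel and rests only on the already-established structure theorems, whereas yours buys a very concrete target homomorphism at the price of depending on Theorem~\ref{pr:11}, whose proof the paper only sketches by reference. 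Both are legitimate; the paper's argument is more self-contained within the theory developed earlier, while yours is the natural one once the presentation is taken as known.
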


\begin{proof}
Let $L=\psi^{-1}(G'\times \cdots \times G')$.
We have $G=A\langle b \rangle$, and on the other hand $\psi(A)=B\times \cdots \times B$ by
Theorem~\ref{thm:weakly-branch}(i).
Hence $\psi$ induces an isomorphism between $G/L$ and the semidirect product
$(B/G'\times \cdots \times B/G')\ltimes \langle \psi(b) \rangle$.
Observe that $\psi(b)=(1,\ldots,1,a)\sigma$ acts as $\sigma$ on the direct product of $p$ copies of
$B/G'$, and that $\psi(b^p)=(a,\ldots,a)$ acts trivially.
If we set $N=L\langle b^p \rangle$ then it is clear that $N\trianglelefteq G$ and that
$G/N\cong W_p(\mathbb{Z})$, since $B/G'\cong \mathbb{Z}$ by Theorem~\ref{thm:G/G'}(ii).
\end{proof}

Recall from Corollary~\ref{cor: Just non solvable} that the $p$-Basilica groups are just non-solvable. In~\cite[Sec.~8.3]{Francoeur} it was shown that the Basilica group is not just non-nilpotent.
On the other hand, by~\cite[Lem.~8.3.5 and Prop.~8.3.6]{Francoeur}, all proper quotients of the Basilica group are virtually nilpotent.
We extend these results to the $p$-Basilica groups for all primes~$p$.

\begin{thm}
\label{thm:nilpotency}
Let $G$ be a $p$-Basilica group, for a prime $p$.
Then:
\begin{enumerate}
\item
The group~$G$ is not just non-nilpotent.
\item
Every proper quotient of~$G$ is virtually nilpotent, but $G$ itself is not virtually nilpotent.
\end{enumerate}
\end{thm}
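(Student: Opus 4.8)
The plan is to handle the two statements separately, throughout exploiting that $G$ is weakly regular branch over $G'$ (Theorem~\ref{thm:weakly-branch}) and the explicit description of the derived level stabilisers in Theorem~\ref{thm:derived stabilisers}. For part~(i), I would first observe that $G$ is not nilpotent, indeed not even virtually nilpotent: it has exponential growth by Theorem~\ref{thm:exponentialgrowth}, whereas a finitely generated nilpotent group has polynomial growth. It then suffices to produce a \emph{non-nilpotent} proper quotient, and Lemma~\ref{lem:wreath quotient} hands us one, namely $W_p(\mathbb{Z})=\mathbb{Z}\wr C_p$. To see this quotient is not nilpotent, I would compute its lower central series directly: on the base group $\mathbb{Z}^p$ the top group acts by the cyclic shift $\sigma$, and over $\mathbb{Q}$ the operator $1-\sigma$ is invertible on the augmentation submodule (its eigenvalues are the nontrivial $p$-th roots of unity), so $\gamma_k(W_p(\mathbb{Z}))$ is a nontrivial finite-index subgroup of that submodule for every $k\ge 2$ and never becomes trivial. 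Since every proper quotient of a just non-nilpotent group is nilpotent, the existence of this quotient proves~(i).

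For part~(ii), that $G$ itself is not virtually nilpotent follows once more from its exponential growth together with Gromov's theorem. The real content is that every proper quotient $G/N$, with $1\neq N\trianglelefteq G$, is virtually nilpotent, and I would establish this in two steps. First I claim $N$ contains $\Rist_G(n)'$ for some $n\ge 2$. Choosing $1\neq g\in N$, note that $g$ moves some vertex and hence moves a vertex at every sufficiently deep level, so we may pick $n\ge 2$ and $u\in L_n$ with $u^g\neq u$. For $h,h'\in\Rist_G(u)$ the element $k=[h,g]=h^{-1}h^g\in N$ has its two nontrivial parts supported in the disjoint subtrees $T_u$ and $T_{u^g}$; since $h'$ commutes with the $T_{u^g}$-part, a short computation gives $[k,h']=[h^{-1},h']\in\Rist_G(u)'$, still inside $N$. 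Letting $h,h'$ range over $\Rist_G(u)$ yields $\Rist_G(u)'\le N$, and by level-transitivity and normality $N\supseteq\langle\Rist_G(u)'\rangle^G=\Rist_G(n)'=\St_G(n)''$, which is nontrivial because $\psi_n(\St_G(n)'')=G''\times\cdots\times G''$ with $G''\neq 1$.

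The second step is to show $G/\St_G(n)''$ is virtually nilpotent. I would first treat $G/G''$ directly. From the computation in the proof of Theorem~\ref{thm:semidirect}(iii) the generator $a$ centralises $G'$ modulo $G''$, so $a$ acts trivially on $G'/G''$; and since $\psi(b^p)=(a,\dots,a)$, conjugation by $b^p$ fixes the diagonal factors of $\psi(G')/\psi(G'')$ and maps $C$ into them, so $b^p$ acts unipotently on $G'/G''$. Hence the index-$p$ subgroup $H=\langle a,b^p\rangle G'$ has $H/G''$ abelian-by-abelian with unipotent action, therefore nilpotent, and $G/G''$ is virtually nilpotent. For general $n$, composing the embedding $\psi_n\colon\St_G(n)\hookrightarrow G\times\overset{p^n}{\cdots}\times G$ with the projection onto $(G/G'')^{p^n}$ has kernel exactly $\St_G(n)''$ by Theorem~\ref{thm:derived stabilisers}, so $\St_G(n)/\St_G(n)''$ embeds into the virtually nilpotent group $(G/G'')^{p^n}$ and is thus virtually nilpotent; since $G/\St_G(n)$ is finite, $G/\St_G(n)''$ is virtually nilpotent, and finally $G/N$ is a quotient of it.

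The main obstacle is the verification in the second step that $G/G''$ is virtually nilpotent. This is genuinely delicate because $G/G''$ is metabelian but \emph{not} nilpotent (it surjects onto the non-nilpotent $W_p(\mathbb{Z})$), and metabelian-by-finite groups need not be virtually nilpotent in general. What rescues the argument is the specific root-of-unity nature of the $\sigma$-action: precisely the fact that $b^p$ acts unipotently on $G'/G''$ — equivalently, that every eigenvalue of the $b$-action is a $p$-th root of unity — forces a finite-index nilpotent subgroup. Getting this unipotency cleanly from the semidirect decomposition $\psi(G')=(G'\times\cdots\times G')\rtimes C$ is the crux of the proof.
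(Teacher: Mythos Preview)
Your proof is correct, and for part~(i) it is essentially the paper's argument (you compute directly that $W_p(\mathbb{Z})$ is not nilpotent, whereas the paper cites Baumslag's classical result).

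For part~(ii) your route genuinely differs from the paper's in two places. First, to see that $G/G''$ is virtually nilpotent the paper does not analyse the action of $b^p$ on $G'/G''$: instead, since $\psi(G'')=\gamma_3(G)\times\cdots\times\gamma_3(G)$, the map $\psi$ embeds $G/G''$ into $W_p(G/\gamma_3(G))$, and as $G/\gamma_3(G)\cong H_3(\mathbb{Z})$ is nilpotent, this wreath product is nilpotent-by-$C_p$, hence virtually nilpotent. This is shorter and avoids the unipotency check you flag as the crux. Second, to pass from ``$G/G''$ is virtually nilpotent'' to ``every proper quotient is virtually nilpotent'', the paper simply invokes Francoeur's general criterion \cite[Thm.~4.10]{Francoeur-paper} for weakly regular branch groups, whereas you reprove that criterion in this case via the standard rigid-stabiliser trick and the identification $\Rist_G(n)'=\St_G(n)''$ from Theorem~\ref{thm:derived stabilisers}. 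Your approach is more self-contained and makes the mechanism transparent; the paper's is quicker and highlights that the only group-specific input needed is the virtual nilpotency of $G/G''$.
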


\begin{proof}
(i)
By Lemma~\ref{lem:wreath quotient}, the group $G$ has a proper quotient isomorphic to $W_p(\mathbb{Z})$.
By the main result in \cite{Baumslag}, this wreath product is not nilpotent.
Hence $G$ is not just non-nilpotent.

(ii)
From Theorem~\ref{thm:semidirect}(ii), the map $\psi$ induces an embedding of $G/G''$
into the wreath product $W_p(G/\gamma_3(G))$.
Since the latter is virtually nilpotent, also is $G/G''$.

Now since $G$ is weakly regular branch over $G'$ and $G/G''$ is virtually nilpotent, it follows that every proper quotient of $G$ is also virtually nilpotent by~\cite[Thm.~4.10]{Francoeur-paper}.
On the other hand, the group $G$ is not virtually nilpotent by Gromov's celebrated theorem~\cite{Gromov}, in light of Theorem~\ref{thm:exponentialgrowth}.
\end{proof}

Let us now consider the maximal subgroups of $G$.
We first prove that $G$ does not possess maximal subgroups of infinite index.
The proof  is analogous to that of~\cite[Sec.~4.4]{Francoeur-paper}, however with a necessary change to the end of~\cite[Prop.~4.27]{Francoeur-paper}. Due to the proof being so similar, we refer the reader to~\cite[Sec.~4.4]{Francoeur-paper}, and only record here the part that needs to be changed.

Recall that a subgroup $H$ of a group $\mathcal{G}$ is \emph{prodense} if $HN=\mathcal{G}$ for all non-trivial normal subgroups
$N$ of $\mathcal G$.
Since a maximal subgroup of infinite index is a proper prodense subgroup, it suffices to show that there are no proper prodense subgroups in a $p$-Basilica group~$G$. For $H$ a proper prodense subgroup of~$G$, by \cite[Lem.~3.1 and Thm.~3.2]{Francoeur-paper}, for all vertices $u\in T$, the subgroup $\psi_u(\St_H(u))$ is a proper prodense subgroup of~$G$. We consider a prodense subgroup~$H$ of~$G$, and seek a vertex $u$ such that $\psi_u(\St_H(u))=G$, which then proves the theorem. 

As in~\cite[Prop.~4.27]{Francoeur-paper}, there is a vertex $v$ such that either $ab,b^{-1}a\in \psi_v(\St_H(v))$ or $ba,b^{-1}a\in \psi_v(\St_H(v))$. 
In the former case, we obtain $a^2\in \psi_v(\St_H(v))$. Since $p$ is an odd prime, it follows that $b^{-1}a^p\in \psi_v(\St_H(v))$. Now
$\psi((b^{-1}a^p)^p)=(a^{-1}b^p,\ldots,a^{-1}b^p)$ and 
$\psi(a^{-1}b^p)=(a,\ldots,a,b^{-1}a)$.
Therefore, for $u=vx_1x_1$, we have either $a,ab\in \psi_u(\St_H(u))$ or $a,ba\in \psi_u(\St_H(u))$, and we are done.
In the latter case, we have $ba, b^{-1}a\in \psi_v(\St_H(v))$, and so $b^{2}\in \psi_v(\St_H(v))$. As before, we obtain $b^pa=\psi^{-1}((a,\ldots,a,ab))\in \psi_v(\St_H(v))$. Setting $u=vx_1$, we see that $a,ba\in \psi_u(\St_H(u))$ and the result follows.

We conclude by showing the existence of non-normal maximal subgroups in the $p$-Basilica groups.

\begin{pr}
Let $G$ be a $p$-Basilica group, for $p$ an odd prime.
Then for every prime $q$ such that $p$ divides $q-1$, the group~$G$ has a non-normal subgroup of index $q$.
\end{pr}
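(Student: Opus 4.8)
The plan is to realise the asserted subgroup as the preimage of a non-normal Sylow $p$-subgroup under a surjection of $G$ onto the nonabelian group of order $pq$. Since $p\mid q-1$, the group $\mathbb{F}_q^\times$ has an element $\omega$ of order $p$, and we may form $M=\langle s\rangle\rtimes\langle t\rangle$ with $s^q=t^p=1$ and $t^{-1}st=s^\omega$, which is nonabelian of order $pq$. The goal is to construct a surjective homomorphism $\varphi\colon G\twoheadrightarrow M$; then $H=\varphi^{-1}(\langle t\rangle)$ will be the required subgroup.

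To produce $\varphi$, I would factor it through the proper quotient $G\twoheadrightarrow W_p(\mathbb{Z})$ furnished by Lemma~\ref{lem:wreath quotient}. Write $W_p(\mathbb{Z})=\langle x_1,\ldots,x_p\rangle\rtimes\langle t\rangle$, where the base $\langle x_1,\ldots,x_p\rangle\cong\mathbb{Z}^p$ and $t^{-1}x_it=x_{i+1}$ with indices read modulo $p$; under the quotient map the generator $a$ lands in the base (as $\psi(a)=(1,\ldots,1,b)$ it hits a single free factor, say $x_p$) and $b$ maps to the top generator $t$. Now define $\rho\colon W_p(\mathbb{Z})\to M$ on generators by $\rho(x_i)=s^{\omega^{i}}$ and $\rho(t)=t$. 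This respects the defining relations, since the $\rho(x_i)$ are commuting elements of the cyclic group $\langle s\rangle$ and $\rho(t^{-1}x_it)=t^{-1}s^{\omega^{i}}t=s^{\omega^{i+1}}=\rho(x_{i+1})$; moreover $\rho(x_1)=s^{\omega}$ generates $\langle s\rangle$ since $\gcd(\omega,q)=1$, so $\rho$ is surjective. Composing with the quotient map gives the desired surjection $\varphi\colon G\twoheadrightarrow M$ (concretely $\varphi(a)=s$ and $\varphi(b)=t$, the point being that routing through $W_p(\mathbb{Z})$ makes well-definedness automatic).

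It remains to check that $H=\varphi^{-1}(\langle t\rangle)$ is non-normal of index $q$. The index is $[M:\langle t\rangle]=q$. For non-normality, a direct computation gives $sts^{-1}=s^{1-\omega^{-1}}t\notin\langle t\rangle$, as $\omega\neq1$; hence $\langle t\rangle$ is not normal in $M$. Since $\ker\varphi\le H$, the correspondence theorem shows that $H\trianglelefteq G$ would force $\langle t\rangle\trianglelefteq M$, so $H$ is non-normal. I expect the main obstacle to be pinning down the equivariance of the base map, i.e.\ verifying that $\rho(x_i)=s^{\omega^i}$ is compatible with the cyclic shift action of $t$ so that $\rho$ is a genuine homomorphism; the index count and the non-normality are then purely formal. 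Together with the fact that a subgroup of prime index is maximal and with Dirichlet's theorem, which supplies infinitely many primes $q\equiv1\pmod p$, this completes the proof of Theorem~\ref{thm:maximal}.
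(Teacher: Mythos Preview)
Your argument is correct and rests on the same two ingredients as the paper's proof: the quotient $G\twoheadrightarrow W_p(\mathbb{Z})$ from Lemma~\ref{lem:wreath quotient}, and a nontrivial $p$th root of unity $\omega\in\mathbb{F}_q^\times$. The packaging differs slightly. The paper passes to $W_p(\mathbb{Z}/q\mathbb{Z})$, diagonalises the cyclic $\sigma$-action on the base $V\cong\mathbb{F}_q^{\,p}$, picks an eigenline $U$ for an eigenvalue $\lambda\ne 1$, and takes $H=K\langle\sigma\rangle$ where $V=U\times K$; non-normality is seen from $[u,\sigma]=u^{\lambda-1}\ne 1$. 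Your map $\rho\colon W_p(\mathbb{Z})\to M$, $x_i\mapsto s^{\omega^i}$, $t\mapsto t$, is precisely the projection onto that eigenline (with $\lambda=\omega$) extended over the top group, so your $H=\varphi^{-1}(\langle t\rangle)$ is the same subgroup viewed through the further quotient $W_p(\mathbb{Z}/q\mathbb{Z})\twoheadrightarrow M$. Your route has the virtue of naming the target explicitly as the nonabelian group of order $pq$, which makes the index and non-normality of $\langle t\rangle$ immediate; the paper's route makes the linear-algebraic origin of the construction more transparent. One small caveat: under the isomorphism of Lemma~\ref{lem:wreath quotient} the image of $b$ is an order-$p$ element of the form $(v,\sigma)$ with $\sum v_i=0$, not literally $(0,\sigma)$, so your parenthetical ``concretely $\varphi(b)=t$'' need not hold on the nose---but any such element is conjugate to $t$ in $W_p(\mathbb{Z})$, so this does not affect surjectivity or the conclusion.
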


\begin{proof}
By Lemma~\ref{lem:wreath quotient}, the group~$G$ has a quotient isomorphic to $W_p(\ZZ)$, and so also a quotient isomorphic to $W_p(\ZZ/q\ZZ)$.
Thus it suffices to find a non-normal subgroup of index $q$ in the latter group.

Let $V=\ZZ/q\ZZ\times \cdots \times \ZZ/q\ZZ$ be the base group of $W_p(\ZZ/q\ZZ)$.
The characteristic polynomial corresponding to the action of $\sigma$ on $V$ is $X^p-1$, which by the condition that $p$ divides $q-1$, has $p$ different roots in $\ZZ/q\ZZ$.
Let $\lambda\ne 1$ be one of these roots, and let $U=\langle u \rangle$ be the eigenspace of
$\lambda$ in $V$.
Then we can write $V=U\times K$ for a suitable subgroup $K$.
If we set $H=K\langle \sigma \rangle$ then $H$ has index $q$ in $W_p(\ZZ/q\ZZ)$.
At the same time, $H$ is not a normal subgroup of $W_p(\ZZ/q\ZZ)$, since otherwise
$[u,\sigma]=u^{\lambda-1}\ne 1$ belongs to $U\cap H=1$.
\end{proof}

Observe that there are actually infinitely many non-normal maximal subgroups in a $p$-Basilica group, due to Dirichlet's theorem about primes in arithmetic progressions.




\begin{thebibliography}{99}


\bibitem{Abercrombie} A.\,G.~Abercrombie, Subgroups and subrings of profinite rings,
\textit{Math.\ Proc.\ Camb.\ Phil.\ Soc.} \textbf{116 (2)} (1994), 209--222.

\bibitem{BaSh97} Y.~Barnea and A.~Shalev, Hausdorff dimension, pro-$p$ groups, and Kac-Moody algebras, \textit{Trans.\ Amer.\ Math.\ Soc.} \textbf{349} (1997), 5073--5091.

\bibitem{endomorphic} L.~Bartholdi, Endomorphic presentations of branch groups, \textit{J.\ Algebra} \textbf{268 (2)} (2003), 419--443.

\bibitem{Bartholdi}  L.~Bartholdi, Branch rings, thinned rings, tree enveloping rings, \textit{Israel J.\ Math.} \textbf{154} (2006), 93--139. 

\bibitem{BG99} L.~Bartholdi and R.\,I.~Grigorchuk, On parabolic subgroups and Hecke algebras of some fractal groups, \textit{Serdica Math.\ J.} \textbf{28 (1)} (2002) 47--90.


\bibitem{Handbook} L.~Bartholdi, R.\,I.~Grigorchuk and Z.~\u{S}uni\'{c}, \textit{Handbook of algebra} \textbf{3}, North-Holland, Amsterdam, 2003.


\bibitem{BKN} L.~Bartholdi, V.~Kaimanovich and V.~Nekrashevych, On amenability of automata
groups, \textit{Duke Math.\ J.} \textbf{154} (2010), 575--598.

\bibitem{BS} L.~Bartholdi and O.~Siegenthaler, The twisted twin of Grigorchuk group, \textit{Internat.\ J.\ Algebra Comput.} \textbf{20} (2010), 465--488.

\bibitem{BV05} L.~Bartholdi and B.~Vir\'{a}g, Amenability via random walks, \textit{Duke Math.\ J.} \textbf{130 (1)} (2005), 39--56.

\bibitem{Baumslag} G.~Baumslag, Wreath products and $p$-groups, \textit{Math.\ Proc.\ Camb.\ Phil.\ Soc.} \textbf{55} (1959), 224--231.

\bibitem{BSV} A.\,M.~Brunner, S.~Sidki and A.\,C.~Vieira, A just non-solvable torsion free group defined on the binary tree, \textit{J.\ Algebra} \textbf{211} (1999), 99--144.

\bibitem{FAGT} G.\! A.~Fern\'{a}ndez-Alcober, \c{S}.~G\"{u}l and A.~Thillaisundaram, 
The congruence quotients of branch path groups, in preparation.

\bibitem{FAZR} G.\,A. Fern\'{a}ndez-Alcober and A. Zugadi-Reizabal, GGS-groups: order of congruence quotients and Hausdorff dimension, \textit{Trans.\ Amer.\ Math.\ Soc.} \textbf{366} (2014), 1993--2007.

\bibitem{Francoeur} D.~Francoeur, \textit{On maximal subgroups and other aspects of branch groups}, PhD thesis, University of Geneva, 2019.

\bibitem{Francoeur-paper} D.~Francoeur, On maximal subgroups of infinite index in branch and weakly branch groups, \textit{J.\ Algebra} \textbf{560} (2020), 818--851.

\bibitem{FT} D.~Francoeur and A.~Thillaisundaram, Maximal subgroups of non-torsion GGS-groups, arXiv preprint:2005.02346.

\bibitem{pcongruence} A.~Garrido and J.~Uria-Albizuri, Pro-$\mathcal{C}$ congruence properties for groups of rooted tree automorphisms, \textit{Arch.\ Math.\ (Basel)} \textbf{112 (2)} (2019), 123--137.

\bibitem{NewHorizons} R.\,I.~Grigorchuk, Just infinite branch groups, in \textit{New horizons in pro-$p$ groups}, Birkh\"{a}user, Boston, 2000. 

\bibitem{Gri11} R.\,I.~Grigorchuk, Some topics in the dynamics of group actions on rooted trees, 
\textit{Proc.\ Steklov Inst.\ Math.} \textbf{273 (1)} (2011), 64--175.

\bibitem{Zuk} R.\,I.~Grigorchuk and A.~Zuk, On a torsion-free weakly branch group defined by a three state automaton, \textit{Internat.\ J.\ Algebra Comput.} \textbf{12} (2002), 223--245.

\bibitem{Gromov} M.~Gromov, Groups of polynomial growth and expanding maps, \textit{Publ.\ Math.\ Inst.\ Hautes \'{E}tudes Sci.} \textbf{53} (1981), 53--73.

\bibitem{Juschenko} K.~Juschenko, Non-elementary amenable subgroups of automata groups,  \textit{J.\ Topol.\ Anal.} \textbf{10 (1)} (2018), 35--45.

\bibitem{LN} Y.~Lavreniuk and V.~Nekrashevych, Rigidity of branch groups acting on rooted trees, \textit{Geom.\ Dedicata} \textbf{89} (2002), 159--179.

\bibitem{Lysionok} I.\,G.~Lysionok, A system of defining relations for the Grigorchuk group, \textit{Mat.\ Zametki} \textbf{38 (4)} (1985), 503--516.



\bibitem{Nekrashevych} V.~Nekrashevych, \textit{Self-similar groups}, Math. Surv. Monogr. \textbf{117}, Amer. Math. Soc., Providence, RI, 2005.

\bibitem{MarAn} M.~Noce and A.~Thillaisundaram, Hausdorff dimension of the second Grigorchuk group, \textit{Inter.\ J.\ Algebra Comput.}, doi:10.1142/S0218196721400038, in press.

\bibitem{PR} J.\,M.~Petschick and K.~Rajeev, On the Basilica operation, arXiv preprint:2103.05452. 

\bibitem{Sasse} H.~Sasse, \textit{Basilica-Gruppen und ihre Wirkung auf $p$-regul\"{a}ren
B\"{a}umen}, Master thesis, Heinrich-Heine-Universit\"{a}t D\"{u}sseldorf, 2018.

\bibitem{Segal} D.~Segal, The finite images of finitely generated groups, \textit{Proc.\ London Math.\ Soc.} \textbf{82 (3)} (2011), 597--613.  

\bibitem{SidkiSilva} S.~Sidki and E.\,F.~Silva, A family of just-nonsolvable torsion-free groups defined on $n$-ary trees, in: Atas da XVI Escola de Algebra, Brasilia, \textit{Mat.\ Contemp.} \textbf{21} (2001), 255--274.

\bibitem{HanSki} R.~Skipper, The congruence subgroup problem for a family of branch groups, \textit{Internat.\ J.\ Algebra Comput.} \textbf{30 (2)} (2020), 397--418.

 \bibitem{TUA} A. Thillaisundaram and J. Uria-Albizuri, The profinite completion of multi-EGS groups, \textit{J.\ Group Theory} \textbf{24} (2021), 321--357.

\bibitem{Sunic} Z.~\u{S}uni\'{c}, Hausdorff dimension in a family of self-similar groups, \textit{Geom.\ Dedicata} \textbf{124} (2007), 213--236. 

\bibitem{Jone} J.~Uria-Albizuri, On the concept of fractality for groups of automorphisms of a regular rooted tree, \textit{Reports@SCM} \textbf{2} (2016), 33--44.
 
\end{thebibliography}
\end{document}